\documentclass[11pt,leqno]{amsart}
\usepackage{amsthm,amsfonts,amssymb,amsmath,oldgerm,upgreek}
\numberwithin{equation}{section}
\usepackage{fullpage}
\usepackage{dsfont}
\usepackage{amsmath}
\usepackage{amsfonts}
\usepackage{amssymb}
\usepackage{setspace}
\usepackage{mathrsfs}
\usepackage{fancyhdr}
\usepackage{graphicx}
\usepackage[colorlinks = true, linkcolor=blue,
filecolor=darkgreen,
urlcolor=blue,
citecolor=red,pdftex]{hyperref}
\usepackage{psfrag}
\usepackage{listings}
\usepackage{bbm}

\usepackage[top=30mm,bottom=30mm,left=25mm,right=25mm,a4paper]{geometry}

\def\eps{\varepsilon }

\newcommand{\Id}{{\rm Id}}

\newcommand\br{\begin{remark}}
\newcommand\er{\end{remark}}
\newcommand\bp{\begin{pmatrix}}
\newcommand\ep{\end{pmatrix}}
\newcommand{\be}{\begin{equation}}
\newcommand{\ee}{\end{equation}}
\newcommand{\ba}[1]{\begin{array}{#1}}
\newcommand{\ea}{\end{array}}

\newcommand{\beg}{\begin{example}}
\newcommand{\eeg}{\end{exaplem}}
\newcommand{\bpr}{\begin{proposition}}
\newcommand{\epr}{\end{proposition}}
\newcommand{\bt}{\begin{theorem}}
\newcommand{\et}{\end{theorem}}
\newcommand{\bc}{\begin{corollary}}
\newcommand{\ec}{\end{corollary}}
\newcommand{\bl}{\begin{lemma}}
\newcommand{\el}{\end{lemma}}
\newcommand{\bd}{\begin{definition}}
\newcommand{\ed}{\end{definition}}
\newcommand{\brs}{\begin{remarks}}
\newcommand{\ers}{\end{remarks}}

\newtheorem{theorem}{Theorem}[section]
\newtheorem{proposition}[theorem]{Proposition}
\newtheorem{corollary}[theorem]{Corollary}
\newtheorem{lemma}[theorem]{Lemma}
\theoremstyle{remark}
\newtheorem{remark}[theorem]{Remark}
\theoremstyle{definition}
\newtheorem{definition}[theorem]{Definition}
\newtheorem{assumption}[theorem]{Assumption}

\newtheorem{example}[theorem]{Example}

\def\eps {\varepsilon}

\def\part{\partial}

\newcommand{\CC}{{\mathbb C}}

\newcommand{\MM}{{\mathbb M}}
\newcommand{\NN}{{\mathbb N}}

\newcommand{\PP}{{\mathbb P}}
\newcommand{\QQ}{{\mathbb Q}}
\newcommand{\RR}{{\mathbb R}}
\newcommand{\TT}{{\mathbb T}}

\newcommand{\ZZ}{{\mathbb Z}}

\newcommand\cA{{\mathcal A}}
\newcommand\cB{{\mathcal B}}

\newcommand\cF{{\mathcal F}}
\newcommand\cG{{\mathcal G}}

\newcommand\cI{{\mathcal I}}

\newcommand\cK{{\mathcal K}}
\newcommand\cL{{\mathcal L}}

\newcommand\cO{{\mathcal O}}

\newcommand\cQ{{\mathcal Q}}
\newcommand\cR{{\mathcal R}}

\newcommand\cT{{\mathcal T}}
\newcommand\cU{{\mathcal U}}
\newcommand\cV{{\mathcal V}}
\newcommand\cW{{\mathcal W}}

\newcommand\cY{{\mathcal Y}}

\newcommand{\myunderbar}[1]{\underline{#1\mkern-4mu}\mkern4mu }

\newcommand\rwidehat[1]{
  \savestack{\tmpbox}{\stretchto{
      \scaleto{
        \scalerel*[\widthof{\ensuremath{#1}}]{\kern.1pt\mathchar"0362\kern.1pt}
                 {\rule{0ex}{\textheight}}
                }{\textheight} 
    }{2.3ex}}
  \ensurestackMath{\stackon[-6.9pt]{#1}{\tmpbox}}
}\parskip 1ex

\newcommand{\mypar}{{\mkern3mu\vphantom{\perp}\vrule depth 0pt\mkern2mu\vrule depth 0pt\mkern3mu}}

\newcommand{\PG}{\mathscr{P}}

\usepackage{ulem}
\usepackage{cancel}

\title[The Fast Limit Model of the 
Euler--Maxwell--Two--Fluid system]{
 The Fast Limit Model associated with\\
the Euler--Maxwell--Two--Fluid system
}
\author[N. Besse]{Nicolas Besse}
\address[Nicolas Besse]{Observatoire de la C\^ote d'Azur,
               Bd de l'Observatoire CS 34229,
               06304 Nice Cedex 4, France}
\author[C. Cheverry]{Christophe Cheverry}
\address[Christophe Cheverry]{Institut Math\'ematique de Rennes,
Campus de Beaulieu, 263 avenue du G\'en\'eral Leclerc CS 74205
35042 Rennes Cedex\\FRANCE}

\begin{document}

\begin{abstract} The filtering method \cite{Sch05} applied at the level of the Euler--Maxwell--Two--Fluid system produces a Fast Limit Model (FLM) which captures up to the electron depth  essential features of plasma dynamics. In the case of prepared data, as recently proved in \cite{BC25p}, the discussion reduces to the eXtended MagnetoHydroDynamic (XMHD) framework of physicists \cite{AKY15,DML16,LMM15}, which involves the density $ \varrho $, the velocity $ u $ and the magnetic field $ B $ as state variables. By contrast, for unprepared data, an electric field $ E $ is created by  resonances, and it participates to the time evolution. It turns out that FLM is a well-posed system on $ (\varrho,u,E,B) $, extending XMHD, and implying a mechanism of interactions between $ (\varrho,u,B) $ and $ E $ which can convert a part of the energy carried by $ (\varrho,u,B) $ into electric energy.
\end{abstract}

\maketitle

{\small \parskip=1pt
\setcounter{tocdepth}{1}
\tableofcontents
}


\section{Introduction}
The three-dimensional Euler--Maxwell--Two--Fluid system, EMTF 
in abbreviated form, is a fundamental description in plasma physics that treats electrons and ions as two interpenetrating interacting fluids. EMTF falls within the scope of quasilinear symmetric systems, and therefore it is locally well-posed. From there, a lot of efforts have been done to investigate the long time existence, mainly with the help of dissipation or relaxation terms  \cite{DLZ12, ISYG18, XXK13}, and lately under smallness and irrotational conditions \cite{GIP16}. Without such assumptions, but  starting from perturbations (of small size $ \eps $ with $ 0 < \eps \ll 1$) around a constant state, an effective approach to the long time analysis of the fourth state of matter is provided by the filtering theory \cite{Sch05}. As noted in \cite{BC25p}, the two-fluid low Mach number limit of EMTF leads to a {\it Fast Limit Model} (that will be called {\it FL Model} or just {\it FLM}). The aim of the present paper is to establish preliminary results about the Fast Limit Model related to EMTF.

The FL Model can be deduced from the asymptotic analysis (under spatial periodic boundary conditions) of the forthcoming equation  \eqref{TFEM_cU}, where the penalized term $ \cL $ is a linear (partial differential) operator given by \eqref{DefL}. In doing so, some insights have been previously obtained by working under very restrictive assumptions, see for instance \cite{LPX15,LPW20,PL24,PXZ25} and the references therein. These preliminary steps bypass a number of crucial two-fluid effects and the significant impacts of the numerous interactions occurring between the rapid oscillations (of the type $ e^{{\rm i} \lambda t/\eps} $ with $ \lambda \in \RR^* $) generated by $ \cL $. 

By contrast, the filtering approach allows to progress in full generality, leading to a well-posed  Fast Limit Model.
Note that such FL Models have been thoroughly investigated in the context of incompressible limits of Euler (or Navier--Stokes)  systems \cite{DG99,Gal98,Gal03,LM98,Mas01,MS00,MS01,MS03,Sch94, Sch05,Sch07}. They have also been considered  for ideal MHD equations \cite{BC26,JJLX14,JSX19}. But, despite the key importance of EMTF in plasma physics, the FL Model associated with EMTF does not seem to have been considered before\footnote{In the present case, FLM takes the form of \eqref{modueffective} together with \eqref{expresscAj}, \eqref{expresscF}, \eqref{expresscA} and \eqref{characP}.}. This gap is probably due, among other things, to the technical complexity of this issue.

In the case of prepared data, which are special solutions to \eqref{TFEM_cU} satisfying $ \cL \,  \cU_\varepsilon = \cO(\eps) $, FLM reduces to 
the {\it Slow Limit Model} (that will be called {\it SL Model} or just {\it SLM}).  As proved in \cite{BC25p}, SLM\footnote{By construction, SLM  is made of \eqref{modu0effective} together with \eqref{characPe}.} is equivalent to incompressible eXtended MagnetoHydroDynamics (XMHD), a system of PDEs which can be written in terms of the state variables $ (\varrho,u,B) $. As stated in \cite{BC25}, 
the Cauchy problem for  compressible and incompressible XMHD is 
locally well-posed in Sobolev spaces. 
The XMHD framework was  first introduced in 1959 by L\"ust \cite{Lus59} via heuristic considerations. It was then developped to resolve issues arising from small-scale plasma physics (at electron and ion skin depths), in connection with the challenges of magnetic and inertial fusion. It is today intensively studied by physicists. It is investigated  through Hamiltonian methods \cite{AKY15,Bur17,DML16,KLMWW14,KM14,LMM15,LMM16},  through numerical simulations \cite{AGMDG14, AMDG14,ADG16,GTAM17,SM11}, and by means of  spectral or equilibria stability considerations \cite{AL24,ALM16,AY16,KTM20,LMM17,SMA24,Zhu17}. 
XMHD  is now recognized by the experts in the field as a more accurate alternative to standard MHD. From a mathematical perspective, the interest of \cite{BC25p} is to lay the foundation of XMHD on the basis of the first principles of two--fluid conducting fluids. In a word, XMHD is physically consistent and precise (it is able to grasp Hall and electron inertial effects) and mathematically canonical. 

At the same time, an outcome of the filtering method \cite{BC25p,Sch05} is to reveal 
the existence of a system encompassing XMHD, namely FLM. The FL Model is  more general in scope because it provides a record of the (long time impacts of the) interactions between fast waves. Theoretically, we find a two-level system, with one (XMHD) embedded within the other (FLM).
In practice, however, the distinction between the prepared und unprepared situations is not straightforward. On the one hand, under certain 
conditions\footnote{This may occur for example under non-periodic boundary conditions, in the presence of dispersion, or when some randomness is included.}, the fast singular limit of EMTF can reduce to SLM (and the use of FLM is not required). On the other hand, 
even in the periodic case, resonance phenomena are not guaranteed to manifest. Our primary objective is to prove that this latter point does not hold; in fact, the opposite is true.

\begin{theorem}\label{thmain}[A meaningful distinction between SLM and FLM] FLM can differ from SLM. In particular, FLM involves an electric field $ E $ created by resonances, while SLM lacks it.
\end{theorem}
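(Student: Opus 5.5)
The strategy is to exhibit an explicit resonant configuration of unprepared initial data and check two things. First, the Fast Limit Model \eqref{modueffective}, with the averaged nonlinearity built from \eqref{expresscAj}, \eqref{expresscF}, \eqref{expresscA} and the projectors \eqref{characP}, produces a nonzero electric field component in finite time. Second, the Slow Limit Model \eqref{modu0effective}, restricted to $\ker \cL$ through \eqref{characPe}, keeps $E\equiv 0$ by construction. The second point is structural. Elements of $\ker\cL$ carry no electric component, since in the linearized EMTF system a constant-state perturbation with $\cL\,\cU=0$ forces $E=0$ through Ampère–Faraday and the charge balance. Hence SLM, equivalent to XMHD by \cite{BC25p}, is a system on $(\varrho,u,B)$ alone. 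So the content of Theorem \ref{thmain} is the first point.

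\textbf{Step 1 (spectral data).} Work on the torus and Fourier-decompose $\cL$ from \eqref{DefL}. For each wave vector $k$, compute the eigenvalues $\lambda_j(k)$, which are the Langmuir/plasma, ion-acoustic, and electromagnetic branches, together with the associated eigenprojectors $\Pi_j(k)$. Keep track of which eigenvectors have a nonzero $E$-component; this holds for every branch with $\lambda_j(k)\neq 0$.

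\textbf{Step 2 (resonance).} The FLM nonlinearity is the time-average of $e^{-s\cL}Q(e^{s\cL}\cdot,e^{s\cL}\cdot)$. It retains exactly the quadratic interactions $(k_1,j_1),(k_2,j_2)\to(k_3,j_3)$ with $k_1+k_2=k_3$ and $\lambda_{j_1}(k_1)+\lambda_{j_2}(k_2)=\lambda_{j_3}(k_3)$. The simplest source of $E$ is a slow mode (with $\lambda=0$, in $\ker\cL$) interacting with a fast mode. This gives $k_1+k_2=k_3$ with $\lambda_{j_2}(k_2)=\lambda_{j_3}(k_3)\neq 0$. I would pick $k_2$ and $k_3$ of equal modulus on a branch whose dispersion relation depends only on $|k|$ (an isotropic branch, e.g.\ electron Langmuir), so the resonance is automatic. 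If isotropy fails because of the background magnetic field, I would choose $k_3$ to be the reflection of $k_2$ across the plane orthogonal to $B_0$, which preserves $\lambda$.

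\textbf{Step 3 (nonvanishing coefficient).} Take data $\cU_0=a\,r_1(k_1)e^{ik_1\cdot x}+b\,r_2(k_2)e^{ik_2\cdot x}+\text{c.c.}$, with $r_1\in\ker\cL$ and $r_2$ on the fast branch. Then $\partial_t \widehat{\cU}(k_3)|_{t=0}=ab\,\Pi_{j_3}(k_3)Q(r_1,r_2)$. Because $r_3(k_3)$ has a nonzero $E$-component, the $E$-part of the solution of \eqref{modueffective} is $t\,ab\,c\,r_3^E+\cO(t^2)$, which is nonzero for small $t>0$. This uses the local well-posedness of FLM to justify the expansion. The SLM solution with the same slow part $\Pi_0\cU_0$ keeps $E=0$, so the two models differ.

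\textbf{Main obstacle.} The hard step is verifying $c=\langle \ell_3(k_3),Q(r_1(k_1),r_2(k_2))\rangle\neq 0$, where $\ell_3$ is the left eigenvector. Symmetry of the EMTF quadratic terms could a priori annihilate it. I would first try the simplest slow mode, a pure density or velocity shear with $u\perp k_1$, against a longitudinal Langmuir wave, so that the transport term $(u\cdot\nabla)u_e$ gives an explicit nonzero scalar $\propto (u_1\cdot k_2)$. Nonvanishing is then visible by choosing $u_1\cdot k_2\neq 0$. If this coefficient vanishes identically, I would switch to a fast–fast interaction that produces a fast output.
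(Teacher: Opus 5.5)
\textbf{Verdict: genuine gap.} The framework of your proposal is sound, but the step that carries the theorem is left unproved, and the heuristic you offer for it does not settle it.

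\textbf{What is correct.} ${\rm Im}\,\PP_e=\ker\cL\cap\ker\cG$ has zero electric component. (This fails for $\ker\cL$ alone, since $\mathfrak g^1_k\in\ker\cL_k$ carries $E\neq0$.) The eigenvalues depend only on $|k|$, and there is no background magnetic field, so no reflection trick is needed. Hence slow$(k_1)$ $+$ fast$(k_2)$ $\to$ fast$(k_3)$ is resonant whenever $|k_3|=|k_2|$. Finally, the Langmuir eigenvector $r^r_{\mypar k}$ does have a nonzero electric component, proportional to $\Lambda^r-\underline a_e^2|k|^2>0$. This is not true of every branch: $r^l_{\mypar k}$ has $E=0$ when $\underline a_e=\underline a_i$, see \eqref{eqn:newrpl}.

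\textbf{The gap.} You never show that $\langle r_3,Q(r_1,r_2)\rangle\neq0$, which is the step you yourself flag. This is the whole content of the statement, because the paper shows that most resonant interactions here are transparent (Lemmas \ref{transpa0}, \ref{transpa3}, \ref{transpa0Q}--\ref{transpa4Q}). Your heuristic only sees the advection of $u_e$. For your triad, three contributions of the same order enter:
\begin{itemize}
\item the advection of the whole fluid part of $r_2$ (densities and velocities) by the slow velocity $V$;
\item the term $(u^{(2)}\cdot\nabla)V$;
\item the convective current $\varrho^{(2)}u^{(1)}$ contained in $\cF_E$.
\end{itemize}
These partially cancel. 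With the standard transport term $(v\cdot\nabla)v$, the identity $\langle r,Jr\rangle=0$ holds for longitudinal eigenvectors, i.e. equipartition $|u|^2=|\varrho|^2+|E|^2$. It reduces the coupling to a multiple of $(k_2\cdot V)\,(\hat k_2\cdot\hat k_3)$, where $V$ is the common velocity of both species in $r_1$.

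This multiple vanishes whenever $k_2\perp k_3$. It also vanishes for every admissible choice with $|k_2|=|k_3|=1$, which are the first lattice choices one would try. So your route can be completed, e.g. with $k_2=(1,1,0)$, $k_3=(1,0,1)$, $V\parallel e^1$, provided the three branches are distinct at $|k|^2=2$. But it can only be completed by this explicit computation. The fallback ``switch to a fast--fast interaction'' is not an argument.

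Your ``pure density'' option is also unavailable. For $k_1\neq0$, the basis $r^{0,1}_{k_1},\dots,r^{0,4}_{k_1}$ of $\ker\cL_{k_1}\cap\ker\cG_{k_1}$ has zero densities. Using a density slow mode therefore forces $k_1=0$ and $k_3=k_2$, and then $E$ is no longer created in an initially empty mode.

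\textbf{How the paper closes this.} It works only with the spatial mean. There the quasilinear part vanishes identically, $\cF_1=\cF_3=0$, and $\cF_4=0$ generically. What survives is $\cF_2$: the resonance at $k=0$ between the mean slow state (notably the mean density, through $c_0$) and the homogeneous plasma oscillation of frequency $\lambda^1_0=\underline b$. Its explicit computation yields \eqref{electriccoupling}, with $\beta$ proportional to $c_0\,(\underline b_e/\sqrt{m_e}+\underline b_i/\sqrt{m_i})$. In effect this is a density-induced shift of the plasma frequency. Spatially homogeneous data with $E_{00}=0$, $\cW_{00}\neq0$ and $c_0\neq0$ then give $\partial_tE_0\neq0$. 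Nonvanishing is read off directly, and no nonzero Fourier mode is involved. Your mechanism, the scattering of a Langmuir wave by a shear flow, is a genuinely different and spatially inhomogeneous one. It would become a valid alternative proof once the coupling coefficient is actually computed and shown nonzero for a specific choice of $(k_1,k_2,k_3,V)$.
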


Retain that FLM is a non-trivial extension of SLM. The fact remains that FLM is the most comprehensive  model, even if it is a much more complex system to map out. Another outcome, which stems from the above  separation of FLM from SLM, is that reserves of fluid and magnetic energy can accelerate or decelerate the charged particles. This is because the fast oscillating waves, which are generated at the level of \eqref{TFEM_cU}, are organized in such a way to produce an {\it electric field} $ E $, which is completely absent from SLM\footnote{Recall that an electric field $ E_X $ is hidden in the XMHD construction where it can be deduced from $ (\varrho,u,B) $ through the generalized Ohm's law, see Appendix of \cite{BC25p}. However, contrary to $ E $, the electric field $ E_X $ of XMHD is not created by the resonant interaction of fast waves. It does not interfere (directly) with the XMHD time evolution, and it is handled in XMHD as a corrector.}. This furnishes a new  mechanism by which some power stored inside $ (\varrho,u,B) $ can be converted into electric energy (related to $ E $). In this process, the defect of neutrality (the plasma is  quasineutral) and the two-fluids aspects play a dominant role. 

To study FLM, it might be tempting to apply Schochet's method \cite{Sch94,Sch05,Sch07}, see especially \cite{Sch07} for guidelines, which consists of decomposing the solution,  say $U (\tau,t,x)$, of FLM into a slow part $U_{\rm slow}(t,x) $ and a fast oscillating part $U_{\rm fast}(\tau,t,x) $, where $ \tau= t/\eps $. The relevance of this method for Euler (or Navier--Stokes) equations lies in two points. The first one is that the slow part $U_{\rm slow}$ satisfies some SL Model\footnote{This is related  to the fact that compressible Euler equations involve only fast longitudinal (acoustic) waves.}, and therefore it is not coupled to $U_{\rm fast}$. The second point is that the time-averaging operator
$ \MM_\tau $ given by \eqref{time-averaging}
 can somehow be eliminated from the FL Model \eqref{modu0} thanks to the introduction of a scalar potential (which satisfies a wave equation in the fast time variable $ \tau  $ and from which the fast oscillating part $U_{\rm fast}$ can be recovered).
    Nevertheless, for EMTF, due to the presence of fast transverse electromagnetic waves\footnote{The inventory of transverse and longitudinal waves can be found respectively in Paragraphs \ref{para1} and \ref{para2}.}, the preceding decoupling between $U_{\rm slow}$ and $U_{\rm fast}$ does not occur. Furthermore, the elimination of the time-averaging operator is far from being  straightforward\footnote{It would involve four scalar potentials, $\phi_{\perp 1}$, $\phi_{\perp 2}$, $\phi_{\mypar}^l$ and $\phi_{\mypar}^r$  respectively associated with the eigenvectors  $r_{\perp 1}$, $r_{\perp 2}$, $r_{\mypar}^l$ and $r_{\mypar}^r$ (see Section~\ref{subsubsec:compzeknot0}). The potentials  $\phi_{\perp 1}$ and $\phi_{\perp 2}$ (resp. $\phi_{\mypar}^l$ and $\phi_{\mypar}^r$) satisfy (resp.  perturbed by a second-order linear differential operator) Klein--Gordon equations whose symbols have a spatial part given by the square of the eigenvalue $\lambda_{\perp k}$ (resp. $\lambda_{\mypar k}^l$ and $\lambda_{\mypar k}^r$).}. It turns out that the preceding simplifications are  not available in the EMTF context. Still, it is possible to 
progress through the refined tools of nonlinear geometric optics. 

The preliminary step is to complete (on the Fourier side) the  spectral decomposition of $\cL$. This is already a fully separate matter, which appears implicitly in physics textbooks \cite{GP04} (but only in a partial way). 
In Section \ref{sec:specdecompo}, by exploiting symmetry and algebraic properties of $ \cL $, we are able to furnish a comprehensive treatment of all eigenvalues and all  eigenspaces of $\cL$. This precise description can serve different objectives, including the  classification of EMTF dispersion laws and of EMTF propagating waves. It is also a prerequisite for investigating the structure of resonances and transparencies, in the spirit of \cite{Met09,Rau12}.
This program is common and quite ancient in WKB analysis. Many theoretical results are available concerning symmetrizable quasilinear systems of conservation laws \cite{Met09,Rau12}. However, their implementation is often difficult when applied to real-world scenarios. As it happens, we aim at examining what can be done in the context of EMTF.
This setting is of course very specific but, on the other hand, it is strongly motivated by physics. It is inherited from preceding advances in this direction \cite{BC25,BC25p}. Now, it is quite complicated to obtain a full description of resonances and to check that transparencies do occur (or not). For this reason, an exhaustive study seems for the moment out of reach\footnote{Even in the more accessible context of Euler equations, the analysis of transparencies has been carried out in a number of stages \cite{Gal98,Mas01,Sch05}, and it is not completely finalized.}. 

In the present framework, the two solutions $ \PG \cU_0 $ and $ \PP_e \, \cU_0 $ of respectively 
FLM-\eqref{modueffective} and SLM-\eqref{characPe} are chosen periodic with respect to the spatial variable $ x $. In particular, we can compute according to \eqref{spamean} their corresponding (spatial) mean values $ \langle \PG \cU_0 \rangle $ and $ \langle \PP_e \, \cU_0 \rangle $. The strategy to prove Theorem \ref{thmain} is just to compare the time evolution of $ \langle \PG \cU_0 \rangle $ and $ \langle \PP_e \, \cU_0 \rangle $, and to show that they diverge. In this way, we can focus on the zero mode. It follows that the number of resonances to consider is strongly reduced. It suffices to examine the set $ R $ of resonances defined in \eqref{defredresonances}. Then, in Section \ref{sec:resonances},  exploiting the spectral analysis of $ \cL $, a dissection of the implied {\it resonant interactions} between oscillatory wave trains becomes available. 

That is not the end of the story. Indeed,  either all or part of resonances could be ineffective due to cancellations induced by the special configurations of the nonlinearities involved inside  EMTF. This is the well-known phenomenon of {\it transparency} \cite{CGM03}. In Sections \ref{transparenciesst} and \ref{transQL}, we examine what happens respectively at the level of the semilinear and quasilinear parts of FLM. Following these guidelines,  some precious (partial) information\footnote{The formulation of Theorem \ref{thmain} is deliberately kept a bit conceptual, to stay at a high level. In the sequel, the interplay between $ (\varrho,u,B) $ and $ E $ will be fully detailed, with explicit equations.} can be extracted. The semilinear term is not completely transparent (leading to Theorem \ref{thmain}), while the quasilinear term is.


\section{Background and notations}
While fixing some notation, we present  below the penalized system issued from EMTF by investigating near a backgroung state a regime of small amplitude $ \eps $ (with $ 0 < \eps \ll 1 $) and long time evolution ($ \tau \sim 1/ \eps $ while $ t = \eps \, \tau \sim 1 $). The reader is referred to \cite{BC25p}-Subsection 2.1 for a more step-by-step approach to the forthcoming material. In suitable unknowns
\[
\cU_{\varepsilon}={}^t(\varrho_{e\varepsilon},\varrho_{i\varepsilon},{}^tu_{e\varepsilon},{}^tu_{i\varepsilon},{}^tE_{\varepsilon},{}^tB_{\varepsilon}) = \cU_0 + \cO(\eps) \,, \qquad \cU_{\varepsilon} \in \RR \times \RR \times \RR^3 \times \RR^3\times \RR^3\times \RR^3 \, , 
\]
the penalized equations under consideration  reads
\begin{equation}
  \label{TFEM_cU}
\qquad  \partial_t \cU_\varepsilon - \frac{1}{\varepsilon} \, \cL \,\cU_\varepsilon =
   \sum_{j=1}^3 \cA_j(\eps,\cU_\eps) \, \part_{x_j} \cU_\eps + \cF(\eps,\cU_\eps)\,, \quad\quad  \cG \,\cU_\eps = \cO(\eps)\,, \quad\quad {\cU_\eps}_{|_{t=0}}=\cU_\eps^0\,,
\end{equation}
where $\cL$ is the ($ 14 \times 14 $ matrix-valued) linear differential operator 
\begin{equation}
  \label{DefL}
  \cL:=
    \, \left( \begin{array}{cccccc}
   0 & 0 & -\underline{a}_e \, \nabla \cdot  & 0 & 0 & 0 \\
   0 &  0 & 0 & -\underline{a}_i \, \nabla \cdot & 0 & 0 \\
   -\underline{a}_e \, \nabla & 0 & 0 & 0 & -{\rm I}_3\sqrt{\underline{n}/m_e} & 0 \\
   0 & -\underline{a}_i \, \nabla  & 0 & 0  & \ \ {\rm I}_3\sqrt{\underline{n}/m_i} & 0 \\
    0 & 0 & {\rm I}_3\sqrt{\underline{n}/m_e} & - {\rm I}_3\sqrt{\underline{n}/m_i} & 0  & \nabla \times\\
    0 & 0 & 0 & 0 & -\nabla \times  & 0
  \end{array}  \right) ,
\end{equation}
and where $\cG$ stands for the ($ 2 \times 14 $ matrix-valued) linear differential operator which is inherited from Gauss's constraints, namely
\begin{equation}
  \label{DefG}
\qquad  \cG \,\cU_\eps  = \cO(\eps) , \qquad \cG:=
  \left( \begin{array}{cccccc}
   0 & 0 & 0 & 0 & 0 & \nabla \cdot  \\
 \displaystyle \sqrt{\underline{n}/\underline{p}'_e} & \displaystyle - \sqrt{\underline{n}/\underline{p}'_i}  & 0 & 0 & \nabla \cdot  & 0
  \end{array}  \right) . 
\end{equation}
In \eqref{TFEM_cU}, the three expressions 
$\cA_j(\eps,\cU)$ represent matrices of size $ 14 \times 14 $ depending smoothly on $ \eps \in [0,1] $ and $ \cU \in \RR^{14} $ according to
\begin{equation}
  \label{expresscAj}
\qquad \cA_j(\eps,\cU) :=-\left( \begin{array}{cccccc}
  \displaystyle \frac{u_{e}^j}{\sqrt{{\underline n} m_e}} & 0 & \myunderbar{\mathscr R}_\eps (a_e) \, {}^t e^j & 0 & 0 & 0 \\
  0 &  \displaystyle \frac{u_{i}^j}{\sqrt{{\underline n} m_i}} & 0 & \myunderbar{\mathscr R}_\eps (a_i) \, {}^t e^j & 0 & 0 \\
  \myunderbar{\mathscr R}_\eps (a_e) \, e^j & 0 &  \displaystyle \frac{u_{e}^j}{\sqrt{{\underline n} m_e}} \, e^j\otimes e^j & 0 & 0 & 0 \\
  0 & \myunderbar{\mathscr R}_\eps (a_i) \, e^j & 0 & \displaystyle \frac{u_{i}^j}{\sqrt{{\underline n} m_i}} \, e^j\otimes e^j & 0 & 0 \\
  0 & 0 & 0 & 0 & 0  & 0 \\
  0 & 0 & 0 & 0 & 0  & 0 
\end{array}  \right) .
\end{equation}
In \eqref{TFEM_cU}, the source term $ \cF ={}^t (\cF_{\varrho_e},\cF_{\varrho_i},{}^t\cF_{u_e},{}^t\cF_{u_i},{}^t\cF_E,{}^t\cF_B) $ is specified by
\begin{equation}
  \label{expresscF}
  \qquad \left\{\begin{array}{l}
    \cF_{\varrho_e}=\cF_{\varrho_i}=\cF_B=0\,, \medskip\\
    \cF_{u_e} (\eps,\cU_\eps) := - \, u_{e\varepsilon}\times B_{\varepsilon}/{m_e} \, , \smallskip \\
    \cF_{u_i} (\eps,\cU_\eps) := + \, u_{i\varepsilon}\times B_{\varepsilon}/{m_i} \, , \smallskip \\
    \displaystyle \cF_E (\eps,\cU_\eps) := \, \myunderbar{\mathscr R} (g_e^{-1})\Big(\eps, \frac{ \varrho_{e\varepsilon}}{\sqrt{{\underline n} m_e}} \Big) \ \frac{u_e}{ \sqrt{{\underline n} m_e} } - \myunderbar{\mathscr R} (
    g_i^{-1}) \Big(\eps,\frac{ \varrho_{i\varepsilon}}{\sqrt{{\underline n} m_i}} \Big) \ \frac{u_i}{ \sqrt{{\underline n} m_i} } \, .
  \end{array}\right.
\end{equation}
Above, the vector $e^j$ with $ j \in \{1,2,3\} $ denotes the $j$-th vector of the canonical basis of $ \RR^3 $. The positive real numbers $ m_e $ and $ m_i $ (with $ 0 < m_e < m_i $) stand for the masses of electrons and ions. We work away away from vacuum, near equilibrium densities $ \underline{\rm n}_e= \underline{\rm n}_i=\underline{n} \in \RR_+^* $. The underlined letters indicate that we look at the proximity of the constant solution $ {}^t (\underline{n},\underline{n},0,0,0,0) $.

With this in mind, given a fixed position $ \underline{q} \in \RR $ and a function $ h \in \mathscr C^\infty (\RR;\RR) $, 
we consider the function $ \mathscr R (h,\underline{q}): [0,\eps_0] \times \RR \rightarrow \RR $ determined by
\begin{eqnarray*}
\mathscr R (h,\underline{q}) (\eps,q) & := & \frac{1}{\varepsilon}\, \big(h(\underline{q} +\varepsilon q) - h(\underline{q})\big) = \int_0^1 h'(\underline{q} +\varepsilon q \, r) \, q \ dr \\
&\,=&  h'(\underline{q}) \, q \, +\,  \cO(\eps) \, , \quad \mathscr R (h,\underline{q}) (0,q) = h'(\underline{q}) \, q \, .
\end{eqnarray*}
When $ \underline{q} = 0 $, we just use $ \myunderbar{\mathscr R} (h) := \mathscr R (h,0) $. Then, we set 
\[ \myunderbar{\mathscr R}_\eps (a_s) \equiv \myunderbar{\mathscr R} (a_s)(\eps,q_{s\eps}) \, , \qquad q_{s\eps} := \varrho_{s\varepsilon} / \sqrt{\underline{n} \, m_s} \, ,  \]
which allows to interpret $ \myunderbar{\mathscr R}_\eps (a_s) $ as a function of the unknown state variable $ \varrho_{s\varepsilon} $. 
This explains some notation inside $ \cA_j $ and $ \cF_E $, where $ h $ is replaced by $ a_s $ and $ g_s^{-1} $. In particular, retain that
\begin{equation}
  \label{retainforbaras}
    \myunderbar{\mathscr R}_0 (a_s) = \frac{a'_s(0)}{\sqrt{\underline{n} \, m_s}} \ \varrho_{s\varepsilon} \, , \qquad \forall s \in \{e,i \} \, . 
\end{equation}
The origin of $ a_s $ and $ g_s $ is as follows. We start with pressure laws $ p_s : \RR_+ \rightarrow \RR_+^* $ with $  s\in\{e,i\} $ that are strictly increasing. Let us define $ \underline{p}_s' := p_s'(\underline{n}) $. Then, instead of working with  the original densities $ {\rm n}_s$, and in order to symetrize the original EMTF system, we use
\begin{equation*}
  \label{implens}
        {\rm q}_s := g_s({\rm n}_s)\,, \qquad g_s({\rm n}_s):=\frac{1}{\sqrt{m_s}}\int_{\underline{n}}^{{\rm n}_s} \frac{\sqrt{p_s'(\theta)}}{\theta}\ d\theta\,,
        \qquad s\in\{e,i\}\, . 
\end{equation*}
 This change of density variables leads to two scalar functions $ a_s : \RR \rightarrow \RR_+ $ given by
\begin{equation*}
  \label{changedensity}
  \qquad \qquad \ a_s({\rm q}_s) :=g_s^{-1}({\rm q}_s) \, g_s'\circ g_s^{-1}({\rm q}_s)=g_s'({\rm n}_s) \, {\rm n}_s= \sqrt{p_s'({\rm n}_s) /m_s}\,, \qquad s\in\{e,i\}\,,
\end{equation*}
whose values at the image  $ g_s (\underline{n}) = 0 $ of the equilibrium position $ \underline{n}\neq0 $ are such that
\begin{equation}
  \label{defunderlinea}
  \underline{a}_s := a_s(\underline{\rm q}_s) =a_s(0) =  \sqrt{\underline{p}_s'/m_s} >0\,, \qquad s\in\{e,i\}\,.
\end{equation}
We will also exploit the notation
\begin{equation}
  \label{defunderlineb}
  \underline{b}_s :=\sqrt{\underline{n}/m_s} >0 \,, \qquad s\in\{e,i\}\,;  \qquad \underline{b}=\sqrt{\underline{b}_e^2+\underline{b}_i^2}\,.
\end{equation}
Note that $ \cL $ is only parametrized by $ \underline{a}_s $ and $ \underline{b}_s $ with $ s\in\{e,i\} $.
At this stage, all the ingredients implied at the level of  \eqref{TFEM_cU}-\eqref{DefL}-\eqref{DefG}-\eqref{expresscAj}-\eqref{expresscF} have been identified. These equations serve as our starting point. We refer the reader to \cite{BC25p,GP04} for a detailed account of their origin. For later use, following \cite{BC25p}, it is important to recall the transformation between the original unsymmetrized variables $U_0= {}^t(n_{e0}, n_{i0}, {}^t v_{e0},  {}^t v_{i0},{}^t E_{0},{}^t B_{0})$ of the EMTF system in \cite{BC25p} and the convenient symmetrized ones for the asymptotic analysis, i.e. $\cU_0\,$:
\begin{equation*}
  \label{passagecUU0}
  \begin{array}{rl}\cU_0 \! \! \! & = \cA_0^{1/2} \,  U_0 = {}^t(\varrho_{e0},\varrho_{i0},{}^tu_{e0},{}^tu_{i0},{}^tE_{0},{}^tB_{0}) \smallskip \\
    \ & = {}^t \Bigl(\sqrt{\underline{p}'_e/\underline n} \ n_{e0},\sqrt{\underline{p}'_i/\underline n} \ n_{i0},\sqrt{\underline n \, m_e} \ {}^t v_{e0}, \sqrt{\underline n \, m_i} \ {}^t v_{i0},{}^t E_{0},{}^t B_{0} \Bigr) \, .  \end{array}
\end{equation*}
where
\begin{equation*}
  \cA_0:=\left( \begin{array}{cccccc}
    \underline{n} \, m_e & 0 & 0 & 0 & 0 & 0 \\
    0 & \underline{n} \, m_i & 0 & 0 & 0 & 0 \\
    0 & 0 & \underline{n}\, m_e \, {\rm I}_{3\times 3} & 0 & 0 & 0 \\
    0 & 0 & 0 & \underline{n} \, m_i \,  {\rm I}_{3\times 3} & 0 & 0 \\
    0 & 0 & 0 & 0 &  {\rm I}_{3\times 3}  & 0 \\
    0 & 0 & 0 & 0 & 0  &  {\rm I}_{3\times 3} 
  \end{array}  \right)\,.  
\end{equation*}

Now, we explain briefly how FLM and SLM can be derived from EMTF, that is from \eqref{TFEM_cU}. Given some function $ \psi(\tau,t,x) $ which is almost periodic with respect to $ \tau $, we can compute
\begin{equation}\label{time-averaging}
\MM_\tau \bigl( \psi(\tau,t,x) \bigr) := \lim_{\cT \rightarrow + \infty} \frac {1}{\cT}\int_0^\cT  \psi (\tau,t,x) \ d \tau \,.
\end{equation}
The map $\MM_\tau$ is a time-averaging (or mean value) operator. 
From \cite{BC25p,Sch05}, there exists a profile 
\[ \cU_0=\cU_0(t,x) \in L^\infty([0,T];H^{s}) \cap  W^{1,\infty}([0,T];H^{s-1}) \, , \qquad s>5/2 \, ,\] such that
\begin{equation*}
  \label{difflim}
  \forall \sigma <  s \,, \qquad \lim_{\eps \rightarrow 0 +} \| \cU_\eps (t,x) - e^{t/\eps} \, \cU_0 (t,x) \|_{\mathscr{C}([0,T];H^{\sigma})} = 0\, .
\end{equation*}
Here $ H^s \equiv H^s (\TT^3) $ is the space of periodic functions (equipped with the standard Sobolev norm). 
It turns out that the profile $ \cU_0 $ is the unique solution of the modulation equation 
\begin{equation}
  \label{modu0}
  \qquad \ \part_t \cU_0 = \MM_\tau \Bigl \lbrack e^{- \tau \, \cL } \, \bigl( A(e^{\tau \, \cL } \, \cU_0,D_x)
  \, e^{\tau \, \cL } \, \cU_0 \bigr) + e^{- \tau \, \cL } f(e^{\tau \, \cL } \, \cU_0) \Bigr \rbrack \,,
\end{equation}
associated with the initial data
\begin{equation}
  \label{modu0ini0}
{\cU_0}_{|_{t=0}} = \cU^0_0 \,.
\end{equation}
In \eqref{modu0} the quasilinear term $A(\cdot,D_x)$ and the source term $f$ are given by
\begin{equation}
  \label{expresscA}
  A(\cU_0,D_x) \cU_0= \sum_{j=1}^3 A_j(\cU_0) \, \partial_{x_j} \cU_0\,, \quad A_j(\cU_0)=\cA_j(0,\cU_0)\, , \quad \quad f(\cU_0)=\cF (0,\cU_0)\,.
\end{equation}
The solution of \eqref{modu0} must also satisfy conditions coming from \eqref{DefG}, namely
\begin{equation}
  \label{Gausslaws}
  \cG \, \cU_0 =0\,.
\end{equation}
Let $\PG$ be the orthogonal projection onto $ {\rm Ker}\, \cG $, which may be identified through the conditions
\begin{equation}
  \label{characP}
  \PG^2 = \PG \, , \qquad \PG = \PG^* \, , \qquad \text{\rm Im} \, \PG = {\rm Ker}\, \cG \,.
\end{equation}
The profile $\cU_0$ can satisfy  \eqref{modu0ini0}-\eqref{Gausslaws} provided that $\cU_0^0 = \PG \cU^0_0$. Then, applying the projector $\PG$ to \eqref{modu0} and using Lemma \ref{compro} (in Subsection~\ref{ss:ACP}), it is a solution of \eqref{modu0}-\eqref{Gausslaws} if and only if $ \cU_0 = \PG \cU_0 $ satisfies the following Fast Limit Model
\begin{equation}
  \label{modueffective}
  \begin{array}{rl}
  \displaystyle  \part_t \cU_0 = \! \! \! & \displaystyle \MM_\tau \Bigl \lbrack e^{- \tau \PG \cL \PG} \PG \bigl( A(e^{\tau \PG \cL \PG} \PG \cU_0,D_x)
  \, e^{\tau \PG \cL \PG} \PG \cU_0 \bigr) \Bigr \rbrack \\
  \ & + \displaystyle \, \MM_\tau \Bigl \lbrack  e^{- \tau \PG \cL \PG} \PG f(e^{\tau \PG \cL \PG} \PG \cU_0) \Bigr \rbrack \, . 
  \end{array}
\end{equation}
The first line of the right hand side of \eqref{modueffective}
 will be referred to as the {{\it quasilinear} term; the second will be called the 
 {\it semilinear} (or source) term. The implementation of $ \PG $ allows to replace 
 \eqref{modu0}-\eqref{Gausslaws} by the sole equation \eqref{modueffective}.

\noindent The waves polarized in the kernel of $ \cL $ do not oscillate rapidly with respect to $ t $, because a large time derivative is not activated at the level of \eqref{TFEM_cU}. Such waves can be put apart by applying the orthogonal projection $ \PP_e $ onto $\text{Ker}\, \cL \cap  \text{Ker}\, \cG $.  Retain that
\begin{equation}
  \label{characPe}
  \PP_e^2 = \PP_e \, , \qquad \PP_e = \PP_e^* \, , \qquad \text{\rm Im} \, \PP_e = \text{Ker}\, \cL \cap {\rm Ker}\, \cG \,.
\end{equation}
When the initial data $\cU_0^0$ is prepared, that is when $\cU_0^0=\PP_e \, \cU_0^0$ , the equation \eqref{modueffective} reduces to the (effective) SL Model
\begin{equation}
  \label{modu0effective}
 \qquad  \partial_t (\PP_{\! e} \, \cU_0) =   \sum_{j=1}^3 \PP_{\! e} \, A_j(\PP_{\! e} \, \cU_0) \, \part_{x_j} \PP_{\! e} \, \cU_0 + \PP_{\! e} \, f(\PP_{\! e} \,\cU_0)\, , \quad \  \cU_0^0 = \PP_{\! e} \, \cU^0_0 \,.
\end{equation}
In \cite{BC25p} it was shown that \eqref{modu0effective} is the same system as the incompressible XMHD given by
\begin{equation}\label{putxmhd}
  \left \lbrace \begin{array}{l}
    \displaystyle \part_t u_0 + (u_0\cdot \nabla ) u_0 + \nabla p_0 +  \myunderbar{\rho}^{-1}B^*_0 \times (\nabla \times B_0) = 0 \,, \medskip \\
    \displaystyle \part_t B^*_0 + \nabla \times \bigl( B^*_0 \times (u_0 - (d_i/ \myunderbar{\rho}) \,\nabla \times B_0) \bigr) \smallskip\\
    \qquad \  + \, (d_e^2/\myunderbar{\rho})\ \nabla \times \bigl( (\nabla \times u_0)
    \times (\nabla \times B_0 ) \bigr) = 0 \,,
  \end{array} \right.
\end{equation}
where the unknowns $ (u_0,B^*_0) \in \RR^3 \times \RR^3 $  are divergence free, i.e.,
\begin{equation*}
  \label{divfreeini}
  \nabla \cdot u_0 = 0 , \qquad \nabla \cdot B^*_0 = 0 , \qquad \nabla \cdot B_0 = 0 \,.
\end{equation*}
The part $ u_0 $ is akin to the center of mass velocity, which is
\begin{equation*}\label{centermasselocity}
 u_0 := \frac{m_e \, v_{e0} + m_i \, v_{i0}}{m_e + m_i} \, .
\end{equation*}
The part $ B^*_0 $ is connected to the usual magnetic field $ B_0 $ through the constitutive relation
\begin{equation*}
  \label{lienBB*2}
  B_0 = \big(\Id - \underline{b}^{-2} \,\Delta\big)^{-1} B^*_0 \, , \qquad {\underline b} = \sqrt{\underline{b}_e^2+\underline{b}_i^2}= d_e^2/\myunderbar{\rho} \, .
\end{equation*}
Above, the dimensionless parameters  $\myunderbar{\rho}$,  $d_e$, and $d_i$ which stand respectively for the normalized total mass density, the normalized electron skin depth (resurgence of electronic inertial effects), and the normalized ion skin depth (resurgence of ionic inertial effects or Hall effects). These parameters are given in terms of the dimensionless density $\underline{n}$,  the dimensionless electronic mass $m_e$, the dimensionless ionic mass $m_i$, and the charge number $Z$ according to the formulas
\begin{equation*}
  \label{eqn:cteXMHD}
\qquad  \myunderbar{\rho}:=\underline{n} \, (m_e+ m_i) \, , \qquad d_e :=\sqrt{\updelta}\  \frac{m_i}{Z} \, , \qquad d_i := (1-\updelta) \, \frac{m_i}{Z} \,, \qquad \updelta := Zm_e/m_i \, .
\end{equation*}
Since the context is periodic in $ x $, the discussion is well-suited for a Fourier analysis. The solutions are real vector valued periodic functions which may be represented as Fourier series according to
\[
L^2 (\TT^3; \RR^{14}) \ni \cU(x) = \sum_{k \in \ZZ^3} \cU_k \ e^{{\rm i} \, k \cdot x} \, , \qquad \cU_k = \bar \cU_{-k}\in \CC^{14} \, , \qquad \TT := \RR /(2 \pi \ZZ) \, .
\]
The Fourier coefficients $ \cU_k $ are vectors of $ \CC^{14} $. The complex vector space $ \CC^{14} $ is endowed with the Hermitian inner product
$ \langle\cU,\cV \rangle = {}^{t} \bar \cU \, \cV = \bar\cU \cdot \cV $. When $ k $ is (implicitely) specified, we will drop the subsript $ k $, and $ \cU_k \equiv \cU $ may be represented according to
\begin{equation}\label{decompocUen}
\cU = {}^t (\varrho_e,\varrho_i,{}^t u_e,{}^t u_i,{}^t E,{}^t B)\in \CC \times \CC \times \CC^{3} \times \CC^{3} \times \CC^{3} \times \CC^{3} \, .
\end{equation}
The four last components of $ \cU $, namely $ u_e $, $ u_i $, $ E $ and $ B $, are vectors in $ \CC^3 $, which can be further broken up into
\[ u = \sum_{j=1}^3 \, u_j \ e^j \, , \qquad u_j := {}^t e^j  u \, , \qquad u \in \{u_e,u_i,E,B \} \, , \]
where $ \{ e^j \}_{j= 1,2,3} $ is the canonical basis of $\RR^3 $. Depending on the choice of $ k \in \ZZ^3 $ with $ k \not = 0 $, we will also write $ u $ in the form
\begin{equation}\label{deroulu}
 u = \sum_{j=1}^3 \, \mathbbm{u}_j \ \mathbbm{e}^j_k \, , \qquad \mathbbm{u}_j := {}^t \mathbbm{e}^j_k  u \, , \qquad u \in \{u_e,u_i,E,B \} \, ,
\end{equation}
where $ \{ \mathbbm{e}^j_k \}_{j= 1,2,3} $ is a right handed basis adjusted in such a way that $\mathbbm{e}^1 \equiv \mathbbm{e}^1_k:=k/|k|$. More precisely, the unit direction $ \mathbbm{e}^1_k $ is completed with $(\mathbbm{e}^2_k,\mathbbm{e}^3_k)$ so that\footnote{Make sure to distinguish between the vectors $  e^j $ and $ \mathbbm{e}^j_k $ (given that we will sometimes omit to mention $ k $ when dealing with $ \mathbbm{e}^j_k $). Also be careful that $ \mathbbm{e}^j_k $ is odd with respect to $ k $ for $ j = 1 $ or $ j= 2 $, and it is even for $ j = 3 $.}
\begin{equation}\label{proprme}
\mathbbm{e}^1_k \wedge \mathbbm{e}^2_k = \mathbbm{e}^3_k \, , \qquad \mathbbm{e}^1_{-k} = - \mathbbm{e}^1_k \, , \qquad \mathbbm{e}^2_{-k} = - \mathbbm{e}^2_k \, , \qquad \mathbbm{e}^3_{-k} = +  \mathbbm{e}^3_k \, .
\end{equation}
Locally, near any $ \tilde k \in \RR^3 \setminus \{ 0 \} $, the fields $ k \rightarrow \mathbbm{e}^j_k $ can be chosen of class $ C^\infty $. To this end, it suffices to select some (fixed) vector
$ \tilde {\mathbbm{e}} $ which is not colinear to $ \tilde k $, and then define $ \mathbbm{e}^2_k := \mathbbm{e}^1_k \times \tilde {\mathbbm{e}} / \mypar \mathbbm{e}^1_k \times \tilde {\mathbbm{e}} \mypar $, as well as $
\mathbbm{e}^3_k := \mathbbm{e}^1_k \times \mathbbm{e}^2_k $. It is clear that we have \eqref{proprme}. Note that the fields $ k \mapsto \mathbbm{e}^j_k $ cannot be extended smoothly on the whole of $ \RR^3 \setminus \{ 0 \} $ due to the \href{https://en.wikipedia.org/wiki/Hairy_ball_theorem}{hairy ball theorem}. Now, given  a vector $ A \in \CC^3 $ and $ k \in \ZZ^3 $ with $ k \not = 0 $, we can separate the components which are respectively parallel and orthogonal to the direction $ k $ according to
\[
A_\mypar := \bigl( {}^t \mathbbm{e}^1_k  A \bigr) \  \mathbbm{e}^1_k \, , \qquad A_\perp := \bigl( {}^t \mathbbm{e}^2_k  A \bigr) \ \mathbbm{e}^2_k + \bigl( {}^t \mathbbm{e}^3_k  A \bigr) \ \mathbbm{e}^3_k \, , \qquad A = A_\mypar + A_\perp \, .
\]
Recall that $ \cL $ and $ \cG $ are the two matrix valued differential operators defined by \eqref{DefL} and \eqref{DefG} with symbols $ \cL(i\xi) $ and $ \cG(i\xi)$. The actions of $ \cL $ and $ \cG $ on periodic functions can be expanded according to
\[
\cL \, \cU = \sum_{k \in \ZZ^3} \cL_k \, \cU_k \ e^{{\rm i} \, k \cdot x} \, , \qquad \cG \, \cU = \sum_{k \in \ZZ^3} \cG_k \, \cU_k \ e^{{\rm i} \, k \cdot x} \, .
\]
Above, $ \cL_k := \cL(i k ) $ and $ \cG_k := \cG(i k ) $ are $ 14 \times 14 $ and $ 2 \times 14 $ complex valued matrices given by
\begin{equation} \label{DefLk}
  {\cL}_k := \left( \begin{array}{cccccc}
    0 & 0 & -\, {\rm i}\, \underline{a}_e\, {}^tk & 0 & 0 & 0 \\
    0 &  0 & 0 &   - \,{\rm i} \, \underline{a}_i \, {}^tk & 0 & 0 \\
     - \,{\rm i}\, \underline{a}_ek & 0 & 0 & 0 &  -\,\underline{b}_e\,{\rm I}_3 & 0 \\
     0 &   -\, {\rm i} \, \underline{a}_i k & 0 & 0  & \ \ \,\underline{b}_i\,{\rm I}_3 & 0 \\
     0 & 0 & \underline{b}_e\,{\rm I}_3 & -\,\underline{b}_i\,{\rm I}_3 & 0  &  {\rm i}\,k\times\\
     0 & 0 & 0 & 0 &  -\, {\rm i}\, k\times & 0
  \end{array}  \right) ,
\end{equation}
where by $ \underline{a}_s $ and $ \underline{b}_s $ are defined by \eqref{defunderlineb}, and
\begin{equation}\label{DefGk}
\  {\cG}_{k}:=
  \left( \begin{array}{cccccc}
    0 \, & 0 & 0 & 0 & 0 &  {\rm i} \,{}^tk  \\
    \displaystyle \sqrt{\underline{n}/\underline{p}'_e} \, &  \displaystyle -\sqrt{\underline{n}/\underline{p}'_i}  & 0 & 0 &  {\rm i}\, {}^tk  & 0
  \end{array}  \right) =
  \left( \begin{array}{cccccc}
    0 \, & 0 & 0 & 0 & 0 &  {\rm i} \,{}^tk  \\
    \displaystyle \underline{b}_e/\underline{a}_e \, &  \displaystyle - \underline{b}_i / \underline{a}_i & 0 & 0 &  {\rm i}\, {}^tk  & 0
  \end{array}  \right) .
\end{equation}
Similarly, the actions of $ \PG $ and $ \PP_e $ may be specified on each Fourier coefficient. For instance
\begin{equation}
  \label{characPkk}
  \PG_k^2 = \PG_k \, , \qquad \PG_k = \PG_k^* \, , \qquad \text{\rm Im} \, \PG_k = {\rm Ker}\, \cG_k \,.
\end{equation}
Starting from these conventions, the rest of the paper is devoted to the proof of Theorem~\ref{thmain}.


\section{Spectral analysis of the penalized operator.}\label{sec:specdecompo}
Let us come back to  \eqref{modueffective}. In this evolution equation (with respect to $ t $ and $ x $), the right hand side is written in a rather abstract form which does not reveal the numerous oscillating terms which may disappear under the action of the (time) mean value operator $ \MM_\tau $. The non-zero contributions are those which emanate from resonances. To identify the structure of resonances, it is necessary to compute $ e^{\pm \tau  \PG \cL \PG} $. To this end, we need to exhibit the spectral decomposition of the operator $ \PG \cL \PG $. We begin in Subsection~\ref{ss:ACP} by stating a commutative property. We continue with Subsection \ref{subsec:cGk} where we give a complete description of $ \text{Ker} \, \cG_k $ and $ (\text{Ker} \, \cG_k)^\perp $. In Subsection \ref{subsec:genfor}, we provide preliminary information on $ \cL_k $. In Subsection \ref{subsec:genforPP}, we incorporate the role of $ \PG_k $. In Subsections \ref{subsubsec:compze} and \ref{subsubsec:compzeknot0}, we deal respectively with the cases $ k = 0 $ and $ k \not = 0 $.

\subsection{A commutative property}
\label{ss:ACP}
The matrix $ \cL_k $ is clearly skew-adjoint ($\cL_k^* = {}^t\bar{\cL}_k=-\cL_k$); its eigenvalues are therefore  purely imaginary, of the form $ {\rm i} \, \lambda $ with $ \lambda \in \RR $. Retain that $ \cL^* = - \cL $. 

\begin{lemma}[A commutative property]\label{compro}
We have $ \cL = \PG \cL= \cL \PG = \PG \cL \PG $.
\end{lemma}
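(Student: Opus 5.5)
The plan is to reduce the whole statement to the single algebraic identity $\cG \, \cL = 0$, i.e. $\text{Im}\, \cL \subset \text{Ker}\, \cG$, and then to use the facts that $\PG$ is an orthogonal projection and that $\cL$ is skew-adjoint. All operators involved ($\cL$, $\cG$, $\PG$) act diagonally on Fourier modes, with $\PG$ acting as $\PG_k$ on the $k$-th coefficient, as in \eqref{characPkk}. It therefore suffices to prove, for every fixed $k \in \ZZ^3$, that
\[
\cL_k = \PG_k \cL_k = \cL_k \PG_k = \PG_k \cL_k \PG_k .
\]

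\emph{Step 1: $\cG_k \cL_k = 0$.} I will compute this product row by row, using \eqref{DefLk} and \eqref{DefGk}.

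For the first row of $\cG_k$, namely $(0,0,0,0,0,{\rm i}\,{}^tk)$, only the $B$-row of $\cL_k$ contributes. That row is $(0,0,0,0,-{\rm i}\,k\times,0)$, so the product acts on $E$ by
\[
{\rm i}\,{}^tk\,(-{\rm i}\,k\times E) = k\cdot(k\times E) = 0 .
\]

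For the second row of $\cG_k$, the product is the combination
\[
(\underline{b}_e/\underline{a}_e)\,(\text{row }\varrho_e) \;-\; (\underline{b}_i/\underline{a}_i)\,(\text{row }\varrho_i) \;+\; {\rm i}\,{}^tk\,(\text{row }E)
\]
of rows of $\cL_k$. I will sort the contributions by slot:
\begin{itemize}
\item The $\varrho_e$-row contributes $-{\rm i}\,\underline{b}_e\,{}^tk$ in the $u_e$ slot.
\item The $\varrho_i$-row contributes $+{\rm i}\,\underline{b}_i\,{}^tk$ in the $u_i$ slot.
\item The $E$-row, multiplied by ${\rm i}\,{}^tk$, contributes $+{\rm i}\,\underline{b}_e\,{}^tk$ in the $u_e$ slot and $-{\rm i}\,\underline{b}_i\,{}^tk$ in the $u_i$ slot.
\item The $E$-row also contributes ${\rm i}\,{}^tk\,({\rm i}\,k\times\cdot) = 0$ in the $B$ slot.
\end{itemize}
Everything cancels. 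This cancellation is exactly why $\cG$ carries the weights $\underline{b}_s/\underline{a}_s$: Gauss's law is preserved by the linear flow.

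\emph{Step 2: $\PG\cL = \cL$.} By Step 1, $\text{Im}\,\cL_k \subset \text{Ker}\,\cG_k = \text{Im}\,\PG_k$, so $\PG_k\cL_k = \cL_k$.

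\emph{Step 3: $\cL\PG = \cL$.} I will take adjoints in $\PG_k\cL_k = \cL_k$. Using $\PG_k^* = \PG_k$ from \eqref{characPkk} and $\cL_k^* = -\cL_k$, this gives $-\cL_k\PG_k = -\cL_k$, hence $\cL_k\PG_k = \cL_k$.

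\emph{Step 4: $\PG\cL\PG = \cL$.} Combining Steps 2 and 3 gives $\PG_k\cL_k\PG_k = \PG_k\cL_k = \cL_k$.

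Summing over $k$ yields the statement at the operator level. The only real work is the bookkeeping in Step 1; I expect no genuine obstacle.
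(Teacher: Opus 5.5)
Your proposal is correct and follows the paper's proof essentially step for step. The paper likewise computes $\cG_k \cL_k \cU = 0$ componentwise from \eqref{cLkcU} and \eqref{DefGk}, concludes $\cL_k = \PG_k \cL_k$ from $\text{Im}\,\cL_k \subset \text{Ker}\,\cG_k$, and then takes adjoints using $\PG_k^* = \PG_k$ and $\cL_k^* = -\cL_k$ to get $\cL_k = \cL_k \PG_k = \PG_k \cL_k \PG_k$.
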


\begin{proof} Using  \eqref{DefLk}, we find
\begin{equation}\label{cLkcU}
\cL_k \, \cU =
\left(
\begin{array}{c}
  -{\rm i}\underline{a}_e \, {}^t k u_e\\
  -{\rm i}\underline{a}_i \, {}^t k u_i\\
  -{\rm i}\underline{a}_e k \varrho_e- \underline{b}_e E \\-{\rm i}\underline{a}_i k \varrho_i+ \underline{b}_i E\\
  \underline{b}_e u_e - \underline{b}_i u_i + {\rm i}k\times B \\ -{\rm i}k\times E
\end{array}
\right) .
\end{equation}
Then, from \eqref{DefGk}, we deduce that
\[ \cG_k \cL_k \, \cU =
\left(
\begin{array}{c}
  {}^t k (k\times E) \\
  -{\rm i}\underline{b}_e \, {}^t k u_e + {\rm i}\underline{b}_i \, {}^t k u_i +
  {\rm i} k \cdot (\underline{b}_e u_e - \underline{b}_i u_i + {\rm i}k\times B)
\end{array}
\right) =
\left(
\begin{array}{c}
  0 \\
  0
\end{array}
\right) \,, \qquad \forall \, \cU \in \CC^{14} \,. \]
Since $ \PG_k $ is the orthogonal projector onto the kernel of $ \cG_k $, we have
\begin{equation}
  \label{eqnGL0}{\rm Im\, } \cL_k \subset  \text{Ker} \, \cG_k \, , \qquad \cL_k = \PG_k \cL_k \, , \qquad \cL_k^* = \cL_k^* \PG_k \, , \qquad \forall \, k \in \ZZ \, .
\end{equation}
Since $ \cL_k $ is skew-adjoint, from the last identity inside \eqref{eqnGL0}, we get that $ - \cL_k = -  \cL_k \PG_k $, and therefore $ \cL_k =  \PG_k \cL_k = \cL_k \PG_k = \PG_k \cL_k \PG_k $, which is equivalent to the expected result.
\end{proof}
Retain that
\begin{equation*}
\Id_{\CC^{14}} = \text{Ker}\, \cL_k \oplus {\rm Im\, } \cL_k = \text{Ker}\, \cG_k \oplus ( \text{Ker}\, \cG_k)^\perp\,, \quad {\rm Im\, } \cL_k \subset \text{Ker}\, \cG_k\,.
\end{equation*} 
In accordance with \cite{BC25p}, we then define the following orthogonal Projections (Proj)
\begin{eqnarray*}
&&  \PP:= \text{Proj}_{|_{ \text{Ker}\, \cL}}\,,  \quad  \QQ:= \text{Proj}_{|_{ {\rm Im\, } \cL}}\,,   \quad  \PP_e:= \text{Proj}_{|_{ \text{Ker}\, \cL \cap  \text{Ker}\, \cG }}\,, \\
&&  \PG:=  \text{Proj}_{|_{\text{Ker}\, \cG}}=\text{Proj}_{|_{ (\text{Ker}\, \cL \cap  \text{Ker}\, \cG) \oplus  {\rm Im\, } \cL}}= \PP_e + \QQ\,. 
\end{eqnarray*}  


\subsection{Structure of $ \text{Ker} \, \cG_k $ and $ (\text{Ker} \, \cG_k)^\perp $.}\label{subsec:cGk}
Recall that
\[ (\text{Ker} \, \cG_k)^\perp = \{ \, \cU \in \CC^{14} \ ; \ \bar \cU \cdot \cV = 0\,, \ \forall \,\cV \in \text{Ker} \, \cG_k \, \} \,. \]
In the sense of complex vector subspaces of $ \CC^{14}  $, we have $ \CC^{14} = \text{Ker} \, \cG_k \oplus (\text{Ker} \, \cG_k)^\perp $.


\subsubsection{The case $ k = 0 $.}\label{subsubsec:k=0}
We find that ${\rm dim\,}(\text{Ker}\, \cG_0)=13$ and $\text{Ker}\, \cG_0 = {\rm Vect}\{g_0^l\}_{l\in\{1,\ldots,13\}}$ with
\[ ^tg_0^1 :=\bigl(\underline{p}'_e +\underline{p}'_i \Bigr)^{-1/2}\ {}^t \big(\sqrt{\underline{p}_e'},\sqrt{\underline{p}_i'},0,0,0,0\big) = \Big(\frac{\underline{a}^2_e}{\underline{b}^2_e} + \frac{\underline{a}^2_i}{\underline{b}^2_i} \Big)^{-1/2}\
  {}^t\Big(\frac{\underline{a}_e}{ \underline{b}_e},\frac{\underline{a}_i}{ \underline{b}_i},0,0,0,0\Big)\, ,
\]
and
\[ \begin{array}{lll}
{}^tg_0^2:={}^t(0,0,{}^te^1,0,0,0)\,, \qquad &  {}^tg_0^3:={}^t(0,0,{}^te^2,0,0,0)\,, \qquad &  {}^tg_0^4:={}^t(0,0,{}^te^3,0,0,0)\,, \smallskip \\{}^tg_0^5:={}^t(0,0,0,{}^te^1,0,0)\,, &  {}^tg_0^6:={}^t(0,0,0,{}^te^2,0,0)\,, &  {}^tg_0^7:={}^t(0,0,0,{}^te^3,0,0)\,, \smallskip \\{}^tg_0^8:={}^t(0,0,0,0,{}^te^1,0)\,, &  {}^tg_0^9:={}^t(0,0,0,0,{}^te^2,0)\,, &{}^tg_0^{10}:={}^t(0,0,0,0,{}^te^3,0)\,, \smallskip \\  {}^tg_0^{11}:={}^t(0,0,0,0,0,{}^te^1) \, , &{}^tg_0^{12}:={}^t(0,0,0,0,0,{}^te^2) \, , &{}^tg_0^{13}:={}^t(0,0,0,0,0,{}^te^3) \,.
\end{array} \]
Furthermore
\[
(\text{Ker} \, \cG_0)^\perp = \text{Vect} \ \{ \mathfrak{g}_0^1 \} \, , \qquad
  \mathfrak{g}_0^1 := \displaystyle \Big(\frac{\underline{b}^2_e}{\underline{a}^2_e} + \frac{\underline{b}^2_i}{\underline{a}^2_i} \Big)^{-1/2}\
  {}^t\Big(\frac{\underline{b}_e}{ \underline{a}_e},-\frac{\underline{b}_i}{ \underline{a}_i},0,0,0,0\Big) \, .
\]


\subsubsection{The case $ k \not = 0 $.}\label{subsubsec:knot0}
For $ k \not = 0 $, the two lines of $ \cG_k $ are independent vectors. Thus, we must have ${\rm dim\,}(\text{Ker}\, \cG_k)=12$. More precisely, we find that $\text{Ker}\, \cG_k = {\rm Vect}\{g_k^l\}_{l\in\{1,\ldots,12\}}$ with
\[ \begin{array}{ll}
  {}^tg_k^1 := & \displaystyle \! \! \! \Big(1+\frac{\underline{b}_e^2}{\underline{a}_e^2} \, |k|^{-2}\Big)^{-1/2} \ {}^t\Big(1,0,0,0,+ {\rm i} \,  \frac{\underline{b}_e}{\underline{a}_e} \, |k|^{-1}\,{}^t\mathbbm{e}_k^1,0\Big)\,, \medskip \\
  {}^tg_k^2 := & \displaystyle \! \! \!  \Big(1+\frac{\underline{b}_i^2}{\underline{a}_i^2} \, |k|^{-2}\Big)^{-1/2} \ {}^t\Big(0,1,0,0,- {\rm i} \,  \frac{\underline{b}_i}{\underline{a}_i} \, |k|^{-1}\,{}^t\mathbbm{e}_k^1,0\Big)\,,
\end{array} \]
and
\[ \begin{array}{ll}
{}^tg_k^3:={}^t(0,0,0,0,{}^t\mathbbm{e}_k^2,0)\,,\qquad &
  {}^tg_k^4:={}^t(0,0,0,0,{}^t\mathbbm{e}_k^3,0)\,, \smallskip\\
  {}^tg_k^5:={}^t(0,0,0,0,0,{}^t\mathbbm{e}_k^2)\,, \qquad &
  {}^tg_k^6:={}^t(0,0,0,0,0,{}^t\mathbbm{e}_k^3)\, ,
    \end{array}
\]
as well as
\[ \begin{array}{lll}
{}^tg_k^7:={}^t(0,0,{}^te^1,0,0,0)\,,\qquad & {}^tg_k^8:={}^t(0,0,{}^te^2,0,0,0)\,,\qquad &
  {}^tg_k^9:={}^t(0,0,{}^te^3,0,0,0)\,, \smallskip \\
  {}^tg_k^{10}:={}^t(0,0,0,{}^te^1,0,0)\,,\qquad &
  {}^tg_k^{11}:={}^t(0,0,0,{}^te^2,0,0)\,,\qquad &
  {}^tg_k^{12}:={}^t(0,0,0,{}^te^3,0,0)\, .
  \end{array} \]
We find that $ (\text{Ker} \, \cG_k)^\perp = \text{Vect} \ \{\mathfrak{g}_k^1 , \mathfrak{g}_k^2 \} $ where $ \mathfrak{g}^1_k $ and $ \mathfrak{g}^2_k $ are the following orthonormal vectors
\begin{equation}\label{mathfrakg}
\begin{array}{l}
  {}^t \mathfrak{g}_k^1 := \displaystyle \Big(\frac{\underline{b}^2_e}{\underline{a}^2_e} + \frac{\underline{b}^2_i}{\underline{a}^2_i} + |k|^2 \Big)^{-1/2}\
  {}^t\Big(\frac{\underline{b}_e}{ \underline{a}_e},-\frac{\underline{b}_i}{ \underline{a}_i},0,0,- {\rm i} \, |k| \, {}^t\mathbbm{e}^1_k,0\Big) \, , \smallskip \\
  {}^t \mathfrak{g}_k^2 := {}^t (0,0,0,0,0,{}^t \mathbbm{e}^1_k ) \, .
\end{array}
\end{equation}


\subsection{General formula for $ \cL_k $.}\label{subsec:genfor} Consider the linear map $ J : \mathbb \CC^{14} \rightarrow \mathbb \CC^{14} $ given by
\begin{equation}\label{defideJ}
J:= {\rm diag\,}(-1,-1,\Id_3, \Id_3, -\Id_3,\Id_3) \, .
\end{equation}

\begin{lemma}[Properties of $ J $]\label{propertiesofJ} The matrix $ J $ is such that
\begin{equation}
  \label{symeigen}
  J=J^*={}^t J \, , \qquad J^2 = \Id_{\RR^{14}} \, , \qquad
 \cL_k \, J = - J \, \cL_k \, , \qquad \forall k \in \ZZ^3 \,.
\end{equation}
\end{lemma}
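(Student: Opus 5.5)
The first three properties in \eqref{symeigen} come directly from the definition \eqref{defideJ}. The matrix $J$ is real and diagonal with entries $\pm 1$, so $J = {}^tJ = \bar J$, and therefore $J^* = {}^t\bar J = J$. Each diagonal entry squares to $1$, which gives $J^2 = \Id_{\RR^{14}}$.

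The only substantive point is the anticommutation $\cL_k J = -J\cL_k$. Since $J^2=\Id$, this is equivalent to $J\cL_k J = -\cL_k$. I will verify it block by block, using the $6\times 6$ block structure of $\cL_k$ in \eqref{DefLk}. Write $J = {\rm diag}(\sigma_1,\dots,\sigma_6)\otimes$ identity blocks with signs
\[ (\sigma_1,\dots,\sigma_6) = (-1,-1,+1,+1,-1,+1). \]
Then $(J\cL_k J)_{pq} = \sigma_p\sigma_q (\cL_k)_{pq}$. So it suffices to check that every nonzero block $(\cL_k)_{pq}$ sits at a position with $\sigma_p\sigma_q = -1$, that is, it connects the group $\{1,2,5\}$ (sign $-1$) to the group $\{3,4,6\}$ (sign $+1$).

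Reading off \eqref{DefLk}, the nonzero blocks sit at the following positions:
\begin{itemize}
\item $(1,3)$ and $(2,4)$, the divergence terms $-{\rm i}\underline a_s\,{}^tk$;
\item $(3,1)$ and $(4,2)$, the gradient terms $-{\rm i}\underline a_s k$;
\item $(3,5)$ and $(4,5)$, the terms $\mp\underline b_s{\rm I}_3$;
\item $(5,3)$ and $(5,4)$, the terms $\pm\underline b_s{\rm I}_3$;
\item $(5,6)$ and $(6,5)$, the curl terms ${\rm i}k\times$ and $-{\rm i}k\times$.
\end{itemize}
Each pair mixes one index from $\{1,2,5\}$ with one from $\{3,4,6\}$. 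This holds because $\cL_k$ has no diagonal blocks and no couplings within a group. Hence $\sigma_p\sigma_q=-1$ on every nonzero block, so $J\cL_kJ=-\cL_k$, and multiplying on the right by $J$ gives $\cL_k J = -J\cL_k$ for all $k\in\ZZ^3$, including $k=0$.

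As a cross-check, one can apply both sides to a generic $\cU$ using \eqref{cLkcU}. We have $J\cU = {}^t(-\varrho_e,-\varrho_i,u_e,u_i,-E,B)$. Applying $\cL_k$ flips the sign of every output component relative to $\cL_k\cU$ in the first two and fifth slots and preserves it in the others, up to the overall sign $-1$. This matches $-J\cL_k\cU$.

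There is no real obstacle here. The only care needed is bookkeeping: confirm that the sign pattern of $J$ separates the coupled variables into the $(\varrho_e,\varrho_i,E)$ and $(u_e,u_i,B)$ groups, so that $\cL_k$ is "off-diagonal" with respect to this splitting.
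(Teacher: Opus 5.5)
Your proof is correct and follows the paper's argument: the paper calls the first two identities obvious and says the anticommutation can be checked directly from \eqref{defideJ} and \eqref{cLkcU}, which is exactly your cross-check. Your block-sign bookkeeping (every nonzero block of $\cL_k$ couples $(\varrho_e,\varrho_i,E)$ to $(u_e,u_i,B)$, so $J\cL_kJ=-\cL_k$) is a tidier way of organizing that same direct verification.
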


\begin{proof} The first two identities are obvious. The third one can be checked directly from the definition \eqref{defideJ} and \eqref{cLkcU}.
\end{proof}

Thus, the (discrete) spectrum of $ {\rm i} \cL_k $ is a subset of the real line which is symmetric with respect to zero. Note that $ {}^t (0,0,0,0,0,{}^t e^1) \in \text{Ker} \, \cL_0 $, whereas $ {}^t (0,0,0,0,0,{}^t \mathbbm{e}^1_k) \in \text{Ker} \, \cL_k $ for all $ k \in \ZZ^3\setminus \{0\} $. It follows that $ 0 $ is always an eigenvalue of $ \cL_k $. We denote by $ \pm {\rm  i} \lambda_k^\jmath $ with $ \lambda_k^0 = 0 $ and $  \lambda_k^{\jmath} \in \RR_+^*  $ for $ \jmath \in \{ 1, \cdots, N_k \} $ the distinct eigenvalues of the matrix $ \cL_k $, which may be listed in increasing order according to
\begin{equation}\label{ordered}
- \lambda_k^{N_k} < \cdots < - \lambda_k^1 < \lambda_k^0 = 0 < \lambda_k^1 < \cdots < \lambda_k^{N_k} \, , \qquad 1 \leq N_k < 7 \, .
\end{equation}
Since $ J $ is invertible, the multiplicity of $ {\rm i} \lambda_k^{\jmath} $ is the same as the one of $ - {\rm i} \lambda_k^{\jmath} $. Define
\[ \mathscr{K}_k^{\jmath \pm} := \text{Ker} \, (\cL_k \mp {\rm i} \lambda^\jmath_k \, \Id ) \, , \qquad
M^\jmath_k := \text{dim} \, \mathscr{K}_k^{\jmath +} = \text{dim} \, \mathscr{K}_k^{\jmath -}  \, , \qquad \forall \, \jmath \in \{ 0,\cdots , N_k \} \, . \]
With these conventions, in particular, we have
\[ \mathscr{K}_k^0 := \text{Ker} \, \cL_k = \mathscr{K}_k^{0 +} = \mathscr{K}_k^{0 -} \, , \qquad M^0_k = \text{dim} \, \mathscr{K}_k^0 \, , \]
and by construction, we can assert that
\begin{equation}
  \label{sumMM}
 0 < M^\jmath_k \, , \qquad M^0_k + 2 \, (M^1_k + \cdots + M_k^{N_k}) = 14 \, .
\end{equation}
Let $ \{ r_k^{\jmath,l} \}_l $ with $ l \in \{ 1, \cdots ,M_k^\jmath \} $ be some  orthonormal basis of the eigenspace $ \mathscr{K}^{\jmath+}_k $.
We have
\[
\mathscr{K}_k^{\jmath -} = J  \mathscr{K}_k^{\jmath +} = \lbrace \, \cU \in \CC^{14} \, ; \, \cL_k \, \cU = - {\rm i} \lambda_k^\jmath \, \cU \, \rbrace = \text{Vect} \ \{ J  r_k^{\jmath,l} \}_{l = 1, \cdots, M_k^\jmath} \, .
\]
Observe that $ J $ preserves the Hermitian inner product
\[ \langle J\,\cU , J\,\cV \rangle = \langle \cU , \cV \rangle \, , \qquad \forall \, (\cU,\cV) \in \CC^{14} \times \CC^{14} \, . \]
Thus, the family $ \{ J  r_k^{\jmath,l} \}_l $ with $ l \in \{ 1, \cdots ,M_k^\jmath \} $ is an orthonormal basis of $ \mathscr{K}_k^{\jmath -} $. By construction
\begin{equation}\label{decompocU}
\cU = \sum_{\jmath=1}^{N_k} \sum_{l=1}^{M_k^\jmath} \, \big( {}^t (J \bar r^{\jmath,l}_k) \, \cU \big) \ J r^{\jmath,l}_k +
\sum_{l=1}^{M_k^0} \, \big({}^t \bar r^{0,l}_k  \, \cU \big) \ r^{0,l}_k +  \sum_{\jmath=1}^{N_k} \sum_{l=1}^{M_k^\jmath} \, \big({}^t \bar r^{\jmath,l}_k\,   \cU\big) \ r^{\jmath,l}_k \, ,
\end{equation}
as well as
\begin{equation}\label{decompocLkU}
\cL_k \, \cU = -  \sum_{\jmath=1}^{N_k} \sum_{l=1}^{M_k^\jmath} \, {\rm i} \, \lambda_k^\jmath \  \big( {}^t (J \bar r^{\jmath,l}_k)\,  \cU \bigr) \ J  r^{\jmath,l}_k +  \sum_{j=1}^{N_k} \sum_{l=1}^{M_k^\jmath} \, {\rm i}\, \lambda_k^\jmath \ \big({}^t \bar r^{\jmath,l}_k \, \cU\big) \ r^{\jmath,l}_k \, .
\end{equation}


\subsection{General formula for $ \PG_k \cL_k \PG_k $.}\label{subsec:genforPP} The introduction of $ \PG_k $ modifies the previous discussion. From \eqref{eqnGL0} and \eqref{decompocLkU}, we know that
\begin{equation} \label{sumalge}
\text{Im} \, \cL_k = \bigoplus_{\jmath = 1}^{N_k} \, ( \mathscr{K}^{\jmath -}_k \oplus \mathscr{K}^{\jmath +}_k ) \subset \text{Ker} \, \cG_k \, .
\end{equation}
Since the symmetric matrices $ {\rm i} \cL_k $ and $ \PG_k $ commute, they are simultaneously diagonalizable. In fact, from \eqref{sumalge}, we have
\[ {\cL_k}_{\mid \mathscr{K}^{\jmath \pm}_k} \equiv \pm {\rm i} \, \lambda^\jmath_k \ \Id_{\mid \mathscr{K}^{\jmath \pm}_k} \, , \quad {\PG_k}_{\mid \mathscr{K}^{\jmath \pm}_k} \equiv \Id_{\mid \cK^{\jmath \pm}_k} \quad \Longrightarrow \quad {\PG_k \cL_k \PG_k}_{\mid \mathscr{K}^{\jmath \pm}_k} \equiv \pm {\rm i} \, \lambda^\jmath_k \, \PG_k \, , \quad \forall \, \jmath \not = 0 \, . \]
For $ \jmath \not = 0 $, the complex number $ \pm {\rm i} \, \lambda^\jmath_k $ is therefore an eigenvalue of $ \PG_k \cL_k \PG_k $ with multiplicity $ M^\jmath_k $. For $ \jmath \not = 0 $, all eigenvectors $ r_k^{\jmath,l} $ fall inside $ \text{Ker} \, \cG_k $.

Now, the case $ \jmath = 0 $  should be considered separately because $ \mathscr{K}^0_k $ is not necessarily included inside $ \text{Ker} \, \cG_k $.
Since the codimension of $ \text{Ker} \, \cG_k $, i.e. the dimension of $( \text{Ker} \, \cG_k)^\perp$, is at most two, we can say that
\[
M^0_k-2 \leq \mathring{M}^0_k := \text{dim} \, \bigl( (\text{Ker} \, \cG_k) \cap \mathscr{K}^{0}_k \bigr)\leq M^0_k
\]
In other words, we can adjust the $ r_k^{0,l} $ in such a way that
\begin{equation}
  \label{decomprjktilde}
  r_k^{0,l} \in  \text{Ker} \, \cG_k \, , \qquad \forall \, l \in \{ 1,\cdots,\mathring{M}^0_k \} \, ,
\end{equation}
and, when $ \mathring{M}^0_k < M^0_k $, in such a way that
\begin{equation}
  \label{decomprjkg}
  r_k^{0,l} \in (\text{Ker} \, \cG_k)^\perp \, , \qquad \forall \, l \in \{ \mathring{M}^0_k + 1, \cdots , M^0_k \} \, .
\end{equation}
In the light of the foregoing, we can assert that
\begin{equation}\label{decompocUPP}
\begin{array}{rl}
\displaystyle e^{\pm \tau \PG_k \cL_k \PG_k} \, \PG_k \cU_k = \! \! \! & \displaystyle + \sum_{\jmath=1}^{N_k} \sum_{l=1}^{M_k^\jmath} \, e^{\mp {\rm i} \tau \lambda^\jmath_k} \, \big( {}^t (J \bar r^{\jmath,l}_k)  \, \cU_k \big) \ J r^{\jmath,l}_k +
\sum_{l=1}^{\mathring{M}_k^0} \, \big({}^t \bar r^{0,l}_k\,  \cU_k\big) \ r^{0,l}_k \\
& \displaystyle +  \sum_{\jmath=1}^{N_k} \sum_{l=1}^{M_k^\jmath} \, e^{\pm {\rm i} \tau \lambda^\jmath_k} \,  \big({}^t \bar r^{\jmath,l}_k \,  \cU_k\big) \ r^{\jmath,l}_k \, .
\end{array}
\end{equation}
The aim in what follows is to make explicit
\eqref{decompocUPP}. To this end, we have to compute $ N_k $, $  M^\jmath_k $ and $ \mathring{M}^0_k $. We have also to determine the eigenvalues $ {\rm i} \lambda_k^{\jmath} $ and orthonormal basis $ \{ r^{\jmath,l}_k \}_l $ of the eigenspaces $ \mathscr{K}^{\jmath +}_k $ adjusted (for $ \jmath = 0 $) as indicated in \eqref{decomprjktilde} and \eqref{decomprjkg}. In Subsection \ref{subsubsec:compze}, we deal with $ k = 0 $. In Subsection \ref{subsubsec:compzeknot0}, we discuss separately the situation $ k \not = 0 $.


\subsection{Some orthonormal basis made of  eigenvectors of $ \cL_0 $}\label{subsubsec:compze}
The case $ k=0 $ is somewhat specific as can be clearly inferred by looking at Subsection \ref{subsec:cGk}.


\subsubsection{Description of the eigenspace $ \mathscr{K}_0^0 = \text{\rm Ker} \, \cL_0 $}\label{subsubsec:K00}
The matrix $\cL_0$ is just
\begin{equation*}
  {\cL}_0 = \left( \begin{array}{cccccc}
   0 & 0 & 0 & 0 & 0 & 0 \\
   0 &  0 & 0 &  0 & 0 & 0 \\
     0 & 0 & 0 & 0 &  -\,\underline{b}_e\,{\rm I}_3 & 0 \\
   0 &   0 & 0 & 0  & \ \ \,\underline{b}_i\,{\rm I}_3 & 0 \\
    0 & 0 & \underline{b}_e\,{\rm I}_3 & -\,\underline{b}_i\,{\rm I}_3 & 0  & 0\\
    0 & 0 & 0 & 0 &  0 & 0 
  \end{array}  \right) .
\end{equation*}
Let $\cU $ be a solution of $ {\cL}_{0} \,\cU =0$.  This is equivalent to
\begin{equation*}
\underline{b}_e E=0\,, \qquad \underline{b}_i E=0\,, \qquad \underline{b}_e u_e - \underline{b}_i u_i=0\,.
\end{equation*}
From this, we obtain $u_e=\underline{b}_i/\underline{b}_e u_i$ and $E=0$, while $\varrho_e$, $\varrho_i$, $ u_i $ and $B$ are arbitrary. Hence, we have $ M^0_0 = 8 $. With $ \underline{b} $ as in \eqref{defunderlineb}, define
\begin{equation}\label{cestr010}
\begin{array}{ll}
  {}^t r_0^{0,1} := \! \!\! & (\underline{p}'_e +\underline{p}'_i)^{-1/2} \ ^t \Bigl(\sqrt{\underline{p}'_e}, \sqrt{\underline{p}'_i},0,0,0,0 \Bigr) = - {}^t J \bar r^{0,1}_0 = - {}^t J r^{0,1}_0 \, , \smallskip \\
  {}^tr_0^{0,2} := \! \!\! & \underline{b}^{-1} \
  {}^t\Big(0,0,\underline{b}_i \, {}^t e^1, \underline{b}_e \, {}^t e^1,0,0\Big) = {}^t J \bar r^{0,2}_0 \, = {}^t J r^{0,2}_0 \,, \smallskip  \\
  {}^tr_0^{0,3} := \! \!\! & \underline{b}^{-1} \
  {}^t\Big(0,0,\underline{b}_i \, {}^t e^2,\underline{b}_e \, {}^t e^2,0,0\Big) = {}^t J \bar r^{0,3}_0 \, = {}^t J r^{0,3}_0 \,, \smallskip  \\
  {}^tr_0^{0,4} := \! \!\! & \underline{b}^{-1} \
  {}^t\Big(0,0,\underline{b}_i {}^t e^3,\underline{b}_e\,  {}^t e^3,0,0\Big) =  {}^t J \bar r^{0,4}_0 \, = {}^t J  r^{0,4}_0 \,, \smallskip \\
  {}^tr_0^{0,5} := \! \!\! & {}^t(0,0,0,0,0,{}^t e^1) =  {}^t J \bar r^{0,5}_0 \, = {}^t J r^{0,5}_0 \,, \smallskip \\
  {}^tr_0^{0,6} := \! \!\! & {}^t(0,0,0,0,0,{}^t e^2) =  {}^t J \bar r^{0,6}_0 \, = {}^t J r^{0,6}_0 \,, \smallskip \\
  {}^tr_0^{0,7}:= \! \!\! & {}^t(0,0,0,0,0,{}^t e^3) =  {}^t J \bar r^{0,7}_0 \, = {}^t J r^{0,7}_0 \,.
  \end{array}
\end{equation}
The vectors $\{ r^{0,l}_0\}_{ l= 1,\ldots, 7} $ are all in $ \text{Ker} \, \cL_0 \cap \text{Ker} \, \cG_0 $, so that $ 7 \leq \mathring{M}^0_0 $. But $ \mathfrak{g}^1_0 \in {\rm Ker}\, \cL_0 \cap (\text{Ker} \, \cG_0)^\perp $, which implies that $ \mathring{M}^0_0 = 7 $. To recover \eqref{decomprjktilde} and \eqref{decomprjkg}, it suffices to take $ r^{0,8}_0 = \mathfrak{g}^1_0 $.


\subsubsection{Description of the eigenspaces $ \mathscr{K}_0^{\jmath +} = \text{\rm Ker} \, (\cL_0 - {\rm i} \lambda^\jmath_0 \, \Id ) $ for $ \jmath \not = 0 $}\label{subsubsec:k=0jmathnot0}
Since we have found that $M_0^0 =8$, we must have ${\rm dim} \, ({\rm Im}\, \cL_0)=14-8 =6$.  Let $\cU \not = 0 $ be an eigenvector of the matrix $\cL_0$ with corresponding non-zero eigenvalue ${\rm i} \, \lambda\neq 0$. The condition $\cL_0\, \cU = {\rm i} \, \lambda\, \cU$ is the same as
\begin{equation*}
{\rm i} \, \lambda \left(\begin{array}{c}
\varrho_e \\ \varrho_i \\ u_e \\ u_i \\ E \\ B
\end{array}\right)=
\left(\begin{array}{c}
0 \\ 0 \\
- \underline{b}_e E \\
\ \, \underline{b}_iE \\ \underline{b}_eu_e -\underline{b}_i u_i \\ 0
\end{array}\right)\,.
\end{equation*}
This means  that $ \varrho_e = \varrho_i = 0 $ and $ B = 0 $. Combining the third, fourth and fifth equations, we get that
\begin{equation}\label{comb*}
- \lambda^2E=\underline{b}_e {\rm i}\lambda u_e -\underline{b}_i {\rm i}\lambda u_i=-(\underline{b}_e^2 +\underline{b}_i^2)E \, , \qquad E \not = 0 \, .
\end{equation}
We cannot have $ E=0 $. Otherwise, we find $ u_e = u_i = 0 $ so that $ \cU = 0 $, which is a contradiction. Since $ E \not = 0 $, from \eqref{comb*}, we deduce that $ \lambda = \pm \lambda_0^1 $ with $ 0 < \lambda_0^1 = \underline{b} $. We find $ N_0 = 1 $ and from \eqref{sumMM} that $ M_0^1 = 3 $. We now look at $ \mathscr{K}^{1+}_0 $. Let $\cU={}^t(\varrho_e,\varrho_i,{}^tu_e,{}^tu_i,{}^tE,{}^tB)$ be an element of $ \mathscr{K}^{1+}_0 $, then the equation $\cL_0\, \cU-{\rm i}\lambda_0^1 \,\cU=0$ is equivalent to the following set of equations
\[
\left\{
\begin{array}{lll}
  -{\rm i} \underline{b}  \varrho_e = 0\,, \quad & -{\rm i} \underline{b}  \varrho_i=0\,, \quad & -{\rm i} \underline{b}  B=0 \,,\\
  -{\rm i} \underline{b}  u_e -\underline{b}_e E=0 \,, \quad & -{\rm i}\underline{b} u_i +\underline{b}_iE=0\,, & \quad
  \underline{b}_eu_e -\underline{b}_iu_i - {\rm i}\underline{b} E=0\,,
\end{array}
\right.
\]
where the sixth equation is automatically verified if the fourth and fifth equations are satisfied. The resolution of these equations gives
\begin{equation*}
  \varrho_e =\varrho_i= 0\,, \qquad B=0\,, \qquad \underline{b} \,  u_e={\rm i} \, \underline{b}_e \,  E\,,
  \qquad \underline{b} \,  u_i=- {\rm i} \,  \underline{b}_i \, \, E\, .
\end{equation*}
It suffices to select the following orthonormal basis of $ \mathscr{K}_0^{1+} = \PG_0 \mathscr{K}_0^{1+} $
\begin{equation}\label{cestlasuite}
\begin{array}{ll}
   {}^t r_0^{1,1} := & \! \! \! 1/(\sqrt{2} \, \underline{b} ) \ {}^t\big(0,0,{\rm i}\, \underline{b}_e \, {}^t e^1, -{\rm i}\, \underline{b}_i \, {}^te^1, \underline{b} \, {}^t e^1,0\big) = - {}^t J \bar r^{1,1}_0 \,, \smallskip \\
   {}^tr_0^{1,2} := & \! \! \! 1/(\sqrt{2} \, \underline{b} ) \ {}^t\big(0,0,{\rm i}\, \underline{b}_e \,  {}^te^2, -{\rm i}\, \underline{b}_i \,  {}^te^2,\underline{b} \, {}^t e^2,0\big) = - {}^t J \bar r^{1,2}_0 \,, \smallskip \\
   {}^tr_0^{1,3} := & \! \! \!  1/(\sqrt{2} \, \underline{b} ) \  {}^t\big(0,0,{\rm i} \, \underline{b}_e \, {}^te^3, -{\rm i} \,\underline{b}_i \,  {}^te^3,\underline{b} \, {}^t e^3,0\big) = - {}^t J \bar r^{1,3}_0 \,.
\end{array}
\end{equation}


\subsection{Some orthonormal basis made of  eigenvectors of $ \cL_k $ when $ k \not = 0 $}\label{subsubsec:compzeknot0}
The more direct way to compute the eigenvalues and the eigenvectors is to investigate the
equation $\cL_k \, \cU = {\rm i} \lambda\, \cU$ with $ \cU $ adequately  adjusted. In Paragraph \ref{subsec:desck}, we describe the content of $ \mathscr{K}_k^0 $.
In Paragraph \ref{para1}, we exhibit a first eigenvalue $ {\rm i} \lambda $ with $ \lambda $ positive, and we remark that it is of multiplicity two. In Paragraph \ref{para2}, we extract two other eigenvalues $ {\rm i} \lambda $ with $ \lambda $ positive, each of multiplicity one. We check that this list of eigenvalues is exhaustive. Then, in Paragraph \ref{para3}, we examine according to the values of $ \underline{a}_e $ and $ \underline{a}_i $ how these three non-zero eigenvalues remain distinct and well-ordered uniformly with respect to large values of $ \vert k \vert $. In Paragraph \ref{para4}, we compute the longitudinal eigenvectors. 
In Paragraph \ref{summarytable} (to which the reader is subsequently referred), we
summarize the spectral results that have been obtained.


\subsubsection{Description of $ \mathscr{K}_k^0 $ when $ k \not = 0 $} \label{subsec:desck} Let $\cU={}^t(\varrho_e,\varrho_i,{}^tu_e,{}^tu_i,{}^tE,{}^tB)$ be a solution of $ {\cL}_{k} \,\cU =0$.  This  is equivalent to the following set of equations
\[\begin{array}{lll}
\mathbbm{e}^1\cdot u_e=0\,, \quad & \mathbbm{e}^1\cdot u_i=0\,, \quad & \mathbbm{e}^1\times E =0\,,\\
-{\rm i}\underline{a}_e\varrho_e |k|\mathbbm{e}^1-  \underline{b}_e E=0\,, \quad &
  -{\rm i}\underline{a}_i\varrho_i|k|\mathbbm{e}^1+  \underline{b}_i E=0\,, \quad
 & (\underline{b}_eu_e-\underline{b}_iu_i)+{\rm i}|k|\mathbbm{e}^1\times B=0\,.
\end{array}
\]
The first two equations are equivalent to set $ u_e=\alpha_e \, \mathbbm{e}^2 + \beta_e \, \mathbbm{e}^3 $ and $ u_i=\alpha_i \, \mathbbm{e}^2 + \beta_i \,  \mathbbm{e}^3 $ where $\alpha_s $ and $\beta_s$, with $s\in\{e,i\}$, are arbitrary complex numbers (recall that $ \mathbbm{e}^j \equiv \mathbbm{e}^j_k $). The fourth and the fifth equations can be combined to obtain
\begin{equation*}
  \varrho_e=- \frac{\underline{b}_e \, \underline{a}_i}{\underline{b}_i \,  \underline{a}_e} \varrho_i\,,\qquad
  E= {\rm i} \, \frac{|k|}{2} \, \bigg(\frac{\underline{a}_i}{\underline{b}_i}\varrho_i- \frac{\underline{a}_e}{\underline{b}_e}\varrho_e\bigg) \, \mathbbm{e}^1= {\rm i} |k| \frac{\underline{a}_i}{\underline{b}_i} \varrho_i \,  \mathbbm{e}^1\,,
\end{equation*}
while the third equation $\mathbbm{e}^1\times E=0$ is trivialy satisfied. Substituting the expressions of $u_e$ and $u_i$ in the sixth equation we extract
\begin{equation}
  \label{eqn_e1xR}
  -  \alpha_e\underline{b}_e \mathbbm{e}^3 +\beta_e\underline{b}_e  \mathbbm{e}^2  +\alpha_i\underline{b}_i  \mathbbm{e}^3 -\beta_i\underline{b}_i  \mathbbm{e}^2 + {\rm i}|k| \, B_\perp =0\,.
\end{equation}
From \eqref{eqn_e1xR} we deduce that there exists a real number $\gamma$ such that
\begin{equation*}
 B= -  {\rm i}\alpha_e\underline{b}_e |k|^{-1}\mathbbm{e}^3 +{\rm i}\beta_e\underline{b}_e |k|^{-1} \mathbbm{e}^2  + {\rm i}\alpha_i\underline{b}_i  |k|^{-1}\mathbbm{e}^3 -{\rm i}\beta_i\underline{b}_i |k|^{-1} \mathbbm{e}^2 +{\rm i} \gamma |k|^{-1}\mathbbm{e}^1\,.
\end{equation*}
There are six degrees of freedom corresponding to the choice of $ \varrho_i $, $ \alpha_e $, $ \alpha_i $, $ \beta_e $, $ \beta_i $ and $ \gamma $. This means that $ M_k^0=6 $. Define
\begin{equation}\label{cK0knot0}
\quad \begin{array}{rl}
  {}^t r_k^{0,1} := \! \! \! & (1/\underline{b})
  \ {}^t\big(0,0, \underline{b}_i \, {}^t \mathbbm{e}^2, \underline{b}_e \, {}^t \mathbbm{e}^2,0, 0\big) = - {}^t J \bar r^{0,1}_{-k} \, = {}^t J  r^{0,1}_{k} = -\bar r^{0,1}_{-k} \, , \medskip \\
  {}^t r_k^{0,2} := \! \! \! &  (1/\underline{b})
  \ {}^t\big(0,0, \underline{b}_i \, {}^t \mathbbm{e}^3, \underline{b}_e \, {}^t \mathbbm{e}^3,0, 0\big) = + {}^t J \bar r^{0,2}_{-k} \, = {}^t J  r^{0,2}_{k} = +\bar r^{0,2}_{-k} \,, \medskip\\
  {}^t r_k^{0,3} := \! \! \! &  (1/\underline{b}) \, (\underline{b}^2  + |k|^{2})^{-1/2}  \ {}^t\big(0,0,|k| \, \underline{b}_e \, {}^t \mathbbm{e}^2,-|k| \, \underline{b}_i \, {}^t \mathbbm{e}^2, 0, -{\rm i }\underline{b}^2 \,{}^t \mathbbm{e}^3\big) \smallskip \\
  = \! \! \! & - {}^t J \bar r^{0,3}_{-k} \, = {}^t J r^{0,3}_{k} = -\bar r^{0,3}_{-k} \,, \medskip\\
  {}^t r_k^{0,4} := \! \! \! &  (1/\underline{b}) \, (\underline{b}^2 + |k|^{2})^{-1/2}  \ {}^t\big(0,0,|k| \, \underline{b}_e \, {}^t \mathbbm{e}^3,-|k| \, \underline{b}_i \, {}^t \mathbbm{e}^3, 0, + {\rm i }\underline{b}^2 \,{}^t \mathbbm{e}^2 \big) \smallskip \\
  = \! \! \! & + {}^t J \bar r^{0,4}_{-k} \, = {}^t J r^{0,4}_{k} =+\bar r^{0,4}_{-k} \,.
\end{array}
\end{equation}
Retain that
\begin{equation}\label{cK0knot0bis}
 {}^t J r^{0,l}_{k} = r^{0,l}_{k} \, , \qquad
 {}^t J \bar r^{0,l}_{k} = \bar r^{0,l}_{k} \, , \qquad \forall l \in \{ 1,2,3,4\} \, , \qquad \forall k \in \ZZ^3 \setminus \{ 0 \} \, .
\end{equation}
We can check that $ \{ r^{0,l}_k \}_{l = 1,\cdots,4}$ is an orthonormal basis of $ {\rm Ker\,} \cL_k \cap {\rm Ker\,} \cG_k $. Furthermore, coming back to \eqref{mathfrakg}, we find that $ r^{0,5}_k := \mathfrak{g}^1_k $ and $ r^{0,6}_k := \mathfrak{g}^2_k $ are in $ {\rm Ker\,} \cL_k \cap ({\rm Ker\,} \cG_k)^\perp $, so that $ M_k^0 = 6 $ and $ \mathring{M}_k^0 = 4 $.


\subsubsection{Non-zero eigenvalues corresponding to eigenvectors pointing in the direction orthogonal to $ \mathbbm{e}^1$}\label{para1} In this paragraph, we consider {\it transverse waves} Recall the notation \eqref{deroulu}. We can test
the equation $\cL_k \, \cU = {\rm i} \lambda\, \cU$ with
\[ \cU={}^t(0,0, \mathbbm{u}_{e3} \, {}^t\mathbbm{e}^3, \mathbbm{u}_{i3} \, {}^t\mathbbm{e}^3, \mathbbm{E}_3 \, {}^t\mathbbm{e}^3,\mathbbm{B}_2 \, {}^t\mathbbm{e}^2) \, , \qquad (\mathbbm{u}_{e3},\mathbbm{u}_{i3},\mathbbm{E}_3,\mathbbm{B}_2) \in \CC^4
\, .
\]
This is equivalent to solve the following set of equations
\begin{equation}\label{eigene1}
{\rm i} \, \lambda \left(\begin{array}{c}
0 \\
0 \\
\mathbbm{u}_{e3} \, \mathbbm{e}^3 \\
\mathbbm{u}_{i3} \, \mathbbm{e}^3 \\
\mathbbm{E}_3 \, \mathbbm{e}^3 \\
\mathbbm{B}_2 \, \mathbbm{e}^2
\end{array}\right)=
\left(\begin{array}{c}
  0 \\
  0 \\
  - \mathbbm{E}_3 \, \underline{b}_e \, \mathbbm{e}^3 \\
  + \mathbbm{E}_3 \, \underline{b}_i \,  \mathbbm{e}^3 \\
  (\mathbbm{u}_{e3} \, \underline{b}_e -\mathbbm{u}_{i3} \, \underline{b}_i + {\rm i} \, \mathbbm{B}_2 \, |k|) \, \mathbbm{e}^3 \\
  {\rm i} \, \mathbbm{E}_3 \, |k| \, \mathbbm{e}^2
\end{array}\right)\,.
\end{equation}
We multiply the fifth equation by $ {\rm i} \, \lambda $ and we exploit the other relations to obtain
\begin{equation*}
- \lambda^2 \, \mathbbm{E}_3= \underline{b}_e \, ({\rm i} \, \lambda \, \mathbbm{u}_{e3}) -  \underline{b}_i ({\rm i} \, \lambda \, \mathbbm{u}_{i3}) - |k| \, (\lambda \, \mathbbm{B}_2) =-(\underline{b}^2 + |k|^2) \, \mathbbm{E}_3\, .
\end{equation*}
There is no non-zero solution $ \cU \not = 0 $ to
\eqref{eigene1} with $ \mathbbm{E}_3= 0 $. Thus, we must have (for some $ \jmath $)
\begin{equation}\label{lambdafisrt}
  \lambda_k^\jmath = \lambda_{\perp k} := \sqrt{\underline{b}^2+|k|^2}\,.
\end{equation}
Coming back to the equation $\cL_k \, \cU = {\rm i} \, \lambda \, \cU $,  we can express all components of $ \cU $ in terms of the coefficient $ \mathbbm{u}_{e3} $. This leads to the eigenvector associated to $\lambda_{\perp k} $, which is
\begin{equation}\label{cK0kperp1}
r^{\jmath,1}_k = r^{1}_{\perp k} := \frac{1}{\sqrt{2(\underline{b}^2+|k|^2)}}   \left(\begin{array}{c}
  0 \\ 0 \\ \ \ \, \underline{b}_e \,  \mathbbm{e}^3 \\ -\underline{b}_i \,  \mathbbm{e}^3\\
  - {\rm i}\sqrt{\underline{b}^2+|k|^2} \ \mathbbm{e}^3 \\ -{\rm i} \, |k| \, \mathbbm{e}^2\,
\end{array}\right) = + J \bar r^{\jmath,1}_{-k} \,.
\end{equation}
We can do the same with
\[ \cU={}^t(0,0, \mathbbm{u}_{e2} \, {}^t\mathbbm{e}^2, \mathbbm{u}_{i2} \, {}^t\mathbbm{e}^2, \mathbbm{E}_2 \, {}^t\mathbbm{e}^2,\mathbbm{B}_3 \, {}^t\mathbbm{e}^3) \, , \qquad (\mathbbm{u}_{e2},\mathbbm{u}_{i2},\mathbbm{E}_2,\mathbbm{B}_3) \in \CC^4
\, .
\]
Again, the equation $\cL_k \, \cU = {\rm i} \, \lambda\, \cU$ gives rise to \eqref{lambdafisrt}. But this time the eigenvector associated to $\lambda_{\perp k} $ is given by
\begin{equation}\label{cK0kperp2}
r^{\jmath,2}_k = r^{2}_{\perp k} := \frac{1}{\sqrt{2(\underline{b}^2+|k|^2)}}   \left(\begin{array}{c}
  0 \\ 0 \\ \ \ \, \underline{b}_e \,  \mathbbm{e}^2 \\ -\underline{b}_i \,  \mathbbm{e}^2 \\
  - {\rm i} \, \sqrt{\underline{b}^2+|k|^2} \ \mathbbm{e}^2 \\
  {\rm i} \, |k|\, \mathbbm{e}^3\,
\end{array}\right) = - J \bar r^{\jmath,2}_{-k} \,.
\end{equation}
It is clear that the two unit vectors $ r^{1}_{\perp k} $ and $ r^{2}_{\perp k}  $ are orthogonal, so that $ M^\jmath_k \geq 2 $.


\subsubsection{Non-zero eigenvalues corresponding to
eigenvectors which are pointing in the direction parallel to $ \mathbbm{e}^1$}\label{para2} We consider here {\it longitudinal waves}. More precisely, we look for eigenvectors $ \cU $ having the form
\[ \cU={}^t(\varrho_e,\varrho_i, \mathbbm{u}_{e1} \, {}^t\mathbbm{e}^1, \mathbbm{u}_{i1} \, {}^t\mathbbm{e}^1, \mathbbm{E}_1 \, {}^t\mathbbm{e}^1, 0) \, , \qquad (\varrho_e,\varrho_i, \mathbbm{u}_{e1} , \mathbbm{u}_{i1} , \mathbbm{E}_1 ) \in \CC^5 \, . \]
This means to consider the condition
\begin{equation}\label{debuteql}
{\rm i} \, \lambda \left(\begin{array}{c}
\varrho_e \\
\varrho_i \\
\mathbbm{u}_{e1} \, \mathbbm{e}^1 \\
\mathbbm{u}_{i1} \, \mathbbm{e}^1 \\
\mathbbm{E}_1 \, \mathbbm{e}^1 \\
0
\end{array}\right)=
\left(\begin{array}{c}
  -{\rm i} \, \mathbbm{u}_{e1} \, \underline{a}_e \, |k|\\ -{\rm i} \, \mathbbm{u}_{i1} \, \underline{a}_i \, |k| \\  (-{\rm i} \, \varrho_e \, \underline{a}_e |k|  - \mathbbm{E}_1 \, \underline{b}_e ) \mathbbm{e}^1 \\
  ( -{\rm i} \, \varrho_i \, \underline{a}_i \, |k|   + \mathbbm{E}_1 \,  \underline{b}_i) \, \mathbbm{e}^1  \\
  (\mathbbm{u}_{e1} \, \underline{b}_e -\mathbbm{u}_{i1} \, \underline{b}_i) \, \mathbbm{e}^1 \\
  0
\end{array}\right)\,.
\end{equation}
We multiply the third and fourth  equations by $ - {\rm i} \, \lambda $ and we exploit the other relations to obtain
\begin{align*}
& \lambda^2 \, \mathbbm{u}_{e1} = -(\lambda \, \varrho_e) \, \underline{a}_e \, |k| + {\rm i} \, (\lambda \, \mathbbm{E}_1) \, \underline{b}_e = ( \underline{b}_e^2 + \underline{a}_e^2 \, |k|^2) \, \mathbbm{u}_{e1} -  \underline{b}_e \, \underline{b}_i \,  \mathbbm{u}_{i1} \,, \\
&\lambda^2 \, \mathbbm{u}_{i1} = -(\lambda \, \varrho_i) \, \underline{a}_i \, |k| - {\rm i} \, (\lambda \, \mathbbm{E}_1) \, \underline{b}_i = ( \underline{b}_i^2 + \underline{a}_i^2 \, |k|^2) \, \mathbbm{u}_{i1} -  \underline{b}_e \, \underline{b}_i \,  \mathbbm{u}_{e1} \, .
\end{align*}
There is a solution with $ (\mathbbm{u}_{e1},\mathbbm{u}_{i1})\not = (0,0)$ if and only if
\begin{equation*}
  (\lambda^2 - \underline{c}_{ek}^2) \, (\lambda^2 - \underline{c}_{ik}^2) - \underline{b}_e^2 \, \underline{b}_i^2 = 0 \,, \qquad \underline{c}_{sk} := (\underline{b}_s^2 + \underline{a}_s^2 \, |k|^2)^{1/2} \,, \qquad s\in \{e,i\} \, .
\end{equation*}
With $ \Lambda := \lambda^2 $, this is the same as imposing
\begin{equation*}
  P(\Lambda) = 0 \, , \qquad P(\Lambda) := \Lambda^2 - (\underline{c}_{ek}^2+ \underline{c}_{ik}^2) \,  \Lambda + \underline{c}_{ek}^2 \, \underline{c}_{ik}^2 -\underline{b}_e^2 \, \underline{b}_i^2 \, .
\end{equation*}
Since $ \underline{c}_{ek} \, \underline{c}_{ik} > \underline{b}_{e} \, \underline{b}_{i} $, the two roots $ \Lambda^l $ and $ \Lambda^r $ are positive and distinct, say with $ \Lambda^l < \Lambda^r $ (where $ l $ is for left and $ r $ is for right). By this way, we can identify two  eigenvalues $ {\rm i} \lambda_{\mypar k}^l $ and $ {\rm i} \lambda_{\mypar k}^r $ satisfying $ 0 < \lambda_{\mypar k}^l < \lambda_{\mypar k}^r $, and given by
\begin{equation} \label{biglambdas}
 \begin{array}{l}
\displaystyle 0 < \Lambda^l := \frac{1}{2} \, \Bigl(\underline{c}_{ek}^2+\underline{c}_{ik}^2 - \sqrt{ (\underline{c}_{ek}^2-\underline{c}_{ik}^2)^2 + 4 \, \underline{b}_e^2 \, \underline{b}_i^2 } \Bigr) \, , \qquad \lambda_{\mypar k}^l:=\sqrt{\Lambda^l} \, , \medskip \\
\displaystyle 0 < \Lambda^r := \frac{1}{2} \, \Bigl(\underline{c}_{ek}^2+ \underline{c}_{ik}^2+ \sqrt{ (\underline{c}_{ek}^2-c_{ik}^2)^2 + 4 \, \underline{b}_e^2 \, \underline{b}_i^2 } \Bigr) \, , \qquad \lambda_{\mypar k}^r:= \sqrt{\Lambda^r}\, .
 \end{array}
\end{equation}


\subsubsection{Separation of the eigenvalues}\label{para3} Before proceeding further, the (absolute value of the) three eigenvalues must be sorted in ascending order.
This can be done for large values of $ \vert k \vert $ uniformly with respect to $ k $ as indicated below.

\begin{lemma}[Classification of eigenvalues for large values of $ \vert k \vert $]\label{Classification} There exists a positive lower bound $ m $ such that, for all $ \vert k \vert \geq m $, we have:
\begin{itemize}
 \item [i)] When $ \underline{a}_e \leq 1 $ and $ \underline{a}_i \leq 1 $ together with $ (\underline{a}_e,\underline{a}_i) \not = (1,1) $:
 \begin{equation}\label{alternat1}
  \lambda^1_k = \lambda_{\mypar k}^l < \lambda^2_k = \lambda_{\mypar k}^r < \lambda^3_k = \lambda_{\perp k} \, , \qquad (M^1_k ,M^2_k,M^3_k) = (1,1,2) \, .
 \end{equation}
 \item [ii)] When $ \underline{a}_e > 1 $ and  $ \underline{a}_i \leq 1 $, or when $ \underline{a}_e \leq 1 $ and $ \underline{a}_i > 1 $:
 \begin{equation}\label{alternat2}
  \lambda^1_k = \lambda_{\mypar k}^l < \lambda^2_k = \lambda_{\perp k}
    < \lambda^3_k = \lambda_{\mypar k}^r
  \, , \qquad (M^1_k ,M^2_k,M^3_k) = (1,2,1)\, .
 \end{equation}
 \item [iii)] When $ \underline{a}_e > 1 $ and $  \underline{a}_i > 1 $:
 \begin{equation}\label{alternat3}
  \lambda^1_k = \lambda_{\perp k}
  < \lambda^2_k =  \lambda_{\mypar k}^l < \lambda^3_k = \lambda_{\mypar k}^r \, , \qquad (M^1_k ,M^2_k,M^3_k) = (2,1,1)\, .
 \end{equation}
 \item [iv)] When $ \underline{a}_e = \underline{a}_i = 1 $, there remains only two eigenvalues ($ \lambda_{\mypar k}^r $ coincides with $\lambda_{\perp k} $)
 \begin{equation}\label{alternat4}
  \lambda^1_k = \lambda_{\mypar k}^l =|k|
  < \lambda^2_k = \lambda_{\mypar k}^r =\lambda_{\perp k} \, , \qquad (M^1_k ,M^2_k) = (1,3)\, .
 \end{equation}
\end{itemize}
\end{lemma}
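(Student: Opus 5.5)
The plan is to compare the three quantities $\lambda_{\perp k}^2 = \underline{b}^2+|k|^2$, $\Lambda^l$ and $\Lambda^r$ by expanding $\Lambda^l,\Lambda^r$ for large $|k|$. Two facts come first. Since the spectrum of ${\rm i}\cL_k$ is exhausted by $0$ (multiplicity $6$ from Paragraph \ref{subsec:desck}), $\pm\lambda_{\perp k}$ (multiplicity at least $2$) and $\pm\lambda^l_{\mypar k},\pm\lambda^r_{\mypar k}$ (multiplicity at least $1$), the count \eqref{sumMM} gives $6+2(2+1+1)=14$. So, as soon as the three positive values are distinct, the multiplicities are exactly $2,1,1$, and ordering them yields the tuples $(M^1_k,M^2_k,M^3_k)$ in i)–iii). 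When two values coincide, as in iv), the multiplicities add up.

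For the ordering, I will use the sign of $P$ at $\Lambda_\perp:=\underline{b}^2+|k|^2$. Because $P$ is a monic quadratic with roots $\Lambda^l<\Lambda^r$, we have $P(\Lambda_\perp)<0$ iff $\Lambda^l<\Lambda_\perp<\Lambda^r$. If instead $P(\Lambda_\perp)>0$, the position of $\Lambda_\perp$ relative to the midpoint $(\underline{c}_{ek}^2+\underline{c}_{ik}^2)/2$ decides between the two remaining cases. Writing $\underline{c}_{sk}^2-\Lambda_\perp=(\underline{a}_s^2-1)|k|^2+\underline{b}_s^2-\underline{b}^2$, one gets
\[
P(\Lambda_\perp)=\bigl((\underline{a}_e^2-1)|k|^2-\underline{b}_i^2\bigr)\bigl((\underline{a}_i^2-1)|k|^2-\underline{b}_e^2\bigr)-\underline{b}_e^2\underline{b}_i^2 .
\]
This is the key explicit expression.

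The cases then read off for $|k|$ large.

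\emph{Case ii).} Here $P(\Lambda_\perp)\sim(\underline{a}_e^2-1)(\underline{a}_i^2-1)|k|^4<0$ when both factors are nonzero with opposite signs. In the subcase where $\underline{a}_e>1$ and $\underline{a}_i=1$, we get $P(\Lambda_\perp)=-\underline{b}_e^2(\underline{a}_e^2-1)|k|^2<0$, and the symmetric subcase is the same. This gives \eqref{alternat2}.

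\emph{Case iii).} Both factors are positive, so $P(\Lambda_\perp)>0$. Moreover $\Lambda_\perp$ lies below the midpoint, since $(\underline{c}_{ek}^2+\underline{c}_{ik}^2)/2-\Lambda_\perp\sim\frac12(\underline{a}_e^2+\underline{a}_i^2-2)|k|^2>0$. Hence $\Lambda_\perp<\Lambda^l$.

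\emph{Case i) with both $\underline{a}_s<1$.} We get $P(\Lambda_\perp)>0$ and $\Lambda_\perp$ above the midpoint, so $\Lambda^r<\Lambda_\perp$.

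\emph{Case i) when exactly one $\underline{a}_s=1$.} Say $\underline{a}_e=1>\underline{a}_i$. Then $P(\Lambda_\perp)=\underline{b}_i^2(1-\underline{a}_i^2)|k|^2>0$, and the midpoint comparison $\sim\frac12(1-\underline{a}_i^2)|k|^2>0$ still places $\Lambda_\perp$ above $\Lambda^r$.

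\emph{Case iv).} Here $\underline{a}_e=\underline{a}_i=1$ gives $\underline{c}_{sk}^2=\underline{b}_s^2+|k|^2$, so $P(\Lambda)=(\Lambda-|k|^2)(\Lambda-|k|^2-\underline{b}^2)$. Thus $\Lambda^l=|k|^2$ and $\Lambda^r=\Lambda_\perp$ exactly, for every $k$, and the multiplicities become $(1,3)$.

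In all cases the gap between the roots is $\sqrt{(\underline{c}_{ek}^2-\underline{c}_{ik}^2)^2+4\underline{b}_e^2\underline{b}_i^2}>0$, so $\Lambda^l<\Lambda^r$ always holds; only the position of $\Lambda_\perp$ needs the threshold $m$. The threshold $m$ is taken as the maximum of the finitely many explicit thresholds beyond which the leading terms above dominate the lower-order terms. The borderline subcases of i) and ii), where one $\underline{a}_s$ equals $1$, are the main obstacle: there the $|k|^4$ term vanishes, and I must check that the $|k|^2$ coefficient has the correct sign rather than being cancelled. The factorised form of $P(\Lambda_\perp)$ above is what makes this check immediate.
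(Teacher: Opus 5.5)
Your proposal is correct and follows essentially the paper's own argument. You evaluate $P$ at $\lambda_{\perp k}^2=\underline{b}^2+|k|^2$, use its sign and then the position relative to the midpoint (which is exactly the sign of $P'\bigl(\lambda_{\perp k}^2\bigr)$ used in the paper) to order the roots, and conclude on multiplicities from $M^0_k=6$ and \eqref{sumMM}; your explicit handling of the borderline subcases with one $\underline{a}_s=1$, which the paper's condition $(1-\underline{a}_e)(1-\underline{a}_i)<0$ does not literally cover, and your factorization of $P$ in case iv) are useful additions.
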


In accordance with the above orderings,  when $ \vert k \vert \rightarrow +\infty $, we find that
\begin{equation}\label{equi+}
\lambda_{\mypar k}^l \underset{+\infty}{\sim} \min \, (\underline{a}_e ;\underline{a}_i) \ \vert k \vert \, , \qquad
\lambda_{\mypar k}^r \underset{+\infty}{\sim} \max \, (\underline{a}_e ;\underline{a}_i) \ \vert k \vert \, , \qquad \lambda_{\perp k} \underset{+\infty}{\sim} \vert k \vert \, .
\end{equation}
Resonances are more pronounced as an infinite number of frequencies $ k $ are involved. It is always the case here since $ \lambda^\jmath_k - \lambda^\jmath_k = \lambda^0_k = 0 $ for all $ k \in \ZZ^3 $. The interest of Lemma \ref{Classification} is to allow a uniform ordering of the waves when $ \vert k \vert $ is large enough. There is no crossing of eigenvalues as soon as $ \vert k \vert \geq m $, while this can happen for bounded values of $ \vert k \vert $.

\begin{proof} The last case iv), when $ \underline{a}_e = \underline{a}_i = 1 $, must be treated separately, with direct explicit computations. We focus below on the (three) other situations.\\
With $ \lambda_k^\jmath $ as in \eqref{lambdafisrt}, it suffices to compare $ (\lambda^\jmath_k)^2 $ with $ \Lambda_l $ and $ \Lambda_r $. To this end, compute
\[ \begin{array}{l}
    P\bigl((\lambda^\jmath_k)^2 \bigr) = |k|^2 \ \bigl \lbrack (1-\underline{a}^2_e) \, (1-\underline{a}^2_i) \, |k|^2 + (1-\underline{a}^2_e) \, \underline{b}^2_e + (1-\underline{a}^2_i) \, \underline{b}^2_i \bigr \rbrack \, , \medskip \\
    P'\bigl((\lambda^\jmath_k)^2 \bigr) = \underline{b}^2 + ( 2 - \underline{a}^2_e - \underline{a}^2_i ) \ \vert k \vert^2 \, .
   \end{array}
\]
The condition $ P\bigl((\lambda^\jmath_k)^2 \bigr) < 0 $ means that $ \lambda^\jmath_k $ lies between $ \lambda_{\mypar k}^l :=\sqrt{\Lambda^l} $ and $ \lambda_{\mypar k}^r :=\sqrt{\Lambda^r} $. This is sure to happen when $ (1-\underline{a}_e) (1-\underline{a}_i) < 0 $ and for all $|k|>m$, and a value of $m$ large enough. This leads to the case ii) and \eqref{alternat2}. When $ P\bigl((\lambda^\jmath_k)^2 \bigr) > 0 $, a criterion to decide whether $ \lambda^\jmath_k $ is on the left of $ \lambda_{\mypar k}^l $ or on the right of $ \lambda_{\mypar k}^r $ is the negative or positive sign of $ P'\bigl((\lambda^\jmath_k)^2 \bigr) $. This is consistent with \eqref{alternat1} and \eqref{alternat3}.\\
Since $ M^0_k = 6 $, from \eqref{sumMM}, we can  deduce that $ M^1_k + \cdots + M_k^{N_k} = 4 $. But we have seen that the multiplicity of the square root of $ \underline{b}^2+|k|^2 $ is greater than $ 2 $, whereas $ M^\jmath_k \geq 1 $. Thus, we must have $ N_k \leq 3 $. In the presence of three distinct eigenvalues such that $ \lambda^1_k  < \lambda^2_k < \lambda^3_k $, the multiplicity of $ {\rm i} \lambda_{\mypar k}^l $ and $ {\rm i}  \lambda_{\mypar k}^r $ is exactly $ 1 $ with the repartition  \eqref{alternat1},  \eqref{alternat2} and  \eqref{alternat3}.
\end{proof}

On the opposite side, it is interesting to investigate what happens for small values of $ \vert k \vert $. For $ \vert k \vert \sim 0  $, we find that
\[
\Lambda^l = \frac{\underline{b}^2_i \, \underline{a}^2_e + \underline{b}^2_e \, \underline{a}^2_i}{\underline{b}^2_e +  \underline{b}^2_i} \ \vert k \vert^2 + O ( \vert k \vert^4 ) < \Lambda^r = \underline{b}^2 + \frac{\underline{b}^2_e \, \underline{a}^2_e + \underline{b}^2_i \, \underline{a}^2_i}{\underline{b}^2_e +  \underline{b}^2_i} \ \vert k \vert^2 + O ( \vert k \vert^4 ) \, .
\]
Thus, the extension of the preceding formulas to the limiting case $ k = 0 $ gives rise to
\[
\lim_{\vert k \vert \rightarrow 0} \, \lambda_{\mypar k}^l = \lim_{\vert k \vert \rightarrow 0} \, \sqrt {\Lambda^l} = 0 \, , \quad \lim_{\vert k \vert \rightarrow 0} \, \lambda_{\mypar k}^r =\lim_{\vert k \vert \rightarrow 0} \, \sqrt {\Lambda^r}
= \underline{b} \, , \quad  \lim_{\vert k \vert \rightarrow 0} \,  \lambda_{\perp k} = \lim_{\vert k \vert \rightarrow 0} \, \sqrt{\underline{b}^2+|k|^2} = \underline{b} \, .
\]
Then $\lambda_{ k}^1=\lambda_{\mypar k}^l$, and $(\lambda_{ k}^2,\lambda_{ k}^3)=(\lambda_{\perp k}, \lambda_{\mypar k}^r)$ or $(\lambda_{ k}^2,\lambda_{ k}^3)= (\lambda_{\mypar k}^r, \lambda_{\perp k})$. Moreover, this means that the eigenvalues $ 0 $ and $ {\rm i} \underline{b} $ found for $ k=0 $ must be associated after perturbation (near $ \vert k \vert = 0 $) respectively with the couples  $ (0,{\rm i} \lambda_{\mypar k}^l ) $ $ \bigl($or $(0,{\rm i} \lambda_k^1)\, \bigr) $ and $ ({\rm i} \lambda_{\mypar k}^r, {\rm i} \lambda_{\perp k}) $ $ \bigl($or $({\rm i} \lambda_k^2,{\rm i} \lambda_k^3)$, or else  simply $ {\rm i} \lambda_k^2= {\rm i} \lambda_{\perp k} $ when $ {\rm i} \lambda_{\mypar k}^r = {\rm i} \lambda_{\perp k} \, \bigr) $.


\subsubsection{The two longitudinal eigenvectors}\label{para4}
To complete the discussion, we have to make explicit the eigenvectors $ r_{\mypar k}^l $ and $ r_{\mypar k}^r $ associated respectively with $ \lambda_{\mypar k}^l $ and $ \lambda_{\mypar k}^r $. To this end, we come back to \eqref{debuteql}. The relation \eqref{debuteql} can be used to  express all components of  $ \cU $ in terms of the coefficient $ \mathbbm{u}_{e1} $ only. Indeed, we have
\begin{align*}
  & \varrho_e =- \frac{\underline{a}_e \, |k|}{\lambda} \, \mathbbm{u}_{e1} \,,\qquad \quad \ \varrho_i = -\frac{\underline{a}_i \, |k|}{\lambda} \, \mathbbm{u}_{i1} =  \frac{\underline{a}_i \, |k|}{\lambda} \,  \frac{\lambda^2 - \underline{c}_{ek}^2}{\underline{b}_e \, \underline{b}_i} \, \mathbbm{u}_{e1} \,,\\
  & \mathbbm{u}_{i1} =- \frac{\lambda^2 - \underline{c}_{ek}^2}{\underline{b}_e \, \underline{b}_i} \,  \mathbbm{u}_{e1} \,, \quad \quad
  \mathbbm{E}_1 = - {\rm i} \, \frac{\underline{b}_e}{\lambda} \, \mathbbm{u}_{e1} + {\rm i} \, \frac{\underline{b}_i}{\lambda} \, \mathbbm{u}_{i1} = - {\rm i} \, \frac{(\underline{b}_e^2 + \lambda^2 - \underline{c}_{ek}^2)}{\lambda \,  \underline{b}_e} \, \mathbbm{u}_{e1} \, .
\end{align*}
With $ \mathbbm{u}_{e1} = \lambda \, \underline{b}_e \, \underline{b}_i $, this becomes
\[ \varrho_e =- \underline{a}_e \, \underline{b}_e \, \underline{b}_i \, |k| \,,\quad \ \varrho_i = \underline{a}_i \, (\lambda^2 - \underline{c}_{ek}^2) \, |k| \, , \quad \ \mathbbm{u}_{i1} = (\underline{c}_{ek}^2 -\lambda^2) \, \lambda \,, \quad \ \mathbbm{E}_1 = - {\rm i} \, \underline{b}_i \, (\underline{b}_e^2 + \lambda^2 - \underline{c}_{ek}^2) \, . \]
We introduce the notation
\begin{align*}
 & \underline{d}^l_{k}:= (\lambda_{\mypar k}^l)^2 - \underline{c}_{ek}^2 = \Lambda^l- \underline{c}_{ek}^2 = \frac 12 \,  (\underline{c}_{ik}^2- \underline{c}_{ek}^2) - \frac 12 \, \sqrt{ (\underline{c}_{ek}^2- \underline{c}_{ik}^2)^2 + 4 \, \underline{b}_e^2 \, \underline{b}_i^2 }\,, \\
 & \underline{d}_{k}^r:= (\lambda_{\mypar k}^r)^2 - \underline{c}_{ek}^2 = \Lambda^r- \underline{c}_{ek}^2 = \frac 12 \,  (\underline{c}_{ik}^2- \underline{c}_{ek}^2) + \frac 12 \, \sqrt{ (\underline{c}_{ek}^2- \underline{c}_{ik}^2)^2 + 4 \, \underline{b}_e^2 \, \underline{b}_i^2 }\, .
\end{align*}  
With these conventions, we have $ \underline{d}^l_{k} < \underline{d}^r_{k}$, and we obtain
\begin{equation}\label{rparallellk}
\begin{array}{l}
\displaystyle r_{\mypar k}^l := c^l_k \left(\begin{array}{c}
    - \underline{a}_e \, \underline{b}_e \, \underline{b}_i \, |k| \smallskip \\
    + \underline{a}_i \, \underline{d}_{k}^l \, |k| \smallskip\\
   + \underline{b}_e \, \underline{b}_i \, \lambda_{\mypar k}^l \, \mathbbm{e}^1 \smallskip\\
- \underline{d}_{k}^l \, \lambda_{\mypar k}^l \,  \mathbbm{e}^1 \smallskip\\
- {\rm i} \, \underline{b}_i \, (\underline{b}_e^2 + \underline{d}_{k}^l) \,  \mathbbm{e}^1 \smallskip\\
  0 \end{array} \right) = - J \bar r_{\mypar -k}^l \, , \smallskip \\
\displaystyle r_{\mypar k}^r := c^r_k \left(\begin{array}{c}
    - \underline{a}_e \, \underline{b}_e \, \underline{b}_i \, |k| \smallskip \\
    + \underline{a}_i \, \underline{d}_{k}^r \, |k| \smallskip\\
   + \underline{b}_e \, \underline{b}_i \, \lambda_{\mypar k}^r \, \mathbbm{e}^1 \smallskip\\
- \underline{d}_{k}^r \, \lambda_{\mypar k}^r \,  \mathbbm{e}^1 \smallskip\\
- {\rm i} \, \underline{b}_i \, (\underline{b}_e^2 + \underline{d}_{k}^r) \,  \mathbbm{e}^1 \smallskip\\
  0 \end{array} \right) = - J \bar r_{\mypar -k}^r \, , 
  \end{array} 
\end{equation}
where $ c^l_k $ and $ c^r_k $ are normalizing constants. By exploiting the relations
\[ \underline{d}_{k}^l \, \underline{d}_{k}^r = - \underline{b}_e^2 \, \underline{b}_i^2 \, , \qquad \underline{d}_{k}^l + \underline{d}_{k}^r = \underline{b}_i^2 - \underline{b}_e^2 + (\underline{a}_i^2 - \underline{a}_e^2) \, |k|^2 \, , \]
we can directly check that $  r_{\mypar k}^l $ and $ r_{\mypar k}^r $ are orthogonal.


\subsubsection{Summary table}\label{summarytable}
When exploiting the formulas \eqref{decompocU} and \eqref{decompocLkU}, the following catalogue, where $ \lambda_{\mypar k}^l $ and $ \lambda_{\mypar k}^r $ can be computed through \eqref{biglambdas}, may be useful.

\begin{figure}[h]
{\renewcommand{\arraystretch}{1.5}
\centerline{\begin{tabular}{|c|c|c|c|c|c|c|c|}
  \hline
  \ & $ M^0_k $ &
   $ \mathring{M}^0_k $ & $ N_k $ & non-zero eigenvalues & $ M^1_k $ & $ M^2_k $ & $ M^3_k $ \\
  \hline
  $ k=0 $ & 8 & 7 & 1 & $ \lambda_0^1 = \underline{b} $ & 3 & $ \times $ & $ \times $ \\
  \hline
$ \displaystyle \begin{array}{c}
k \not = 0 \ \text{and} \\
(\underline{a}_e , \underline{a}_i) = (1,1) \smallskip
\end{array} $
   & 6 & 4 & 2 & $ \lambda_{\mypar k}^l =|k|  <  \lambda_{\mypar k}^r =\lambda_{\perp k} $ &  1 & 3 & $ \times $ \\
  \hline
  $ \displaystyle \begin{array}{c}
k \not = 0 \ \text{and} \\
(\underline{a}_e , \underline{a}_i) \not = (1,1) \smallskip
\end{array} $ & 6 & 4 & 3 &
$ \displaystyle \begin{array}{c}
\lambda_{\mypar k}^l < \lambda_{\mypar k}^r \, , \\
  \lambda_{\perp k} = \sqrt{\underline{b}^2+|k|^2}
\end{array} $ & 1 \text{or} 2 & 1 \text{or} 2 & 1 \text{or} 2 \\
  \hline
\end{tabular}}}
\caption{Classification of the eigenvalues}
\label{figure1}
\end{figure}

Observe that, by extension of the dispersion relations $ \lambda^\star_{* k} $ to the value $ k=0 $, we obtain $ \lambda_{\mypar 0}^l = 0 $, whereas $ \lambda_{\mypar 0}^r =\lambda_{\perp 0} = \underline{b} > 0 $.
The transition from $ k = 0 $ to $ k \not = 0 $ is therefore as follows. The two non-zero eigenvalues $ \pm {\rm i} \lambda_{\mypar k}^l$ appear which makes $ M_k^0 $ decrease from $ 8 $ to $ 6 $. Along the same lines, the two eigenvalues  $\pm {\rm i} \lambda_k^\jmath$ which are closest to zero for $|k|\sim 0$ (that is $\pm {\rm i} \lambda_k^1$) appear which makes   $ \mathring{M}^0_k $ decrease  from $ 7 $ to $ 4 $ due to the supplementary condition induced by $ \cG_k $ when passing from $ k=0 $ to $ k \not = 0 $.

Note that all eigenvalues $ \lambda^\jmath_k $ depend only on $ \vert k \vert $. In particular, we have $ \lambda^\jmath_{-k} = \lambda^\jmath_k $. Exploiting successively the relations $ \cL_k = \bar \cL_{-k} $ and \eqref{symeigen}, we get that
\[ \cL_{-k} \, r^{\jmath,l}_{-k} = {\rm i} \, \lambda^{\jmath}_{-k} \, r^{\jmath,l}_{-k} \quad \Longrightarrow \quad \cL_{k} \, \bar r^{\jmath,l}_{-k} = - {\rm i} \, \lambda^{\jmath}_{k} \, \bar r^{\jmath,l}_{-k} \quad \Longrightarrow \quad \cL_{k} \, J \, \bar r^{\jmath,l}_{-k} = {\rm i} \, \lambda^{\jmath}_{k} \, J \, \bar r^{\jmath,l}_{-k} \, . \]
This means that $ J \, \bar r^{\jmath,l}_{-k} $ must be an eigenvector of $ \cL_k $ associated with the eigenvalue $ {\rm i} \,\lambda^{\jmath}_{k} $. In fact, as we have done, it is possible to  select the eigenvectors in such a way that
\begin{equation}\label{impaexp}
\qquad J \, \bar r^{\jmath,l}_{k} = \pm r^{\jmath,l}_{-k} \quad \Longrightarrow \quad e^{- {\rm i} \tau \lambda^\jmath_k} \, \big( {}^t (J \bar r^{\jmath,l}_k)  \, \cU_k \big) \ J r^{\jmath,l}_k \, e^{{\rm i} k \cdot x} = \overline{e^{{\rm i} \tau \lambda^\jmath_{-k}} \,  ( {}^t \bar r^{\jmath,l}_{-k}  \, \cU_{-k} ) \  r^{\jmath,l}_{-k} \, e^{- {\rm i} k \cdot x} } \, .
\end{equation}
From now on, in order to differentiate between $ \lambda_{\mypar k}^r $ and $ \lambda_{\perp k} $, we will assume that $  (\underline{a}_e,\underline{a}_i) \not = (1,1) $. A drawback\footnote{The implementation of \eqref{decompocUPP} is not amenable to concise computations concerning  resonances.} of \eqref{decompocUPP} is to deal separately with the case $ \jmath = 0 $, to show an integer $ N_k $ depending on $ k $, and to make appear some $ \mathring{M}^0_k \not = M^0_k $. This can be remedied by duplicating (artificially) the contribution obtained for $ \jmath = 0 $ and by repeating (three times) the eigenvalue $ \lambda^1_0 $ and the eigenvector $ r^{1,l}_0 $. With this in mind, we introduce the coefficients
\[ \tilde \delta^\jmath_k := \left\{ \begin{array}{lcccl}
1 & \text{if} & \jmath \not = 0 & \text{and} & k \not = 0 \, , \\
1/3 & \text{if} & \jmath \not = 0 & \text{and} & k = 0 \, , \\
1/2 & \text{if} & \jmath = 0 \ ,& \ & \ \ \\
\end{array} \right.
\]
and we adopt the following convention
\begin{equation}\label{conventiontilde}
\qquad \tilde M^\jmath_k := \left\{ \begin{array}{lcccl}
  M^\jmath_k & \text{if}  & \jmath \not = 0 \,  & \text{and} &  k \neq 0 \,,  \smallskip \\
  M_0^1     & \text{if}  & \jmath \not = 0 \,  & \text{and} &  k = 0\,,      \smallskip \\
 \mathring{M}^0_k & \text{if} & \jmath = 0\,. &   &
\end{array} \right.
\end{equation}
For $ \jmath \not = 0 $ and $  k = 0 $, the number $ M^\jmath_k $ was not defined before. This is why this special case is specified at the level of \eqref{conventiontilde}. In this continuity, we set
\begin{equation}\label{conventiontildebis}
\tilde M^j_0 := M^1_0 = 3 \, , \qquad \lambda^\jmath_0 := \lambda^1_0=\underline{b} \, , \qquad \forall \, \jmath \in \{1,\cdots, 3\} \, ,
\end{equation}
as well as
\[ r^{\jmath,l}_0 := r^{1,l}_0 \, , \qquad \forall \,l \in\{1,\cdots, 3 \} \, , \qquad \forall \, \jmath \in \{1,\cdots, 3\} \, . \]
Then, in place of \eqref{decompocUPP}, with $ J_- := J $ and $ J_+ := \Id $, we can exhibit the shorter  formulation
\[ e^{\tau \PG_k \cL_k \PG_k} \, \PG_k \cU_k = \sum_{\sigma \in \{-,+\}} \sum_{\jmath=0}^3 \sum_{l=1}^{\tilde M_k^\jmath} \, \tilde \delta_k^\jmath \ e^{\sigma {\rm i} \tau \lambda^\jmath_k} \, \bigl( {}^t (J_\sigma \bar r^{\jmath,l}_k) \, \cU_k \bigr) \ J_\sigma r^{\jmath,l}_k \, . \]
It follows that
\[ e^{\tau \, \PG \cL \PG} \, \PG \, \cU = \sum_{\sigma \in \{ -,+\}} \sum_{k \in \ZZ^3} \sum_{\jmath=0}^3 \sum_{l=1}^{\tilde M_k^\jmath} \, \tilde \delta_k^\jmath \ e^{\sigma {\rm i} \tau \lambda^\jmath_k} \, \bigl( {}^t (J_\sigma \bar r^{\jmath,l}_k) \, \cU_k \bigr) \ J_\sigma r^{\jmath,l}_k \ e^{{\rm i} k \cdot x} \, .  \]
The identity in the right hand side of \eqref{impaexp} together with the change of $ k $ into $ -k $ when $ \sigma = - $, allows to suppress the presence of $ J $. We find that
\begin{equation}\label{appliscat}
e^{\tau \, \PG \cL \PG} \, \PG \, \cU = 2 \, \Re \Bigl( \sum_{k \in \ZZ^3} \cY_k (\tau) \, \cU_k \, e^{{\rm i} k \cdot x} \Bigr) \, ,
\end{equation}
where the linear maps $ \cY_k \equiv \cY_k (\tau) $, which can be seen as complex valued matrices of size $ 14 \times 14 $, are given by
\begin{equation}\label{formcY}
\qquad \cY_k (\tau) \, \cU_k := \sum_{\jmath=0}^3  \, \tilde \delta_k^\jmath \ e^{{\rm i} \tau \lambda^\jmath_k} \ \tilde \Pi^{\jmath}_k \, \cU_k \, , \qquad \tilde \Pi^\jmath_k \, \cU_k := \sum_{l=1}^{\tilde M_k^\jmath}  ( {}^t  \bar r^{\jmath,l}_k \, \cU_k ) \ r^{\jmath,l}_k \, .
\end{equation}
We can recognize here the orthogonal projector $ \tilde  \Pi^{\jmath}_k $ onto the subspace $ \text{Ker} \, (\cL_k - \lambda^\jmath_k \, \Id) \cap \text{Ker} \, \cG_k $.
As expected, we recover here that the function $ e^{\tau \, \PG \cL \PG} \, \PG \, \cU $ is real valued. Given $ n \in \NN^* $, we denote by $ \cI_- z = \bar z $ the complex conjugate of $ z \in \CC^n $, and we just set $ \cI_+ z = z$. By this way, we obtain a relatively simple formula allowing to describe the action of the operator $ e^{\tau \, \PG \cL \PG} \, \PG  $, namely
\begin{equation}\label{appliscatbis}
e^{\tau \, \PG \cL \PG} \, \PG \, \cU
= \sum_{k \in \ZZ^3} \sum_{\sigma \in \{ -,+\}} \cI_\sigma \bigl( \cY_{\sigma k} (\tau) \, \cU_{\sigma k}  \bigr) \, e^{{\rm i} k \cdot x} \, .
\end{equation}


\section{Characterization of resonances.}\label{sec:resonances}
Resonances are detected when looking at the right hand side of \eqref{modueffective}. 
By construction, the matrices $ \cA_j(\cU_0) $ are linear with respect to $ \cU_0 $, while the semilinear term $ f (\cU_0) $ is quadratic with respect to $ \cU_0 $. The spatial derivatives $ \part_{x_j} $ 
commute with the action of $ e^{\tau \, \PG \cL \PG}  $. The matter is therefore to consider expressions like 
\[ \MM_\tau \bigl \lbrack e^{- \tau \, \PG \cL \PG} \PG \cB (e^{\tau \PG \cL \PG} \PG \cU, e^{\tau \PG \cL \PG} \PG \tilde \cU ) \bigr \rbrack \, , \]
where $ \cB : \RR^{14} \longrightarrow \RR^{14} $ is a bilinear map. Exploiting \eqref{appliscatbis}, we can compute
\[ \cB (e^{\tau \PG \cL \PG} \PG \cU, e^{\tau \PG \cL \PG} \PG \tilde \cU ) = \sum_{\sigma',\sigma''\in \{ -,+\}} \sum_{n \in \ZZ^3} f_{n,\sigma',\sigma''} (\tau) \, e^{{\rm i} n\cdot x} , \]
where
\begin{equation}\label{formqn}
f_{n,\sigma',\sigma''} (\tau) := \sum_{k + m = n} \cB \Bigl( \cI_{\sigma'} \bigl(\cY_{\sigma' k}(\tau) \, \cU_{\sigma' k} \bigr) , \cI_{\sigma''} \bigl( \cY_{\sigma'' m} (\tau) \, \tilde \cU_{\sigma'' m} \bigr) \Bigr) \, .
\end{equation}
Applying again  \eqref{appliscatbis}, we obtain
\begin{equation}\label{forintt}
\begin{array}{l}
\MM_\tau \bigl \lbrack e^{- \tau \, \PG \cL \PG} \PG \cB (e^{\tau \PG \cL \PG} \PG \cU, e^{\tau \PG \cL \PG} \PG \tilde \cU ) \bigr \rbrack \smallskip \\
\qquad \qquad \displaystyle =
\sum_{n \in \ZZ^3}
\sum_{\text{\tiny $ \displaystyle \begin{array}{r}
\sigma,\sigma',\sigma''\\
       \in \{ -,+ \}
      \end{array} $}} \MM_\tau \bigl \lbrack  \cI_\sigma \bigl( \cY_{\sigma n} (-\tau) \, f_{\sigma n,\sigma',\sigma''} (\tau) \bigr) \bigr \rbrack \, e^{{\rm i} n \cdot x} \, .
\end{array}
\end{equation}
Using \eqref{formcY}, we get
\[ \MM_\tau \bigl \lbrack  \cI_\sigma \bigl( \cY_{\sigma n} (-\tau) \, f_{\sigma n,\sigma',\sigma''} (\tau) \bigr) \bigr \rbrack = \sum_{\jmath=0}^3 \tilde \delta_n^\jmath \ \MM_\tau \bigl \lbrack e^{- \sigma {\rm i} \tau \lambda^\jmath_n} \ \cI_\sigma ( \tilde \Pi^\jmath_{\sigma n} \, f_{\sigma n,\sigma',\sigma''} ) (\tau) \bigr \rbrack  \, . \]
The role of $ \MM_\tau $ is to eliminate the presence of $ \tau $. In practice, replacing $ f_{\sigma n,\sigma',\sigma''} $ as indicated at the level of  \eqref{formqn} as well as $ \cY_{\sigma' k} \, \cU_{\sigma' k} $ and $ \cY_{\sigma'' m} \, \tilde \cU_{\sigma'' m} $ as specified in \eqref{formcY}, we have to compute under the constraint $ k + m = \sigma n $ expressions like
\[ \MM_\tau \bigl \lbrack e^{{\rm i} \sigma \tau (\sigma'  \lambda^{\jmath'}_k + \sigma'' \lambda^{\jmath''}_m - \lambda^\jmath_n)}\bigr \rbrack = \left \{ \begin{array}{lcl}
0 & \text{if} &   \sigma'  \lambda^{\jmath'}_k + \sigma'' \lambda^{\jmath''}_m - \lambda^\jmath_n \not = 0 \, , \smallskip \\
1 & \text{if} &   \sigma'  \lambda^{\jmath'}_k + \sigma'' \lambda^{\jmath''}_m -   \lambda^\jmath_n = 0 \, .
   \end{array} \right.
\]
This remark motivates a sorting of resonances according to
\[ \cR^{\jmath,\jmath',\jmath''}_{\sigma,\sigma',\sigma''} (n) := \bigl \lbrace \, (k,m)\in \ZZ^3 \times \ZZ^3 \, ; \, k + m = \sigma n \ \text{and} \ \sigma'  \lambda^{\jmath'}_k + \sigma'' \lambda^{\jmath''}_m - \lambda^\jmath_n = 0 \, \bigr \rbrace \, . \]
Taking this into account, there remains (the subscript $ n $ below is for the $ n^{th} $ Fourier coefficient)
\[ \begin{array}{l}
\MM_\tau \bigl \lbrack e^{- \tau \, \PG \cL \PG} \PG \cB (e^{\tau \PG \cL \PG} \PG \cU, e^{\tau \PG \cL \PG} \PG \tilde \cU ) \bigr \rbrack_n \smallskip \\
\qquad \displaystyle =
\sum_{\text{\tiny $ \displaystyle \begin{array}{r}
\sigma,\sigma',\sigma''\\
       \in \{ -,+ \}
      \end{array} $}} \sum_{\text{\tiny $ \displaystyle \begin{array}{r}
\jmath,\jmath',\jmath''\\
       =0
      \end{array} $}}^3
      \sum_{\text{\tiny $ \displaystyle \begin{array}{l}
(k,m)\in \\
 \cR^{\jmath,\jmath',\jmath''}_{\sigma,\sigma',\sigma''} (n)
      \end{array} $}} \tilde \delta_n^\jmath \, \tilde \delta_k^{\jmath'} \, \tilde \delta_m^{\jmath''} \ \cI_\sigma \, \bigl \lbrack \tilde \Pi^\jmath_{\sigma n} \, \cB \bigl(\cI_{\sigma'} (\tilde
\Pi^{\jmath'}_{\sigma' k} \cU_{\sigma' k} ) , \cI_{\sigma''} (\tilde \Pi^{\jmath''}_{\sigma'' m} \tilde \cU_{\sigma'' m} ) \bigr) \bigr \rbrack \, 
\end{array} .\]
The purpose is to determine if FLM can differ from SLM. To this end, our strategy is to focus on the spatial mean value of $ \cU $ which is
\begin{equation} \label{spamean} \langle \cU \rangle (t) := \int_{\TT^3} \cU(t,x) \ dx = \cU_0 (t) \, , 
\end{equation}
and to compare it with the expression $ \langle \PP_{\! e} \, \cU_0 \rangle $ inherited from the SL Model \eqref{modu0effective}. This furnishes a first entry point into the impact of resonances. In this way, the discussion comes down to the study of the zero mode
 $ n = 0 $. From now on, we fix $ n = 0 $. This means that $ m=-k $ and that we only need to look at\footnote{From now on, we can now omit to mention that $ n = 0 $ and that  $ m = -k $.}
\begin{equation} \label{defredresonances}
\quad R^{\jmath,\jmath',\jmath''}_{\sigma',\sigma''} := \bigl \lbrace \, k \in \ZZ^3 \, ; \, \sigma'  \lambda^{\jmath'}_k + \sigma''  \lambda^{\jmath''}_k - \lambda^\jmath_0 = 0 \, \bigr \rbrace \, . 
\end{equation}
With this convention, remark that
\[ \cR^{\jmath,\jmath',\jmath''}_{\sigma,\sigma',\sigma''} (0) = \bigl \lbrace \, (k,-k) \, ; \, k \in R^{\jmath,\jmath',\jmath''}_{\sigma',\sigma''} \, \bigr \rbrace \, .  \]
The aim of this subsection is to describe the content of $ R^{\jmath,\jmath',\jmath''}_{\sigma',\sigma''} $.
It is easy to see that
\[ \begin{array}{ll}
R^{\tilde \jmath,0,0}_{\sigma',\sigma''} =
R^{0,\tilde \jmath,0}_{\sigma',\sigma''} =
R^{0,0,\tilde \jmath}_{\sigma',\sigma''} = \emptyset \, , \qquad & \forall \, \tilde \jmath \in \{ 1,2,3 \} \, .
   \end{array}
\]
Thus, either $ (\jmath,\jmath',\jmath'') = (0,0,0) $ or at least two indices among $ \jmath $, $ \jmath' $ and $ \jmath'' $ must be non-zero.
In Paragraph \ref{jpour000}, we first  examine what happens when $ (\jmath,\jmath',\jmath'') = (0,0,0) $.
In Paragraph \ref{jpour000k}, we put apart the case $ k = 0 $ as well as $ (\jmath,\jmath',\jmath'') \not = (0,0,0) $. In Paragraph \ref{jpour0}, we investigate the configuration $ \jmath = 0 $ with $ \jmath' \not = 0 $, $\jmath'' \not = 0 $ and $ k \not = 0 $. In Paragraph \ref{jpournot0},
we examine the complementary case $ \jmath \not = 0 $ and $ k \not = 0 $. All these cases are distinct and, at the end, the discussion will be exhaustive.


\subsection{Case 1 when $ (\jmath,\jmath',\jmath'')= (0,0,0) $}\label{jpour000} For $ \jmath = \jmath' =\jmath'' = 0 $, the three eigenvalues $ \lambda^0_k $, $ \lambda^0_m $ and $ \lambda^0_0 $ are just zero. Time oscillations are not implied. This corresponds to the prepared configuration. Observe that $ R^{0,0,0}_{\sigma',\sigma''} = \ZZ^3 $.


\subsection{Case 2 when $ k = 0 $ and $ (\jmath,\jmath',\jmath'') \not = (0,0,0) $}\label{jpour000k} As can be seen in Figure \ref{figure1}, the selection of $ k = 0 $ needs special treatment. With the convention \eqref{conventiontildebis}, we have to look at the condition
\[ \sigma' \lambda^{\jmath'}_0 + \sigma'' \lambda^{\jmath''}_0 - \lambda^\jmath_0 = 0 \, , \qquad \lambda^\jmath_0 = \left \lbrace \begin{array}{lcl}
     0 & \text{if} & \jmath = 0  \, , \\
     \underline{b} & \text{if} & \jmath \not = 0 \, . \end{array} \right.
 \]
It follows that\footnote{The description of resonances is very sensitive to the enumeration of eigenvalues. In the absence of \eqref{conventiontildebis}, the discussion would be different with only one indice $ \jmath = 1 $ implied.}
\[ \begin{array}{llll}
0 \in R^{0,\jmath',\jmath''}_{\sigma',\sigma''} & \text{when} \ \jmath' \not = 0 \ , \! \! & \jmath'' \not = 0 \ \, \text{and} \! \! & \sigma' \sigma'' = - \, , \\
0 \in R^{\jmath,0,\jmath''}_{\sigma',\sigma''} & \text{when} \ \jmath \not = 0 \ , \! \! & \jmath'' \not = 0 \ \, \text{and} \! \! & \sigma'' = + \, , \\
0 \in R^{\jmath,\jmath',0}_{\sigma',\sigma''} & \text{when} \ \jmath \not = 0 \ , \! \! & \jmath' \not = 0 \ \ \text{and} \! \! & \sigma' = + \, . \\
   \end{array} \]
In all other situations (for instance when $ \jmath \not = 0 $, $ \jmath' \not = 0 $ and $ \jmath'' \not = 0 $), the origin $ 0 \equiv (0,0,0) $ of $ \ZZ^3 $ does not belong to $ R^{\jmath,\jmath',\jmath''}_{\sigma',\sigma''} $.


\subsection{Case 3 when $ \jmath = 0 $ with $ \jmath' \not = 0 $, $ \jmath'' \not = 0 $ and $ k \not = 0 $}\label{jpour0}
Here, we must consider
\[ R^{0,\jmath',\jmath''}_{\sigma',\sigma''} \setminus \{ 0 \} = \bigl \lbrace 0 \not = k \in \ZZ^3 \, ; \, \sigma'  \, \lambda^{\jmath'}_k + \sigma'' \, \lambda^{\jmath''}_k = 0 \bigr \rbrace = R^{0,\jmath',\jmath''}_{-\sigma',-\sigma''} \setminus \{ 0 \}  \, . \]
Since all implied eigenvalues are positive, assuming that $ \sigma' \,\sigma'' = + $, we have
\begin{equation}\label{eq:const0}
 R^{0,\jmath',\jmath''}_{\sigma',\sigma''} \setminus \{ 0 \}  = \emptyset
        \quad \text{when} \ \sigma' \,\sigma'' = + \ .
\end{equation}
Otherwise, when $ \sigma' \,\sigma'' = - $, we have to deal with
\[ R^{0,\jmath',\jmath''}_{\sigma',\sigma''} \setminus \{ 0 \} = \bigl \lbrace \, 0 \not = k \in \ZZ^3 \, ; \, \lambda^{\jmath'}_k = \lambda^{\jmath''}_k \, \bigr \rbrace \, .\]
For $ \jmath' \not = \jmath'' $ and $ k \not = 0 $, the two  eigenvalues $ \lambda^{\jmath'}_k $ and $ \lambda^{\jmath''}_k $ are distinct so that
\begin{equation*}\label{eq:const1}
\qquad R^{0,\jmath',\jmath''}_{\sigma',\sigma''} \setminus \{ 0 \} = \emptyset
        \quad \text{when} \ \sigma' \,\sigma'' = - \ \text{as well as} \  \jmath' \not = 0 \, , \jmath'' \not = 0 \ \text{and} \ \jmath' \not = \jmath'' \, .
\end{equation*}
On the contrary, when $ \jmath' = \jmath'' $, the separation of the eigenvalues disappears, and we obtain
\begin{equation}\label{eq:const2}
\qquad R^{0,\jmath',\jmath'}_{\sigma',\sigma''} \setminus \{ 0 \} = \ZZ^3 \setminus \{ 0 \} \quad \text{when} \ \sigma' \,\sigma'' = - \ \text{and} \  \jmath' \not = 0 \, .
\end{equation}
This latest configuration is of special interest because it implies an infinite number of resonances, which is likely to bring an important contribution.


\subsection{Case 4 when $ \jmath \not= 0 $ and $ k \not = 0 $}\label{jpournot0} Here, we have to deal with
\begin{equation}\label{Case44}
R^{\jmath,\jmath',\jmath''}_{\sigma',\sigma''} \setminus \{ 0 \} := \bigl \lbrace \, 0 \not = k \in \ZZ^3 \, ; \, \sigma' \,  \lambda^{\jmath'}_k + \sigma'' \, \lambda^{\jmath''}_k = \underline{b} \, \bigr \rbrace \, .
\end{equation}  

In Paragraph \ref{jpournot0fen}, we start with general considerations. Then, in Paragraph \ref{jpournot0sp}, we impose special assumptions in order to complement the analysis.


\subsubsection{The general situation}\label{jpournot0fen} We can assert that the number of resonances is finite, as a corollary of the following statement.

\begin{lemma}[About the content of $ R^{\jmath,\jmath',\jmath''}_{\sigma',\sigma''} \setminus \{ 0 \}  $ when $ \jmath \not = 0 $]\label{comproperdrebis} Assume that $ \underline{a}_e \not = 1 $, $  \underline{a}_i \not = 1 $ and $ \jmath \not = 0 $. Then, the set
$ R^{\jmath,\jmath',\jmath''}_{\sigma',\sigma''} \setminus \{ 0 \}  \subset \ZZ^3 $ is  bounded.
\end{lemma}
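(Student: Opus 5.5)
Since $\jmath \not = 0$, the condition defining \eqref{Case44} reads $\sigma' \lambda^{\jmath'}_k + \sigma'' \lambda^{\jmath''}_k = \underline{b}$ with $\underline{b} > 0$ fixed. We argue by contradiction: suppose there is an unbounded sequence $k_p \in R^{\jmath,\jmath',\jmath''}_{\sigma',\sigma''} \setminus \{0\}$ with $|k_p| \to +\infty$. Passing to a subsequence, we may assume $|k_p| \geq m$ for all $p$, where $m$ is the bound of Lemma \ref{Classification}. Then the labelling of $\lambda^1_k < \lambda^2_k < \lambda^3_k$ by the three dispersion laws $\lambda_{\mypar k}^l$, $\lambda_{\mypar k}^r$ and $\lambda_{\perp k}$ is fixed along the sequence. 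Which of the alternatives i), ii) or iii) applies is immaterial; case iv) is excluded by hypothesis. The strategy is to show that the left-hand side of the resonance relation either tends to $+\infty$ (or $-\infty$), or tends to $0$, as $|k| \to \infty$. In both cases it cannot stay equal to $\underline{b}$.

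We distinguish cases according to the signs and indices.

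\emph{First case.} If $\sigma' = \sigma'' = -$, the left-hand side is negative, so no resonance is possible.

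\emph{Second case.} If $\sigma' = \sigma'' = +$, the asymptotics \eqref{equi+} give $\lambda^{\jmath'}_k + \lambda^{\jmath''}_k \geq c\,|k| \to +\infty$ for some $c>0$, which contradicts the value $\underline{b}$ for large $|k|$.

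\emph{Third case.} If $\sigma' \sigma'' = -$ and $\jmath' = \jmath''$, the left-hand side is identically $0 \not = \underline{b}$ (this also covers $\jmath' = \jmath'' = 0$).

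\emph{Fourth case.} If $\sigma'\sigma'' = -$ and exactly one of $\jmath', \jmath''$ vanishes, the left-hand side equals $\pm \lambda^{\tilde\jmath}_k$, whose absolute value grows linearly in $|k|$. This is again excluded.

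\emph{Fifth case.} The remaining situation is $\sigma'\sigma'' = -$ with $\jmath' \not = \jmath''$, both non-zero. Here we must control the difference of two distinct dispersion laws. Under $\underline{a}_e \not = 1$ and $\underline{a}_i \not = 1$, the leading coefficients in \eqref{equi+} are $\min(\underline{a}_e,\underline{a}_i)$, $\max(\underline{a}_e,\underline{a}_i)$ and $1$. Whenever two of these coefficients differ, the difference of the corresponding eigenvalues is asymptotic to a non-zero multiple of $|k|$, hence unbounded, and we are done.

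The main obstacle is the degenerate subcase $\underline{a}_e = \underline{a}_i =: a \not = 1$. In that subcase $\lambda_{\mypar k}^l$ and $\lambda_{\mypar k}^r$ share the same leading term $a|k|$. To handle it, we expand more precisely using \eqref{biglambdas}. Since $\underline{c}_{ek}^2 - \underline{c}_{ik}^2 = \underline{b}_e^2 - \underline{b}_i^2$ is bounded, we get that $\Lambda^r - \Lambda^l = \sqrt{(\underline{b}_e^2-\underline{b}_i^2)^2 + 4\underline{b}_e^2\underline{b}_i^2} = \underline{b}^2$ is constant. Consequently
\[
\lambda_{\mypar k}^r - \lambda_{\mypar k}^l = \frac{\underline{b}^2}{\lambda_{\mypar k}^r + \lambda_{\mypar k}^l} \underset{+\infty}{\sim} \frac{\underline{b}^2}{2a|k|} \longrightarrow 0 \, ,
\]
so this difference cannot equal $\underline{b}$ for large $|k|$. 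In general (without assuming $\underline{a}_e = \underline{a}_i$), the same identity $\lambda - \lambda' = (\Lambda - \Lambda')/(\lambda + \lambda')$ applied to any pair of squared eigenvalues gives a uniform way to treat every pair in the fifth case. One only needs to check that $\Lambda - \Lambda'$ is either of order $|k|^2$ (giving divergence) or bounded (giving a limit $0$). It is never exactly of order $|k|$, which would give a finite non-zero limit that might equal $\underline{b}$. Verifying this dichotomy from the explicit formulas, with $\lambda_{\perp k}^2 = \underline{b}^2 + |k|^2$ compared to $\Lambda^{l,r} = \underline{a}_{\min/\max}^2 |k|^2 + O(1)$, is the only delicate point. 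Once it is established, every subsequence along which $|k_p| \to \infty$ is excluded, so the set is bounded.
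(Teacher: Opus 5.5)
Your proof is correct and follows essentially the paper's route. You do a case analysis on $(\sigma',\sigma'')$, use the asymptotics \eqref{equi+} for pairs of dispersion laws with different leading coefficients, and treat $\lambda^r_{\mypar k}-\lambda^l_{\mypar k}$ separately when $\underline{a}_e=\underline{a}_i$: the paper solves $\sqrt{\underline{b}^2+\underline{a}_e^2|k|^2}-\underline{a}_e|k|=\underline{b}$ exactly, while you use $\Lambda^r-\Lambda^l=\underline{b}^2$. You also cover the opposite-sign case with exactly one vanishing index, which the paper's list of constants $c$ silently omits.

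Two minor points. In your second case the bound $\lambda^{\jmath'}_k+\lambda^{\jmath''}_k\geq c|k|$ fails for $\jmath'=\jmath''=0$; this is harmless, since the left side is then $0\neq\underline{b}$. Your closing ``in general'' paragraph is superfluous: the dichotomy you call delicate is never needed, because the fifth case is already settled by \eqref{equi+} together with the degenerate subcase.
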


In fact, according to the choice of $ (\jmath,\jmath',\jmath'') $ or $ (\sigma',\sigma'')$, the proof given below presents a more refined description of $ R^{\jmath,\jmath',\jmath''}_{\sigma',\sigma''} \setminus \{ 0 \} $.

\begin{proof}
It is clear that $ R^{\jmath,\jmath',\jmath''}_{-,-} = \emptyset $. Exploiting \eqref{equi+}, it is easy to deduce that $ R^{\jmath,\jmath',\jmath''}_{+,+} $ is bounded. There remains to work with $ \sigma' \, \sigma'' = -1 $. When $ \jmath' = \jmath'' $, since $ 0 \not = \underline{b} $, we find that $ R^{\jmath,\jmath',\jmath'}_{\sigma', \sigma''} = \emptyset $. When $ \jmath' \not = \jmath'' $, coming back to \eqref{equi+}, we can assert that
\[ \vert \sigma' \,  \lambda^{\jmath'}_k + \sigma'' \, \lambda^{\jmath''}_k \vert \underset{\vert k \vert \rightarrow +\infty}{\sim} c \ \vert k \vert \, ,  \]
with
\[ c \in \bigl \lbrace \, \vert  \min \, (\underline{a}_e ;\underline{a}_i) - 1 \vert \, ; \, \vert \max \, (\underline{a}_e ;\underline{a}_i) - 1 \vert \, ; \, \max \, (\underline{a}_e ;\underline{a}_i) - \min \, (\underline{a}_e ;\underline{a}_i) \, \bigr \rbrace \, . \]
When $ \underline{a}_e \not = \underline{a}_i $, we find that $ c \in \RR_+^* $ and $ c \, \vert k \vert > \underline{b} $ as soon as $ \vert k \vert $ is large enough. When $ \underline{a}_e = \underline{a}_i \not = 1 $, the same argument holds except when $ \lambda^{\jmath'}_k = \lambda_{\mypar k}^l $ and $ \lambda^{\jmath''}_k = \lambda_{\mypar k}^r $ or when $ \lambda^{\jmath'}_k = \lambda_{\mypar k}^r $ and $ \lambda^{\jmath''}_k = \lambda_{\mypar k}^l $. In this latter situation, we have to test the relation
\[ \lambda_{\mypar k}^r - \lambda_{\mypar k}^l = \sqrt {\underline{b}^2 + \underline{a}_e^2 \, \vert k \vert^2 } - \underline{a}_e \, \vert k \vert = \underline{b} \, , \]
which leads to a contradiction when $ \vert k \vert \not = 0 $.
\end{proof}

Whether $ R $ is empty or not 
(when $ \jmath \not= 0 $ and $ k \not = 0 $) 
depends on the values of the parameters $ \underline{a}_s $ and $ \underline{b}_s $
managing $ \cL $. The formulas leading to $ \lambda_k^\jmath $ are too complicated in order to get an exhaustive description of $ R $. At this stage, we can only guess that resonances are a rare event and that, generically, we should have $ R = \emptyset $ (as illustrated below).


\subsubsection{A special situation}\label{jpournot0sp} We consider here data (pressures $ p_s $, density $ \underline{n} $ and masses $ m_s $) which are adjusted in such a way that $ \underline{a}_e = \underline{a}_i $.

\begin{assumption}\label{theascoin} There exists some $ \underline{a} \in \RR_+^* $ such that $ \underline{a} = \underline{a}_e = \underline{a}_i < 1  $.
\end{assumption}

\noindent The case $ \underline{a}>1 $ is completely similar. For this reason, it is not necessary to develop it. The advantage of the condition $ \underline{a}_e = \underline{a}_i $ is to allow more explicit computations of the eigenvalues. As a matter of fact, we find
\begin{equation}\label{rangemt} 
\lambda^l_{\mypar k} = \underline{a} \ \vert k \vert < \lambda^r_{\mypar k} = \sqrt{\underline{b}^2 + \underline{a}^2 \, \vert k \vert^2} < \lambda_{\perp k} = \sqrt{\underline{b}^2 + \vert k \vert^2} \, . 
\end{equation}
From there, knowing that $ \jmath \not= 0 $ and $ k \not = 0 $, we can categorize the content of $ R $ according to the different choices of $ \jmath' $, $ \jmath'' $, $ \sigma' $ and $ \sigma'' $. We first investigate what happens when $ \jmath' = 0 $. The role of $ \jmath' $ and $ \jmath'' $ being symmetric, the discussion when $ \jmath'' = 0 $ is the same. For $ \jmath' = 0 $, we have $ \lambda^{\jmath'}_k = 0 $.
Coming back to \eqref{Case44}, we have to impose $ \sigma'' \, \lambda^{\jmath''}_k = \underline{b} $, so  that $ \sigma'' = + $. In view of \eqref{rangemt}, the only way to see a resonance is to select $ \jmath'' $ in such a way that 
$ \lambda^{\jmath''}_k = \lambda^l_{\mypar k} $. The index $ \jmath'' $ must be associated with the parallel mode $ \mypar $, and 
\[ R_{\sigma',-}^{\jmath,0,\jmath''} = \emptyset \, , \qquad R_{\sigma',+}^{\jmath,0,\jmath''} = \bigl \lbrace \, 0 \not = k \, ; \, \vert k \vert = \underline{b} / \underline{a} \, \bigr \} \, , \qquad \jmath \not = 0 \, . \]
By Legendre's three-square theorem, this is satisfied if and only if
\begin{equation}\label{threesquare}
\exists \, (\gamma,\delta) \in \NN^2 \, ; \qquad (\underline{b}/\underline{a})^2 = k_1^2 + k_2^2 + k_3 ^2 = 4^\gamma \ (8 \, \delta + 7 ) \, . 
\end{equation}
Moving on to the case of $ \jmath' \not = 0 $ and $ \jmath'' \not = 0 $, we must look at 
 $ \sigma' \,  \lambda^{\jmath'}_k + \sigma'' \, \lambda^{\jmath''}_k = \underline{b}  $. This relation is not modified by exchanging $ (\jmath',\sigma') $ and $ (\jmath'',\sigma'') $.
Thus, the case $ (\sigma', \sigma'') = (- , +) $ can be deduced from $ (\sigma', \sigma'') = (+ ,-) $. We distinguish between the remaining choices for the signs  $ \sigma' $ and $ \sigma'' $.

\smallskip

\noindent $ \bullet $ When $ \sigma' = \sigma'' = - $, we find $ R^{\jmath,\jmath',\jmath''}_{\sigma',\sigma''} = \emptyset $.

\smallskip

\noindent $ \bullet $ When $ \sigma' = \sigma'' = + $, since $ \underline{b} < \lambda^r_{\mypar k} < \lambda_{\perp k}  $ when $ k \not = 0 $, the only way to exhibit resonances is to take $  \lambda^{\jmath'}_k=\lambda^{\jmath''}_k = \lambda^l_{\mypar k} $, and then 
\[ R^{\jmath,\jmath',\jmath''}_{\sigma',\sigma''} = \bigl \lbrace \, 0 \not = k \in \ZZ^3 \, ; \, \vert k \vert = \underline{b} / (2 \, \underline{a}) \, \bigr \} \, . \]
Again, the above condition on $ k $ can be  characterized by Legendre's three-square theorem.

\smallskip

\noindent $ \bullet $ When $ \sigma' = + $ and $ \sigma'' = - $, a basic calculation indicates that we cannot have $ \lambda^r_{\mypar k} - \lambda^l_{\mypar k}  = \underline{b} $. Only two options remain.  
Either $ \lambda^{\jmath'}_k=\lambda_{\perp k}$ and $\lambda^{\jmath''}_k = \lambda^l_{\mypar k} $, so that 
\[ R^{\jmath,\jmath',\jmath''}_{+,-} = \bigl \lbrace \, 0 \not = k \in \ZZ^3 \, ; \, |k|=2 \, \underline{a} \, \underline{b}/(1-\underline{a}^2) \, \bigr \} \, . \] 
Or $ \lambda^{\jmath'}_k=\lambda_{\perp k}$ and $\lambda^{\jmath''}_k = \lambda^r_{\mypar k} $, in which case
\[ R^{\jmath,\jmath',\jmath''}_{+,-} = \bigl \lbrace \, k \in \ZZ^3 \setminus \{ 0 \} \, ; \, |k|= \underline{b} \,  g_\pm (\underline{a}) \, \bigr \} \, , \qquad g_\pm (\underline{a}) := \frac{\bigl( 1 +\underline{a}^2 \pm 2 \, \sqrt{1- \underline{a}^2+ \underline{a}^4} \bigr)^{1/2}}{1-\underline{a}^2} \, . \] 
The preceding discussion can be summarized as follows.

\begin{lemma}\label{condabsre} [Conditional absence of resonances] Under Assumption \ref{theascoin}, when
\begin{equation}\label{atmostden} \frac{|k|}{\underline{b}} \not = \frac{1}{\underline{a}} \, , \qquad \frac{|k|}{\underline{b}} \not = \frac{1}{2 \, \underline{a}} \, , \qquad \frac{|k|}{\underline{b}} \not = \frac{2 \, \underline{a}}{1 - \underline{a}^2} \, , \qquad  \frac{|k|}{\underline{b}} \not = g_-(\underline{a}) \, , \qquad \frac{|k|}{\underline{b}} \not = g_+ (\underline{a}) \, ,
\end{equation}
as soon as $ \jmath \not= 0 $, we can assert that $ R^{\jmath,\jmath',\jmath''}_{\sigma',\sigma''} \setminus \{ 0 \}  = \emptyset $.
\end{lemma}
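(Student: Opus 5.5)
The plan is a direct exhaustive case analysis of the resonance condition \eqref{Case44}. Fix $\jmath \not = 0$ and $k \not = 0$. Since $\lambda^\jmath_0 = \underline{b}$ for every $\jmath \in \{1,2,3\}$ by the convention \eqref{conventiontildebis}, the condition reads $\sigma' \lambda^{\jmath'}_k + \sigma'' \lambda^{\jmath''}_k = \underline{b}$ and is independent of $\jmath$. Under Assumption \ref{theascoin}, \eqref{rangemt} lets us replace the abstract indices $\jmath',\jmath''$ by the explicit values $0$, $\underline{a}|k|$, $\sqrt{\underline{b}^2+\underline{a}^2|k|^2}$ and $\sqrt{\underline{b}^2+|k|^2}$. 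The statement then amounts to showing that every solution $k$ of this scalar equation has $|k|/\underline{b}$ equal to one of the five values excluded in \eqref{atmostden}. By contraposition, under \eqref{atmostden} the set is empty.

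I will split the analysis by the number of nonzero primed indices and by the signs. $(\jmath',\jmath'') = (0,0)$ is impossible since $\underline{b} > 0$. If exactly one primed index vanishes, say $\jmath' = 0$ (the other case is symmetric), then $\sigma'' = +$ is forced. Both $\sqrt{\underline{b}^2+\underline{a}^2|k|^2}$ and $\sqrt{\underline{b}^2+|k|^2}$ strictly exceed $\underline{b}$ when $k \not = 0$, so only $\underline{a}|k| = \underline{b}$ survives, i.e.\ $|k|/\underline{b} = 1/\underline{a}$. If both primed indices are nonzero, we treat the signs in turn. The case $(-,-)$ gives a negative left side. The case $(+,+)$ forces both eigenvalues to be $\lambda^l_{\mypar k}$ by the same lower bound, giving $|k|/\underline{b} = 1/(2\underline{a})$. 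The case $(+,-)$ covers $(-,+)$ by symmetry. Equal eigenvalues give $0 \not = \underline{b}$, and a negative difference is excluded. That leaves the three ordered pairs coming from \eqref{rangemt}.

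Those three ordered pairs are handled as follows.
\begin{itemize}
\item[(a)] The pair $(\lambda^r_{\mypar k}, \lambda^l_{\mypar k})$. Squaring $\sqrt{\underline{b}^2+\underline{a}^2|k|^2} = \underline{b} + \underline{a}|k|$ gives $2\underline{a}\underline{b}|k| = 0$, which contradicts $k \not = 0$.
\item[(b)] The pair $(\lambda_{\perp k}, \lambda^l_{\mypar k})$. Squaring $\sqrt{\underline{b}^2+|k|^2} = \underline{b} + \underline{a}|k|$ gives $(1-\underline{a}^2)|k|^2 = 2\underline{a}\underline{b}|k|$, hence $|k|/\underline{b} = 2\underline{a}/(1-\underline{a}^2)$. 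This uses $\underline{a} < 1$.
\item[(c)] The pair $(\lambda_{\perp k}, \lambda^r_{\mypar k})$. Set $X = |k|^2$ and write $\sqrt{\underline{b}^2+X} = \underline{b} + \sqrt{\underline{b}^2+\underline{a}^2 X}$. Squaring once gives $(1-\underline{a}^2)X - \underline{b}^2 = 2\underline{b}\sqrt{\underline{b}^2+\underline{a}^2X}$. Squaring again gives a quadratic in $X/\underline{b}^2$: $(1-\underline{a}^2)^2 Y^2 - 2(1+\underline{a}^2)Y - 3 = 0$ with $Y = X/\underline{b}^2$. Its roots are $Y = \bigl(1+\underline{a}^2 \pm 2\sqrt{1-\underline{a}^2+\underline{a}^4}\bigr)/(1-\underline{a}^2)^2$, which matches $g_\pm(\underline{a})^2$.
\end{itemize}

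The main obstacle is only bookkeeping in case (c). The discriminant must simplify to $4(1-\underline{a}^2+\underline{a}^4)$, since $(1+\underline{a}^2)^2 + 3(1-\underline{a}^2)^2 = 4(1-\underline{a}^2+\underline{a}^4)$. One must also note that squaring may introduce spurious roots. In particular $g_-$ has a negative radicand and so gives no real $|k|$. This only strengthens the conclusion, because we only need the inclusion of the resonance set in the excluded values, not equality.

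Collecting all cases shows that any $k \not = 0$ in $R^{\jmath,\jmath',\jmath''}_{\sigma',\sigma''}$ violates \eqref{atmostden}, which proves the lemma.
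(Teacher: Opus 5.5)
Your proposal is correct and follows the paper's own argument, namely the case analysis in Paragraph \ref{jpournot0sp} that the lemma summarizes: the same reduction via \eqref{rangemt}, the same split into $\jmath'=0$ (or $\jmath''=0$), $(\sigma',\sigma'')=(+,+)$ and $(+,-)$, and the same five excluded values of $|k|/\underline{b}$. Your explicit squaring computations fill in what the paper calls ``basic calculations'', and your remark that the radicand defining $g_-(\underline{a})$ is negative (so that exclusion is vacuous) goes slightly beyond the paper.
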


Given $ k \not = 0 $ and $ \underline{b} \in \RR_+^* $, the number of $ \underline{a} \in ]0,1[ $ such that \eqref{atmostden} is not satisfied is finite. Thus, given $ \underline{b} $, the cardinal of values $ \underline{a} \in ]0,1[ $ such that \eqref{atmostden} is not verified for all $ k \in \ZZ^3 $ is at most countable. This confirms that, for generic values of $ (\underline{a},\underline{b}) $, 
there is no resonance in the setting of Paragraph \ref{jpournot0sp}.


\section{Transparencies related to the source term}\label{transparenciesst}
The Euler--Maxwell Two-Fluid (EMTF) system is characterized by a strong coupling between the matter (electrons and ions) and the electromagnetic fields (through the Lorentz force and the electric current). This interplay covers semilinear and quasilinear facets which can be studied separately. The focus here is on the semilinear part. When performing the asymptotic analysis, the source term gives rise to the following quadratic map
\begin{equation} \label{defdefcU}
f (\cU) := {}^t \Bigl( 0,0, - \frac{{}^t (u_e \times B)}{m_e}, \frac{{}^t (u_i \times B)}{m_i},\frac{\varrho_e \,  {}^t u_e}{\sqrt{m_e \, \underline{p}'_e}} - \frac{\varrho_i \, {}^t  u_i}{\sqrt{m_i \, \underline{p}'_i}}, 0
\Bigr) .
\end{equation}
Through the polarization identity,
this can be alternatively expressed as
\begin{equation}\label{polarizationid}
\qquad f(\cU) = \cB (\cU,\cU) \, , \qquad \cB (\cU,\tilde \cU) := \bigl( f(\cU+\tilde \cU)-f(\cU- \tilde \cU) \bigr) /4  = \cB (\tilde \cU,\cU) \, .
\end{equation}
In \eqref{polarizationid}, given two entries $ \cU = {}^t (\varrho_e,\varrho_i,{}^t u_e,{}^t u_i,{}^t E,{}^t B) $ and $ \tilde \cU = {}^t (\tilde \varrho_e, \tilde \varrho_i,{}^t \tilde u_e,{}^t \tilde u_i,{}^t \tilde E,{}^t \tilde B) $, the bilinear symmetric map $ \cB $ is given by
\begin{equation} \label{defdefcUbi} \begin{array}{rl}
\cB (\cU,\tilde \cU) \! \! \! & \displaystyle = {}^t \Bigl( 0,0, - \frac{{}^t (u_e \times \tilde B + \tilde u_e \times B )}{m_e}, \frac{{}^t (u_i \times \tilde B + \tilde u_i \times B)}{m_i}, \smallskip \\
\ & \displaystyle \qquad \qquad \qquad \qquad \qquad \frac{\varrho_e \,  {}^t \tilde u_e + \tilde \varrho_e \,  {}^t u_e}{\sqrt{m_e \, \underline{p}'_e}} - \frac{\varrho_i \,  {}^t \tilde u_i + \tilde \varrho_i \,  {}^t u_i }{\sqrt{m_i \, \underline{p}'_i}}, 0
\Bigr) .
   \end{array}
   \end{equation}
Coming back to the preliminary discussion in  Section \ref{sec:resonances}, we  concentrate on the zero mode $ n = 0 $. With $ \cB $ as in \eqref{defdefcUbi}, this means to look at
\begin{equation}\label{evolutionU0}
\begin{array}{rl}
 \displaystyle \part_t \, \cU_0 = \! \! \!
\sum_{
\sigma',\sigma''
       \in \{ -,+ \}} \ \sum_{
\jmath,\jmath',\jmath''
       =0}^3 & \displaystyle  \sum_{
k \in R^{\jmath,\jmath',\jmath''}_{\sigma',\sigma''}} \! \! \! 2 \ \tilde \delta_0^\jmath \, \tilde \delta_k^{\jmath'} \, \tilde \delta_k^{\jmath''} \\
 \ & \displaystyle \qquad \times \Re \, \bigl \lbrack \tilde \Pi^\jmath_0 \, \cB \bigl(\cI_{\sigma'} (\tilde
\Pi^{\jmath'}_{\sigma' k} \cU_{\sigma' k} ) , \cI_{\sigma''} (\tilde \Pi^{\jmath''}_{- \sigma'' k} \cU_{- \sigma'' k} ) \bigr) \bigr \rbrack \, .
\end{array}
\end{equation}
In accordance with the structure of Section \ref{sec:resonances}, the right hand side may be decomposed into
\[ \part_t \, \cU_0 =  \cF_1 + \cF_2 + \cF_3 + \cF_4 \, , \]
where the subscript $ a $ inside $ \cF_a $ means that, in the above sum, the selection of $ (\jmath,\jmath',\jmath'',\sigma',\sigma'') $ as in ``Case $ a $'' of Subsection  \ref{sec:resonances}.$a $ is retained. Below, we discuss separately the contents of the $ \cF_a $.


\subsection{Case 1}\label{apportcase1} When $ (\jmath,\jmath',\jmath'')= (0,0,0) $, we have to deal with
\[ \cF_1 = \frac{1}{4} \, \sum_{
k\in \ZZ^3} \Re \Bigl \lbrack \tilde \Pi^0_{0} \,
\cB \Bigl( \sum_{\sigma'} \cI_{\sigma'} (\tilde
\Pi^{0}_{\sigma' k} \,  \cU_{\sigma' k} ) ,\sum_{\sigma''} \cI_{\sigma''} (\tilde \Pi^{0}_{-\sigma'' k} \,  \cU_{-\sigma'' k} ) \Bigr) \Bigr \rbrack \, . \]
In view of \eqref{cestr010}, we have $ r^{0,l}_0 = \bar r^{0,l}_0 $. Coming back to \eqref{cK0knot0} and \eqref{cK0knot0bis}, we can check that $ r^{0,l}_{-k} = \pm \bar r^{0,l}_k $ when $ k \not = 0 $. Briefly
\[ \cI_- (\tilde
\Pi^{0}_{-k} \, \cU_{-k} ) = \sum_{l=1}^4  ( {}^t r^{0,l}_{-k} \, \bar \cU_{-k} ) \ \bar r^{0,l}_{-k} = \tilde
\Pi^{0}_{k} \, \cU_{k} \, . \]
It follows that
\[ \cF_1 = \sum_{
k\in \ZZ^3} \Re \bigl \lbrack \tilde \Pi^0_{0} \, \cB (\tilde
\Pi^0_{k} \, \cU_{k} , \cI (\tilde \Pi^0_{k} \, \cU_{ k} ) ) \bigr \rbrack = \sum_{
k\in \ZZ^3} \tilde \Pi^0_{0} \, \bigl \lbrack f \bigl( \Re ( \tilde
\Pi^0_{k} \, \cU_{k} ) \bigr) + f \bigl( \Im ( \tilde
\Pi^0_{k} \, \cU_{k} ) \bigr) \bigr \rbrack \, . \]

\begin{lemma}[Polarization of the source term]\label{polarization} Given $ \cU $ as in \eqref{decompocUen}, define $ \cW := -{\underline b}_e u_e + {\underline b}_i u_i $. Then
\begin{equation}\label{tildepi000}
\qquad \tilde \Pi^0_{0} \, f(\cU) = \frac{1}{\sqrt{m_e \, m_i}} \ \frac{1}{{\underline b}^2} \ {}^t \bigl( 0, 0, {\underline b}_i \,  {}^t (\cW \times B) , {\underline b}_e \,  {}^t (\cW \times B) , 0, 0 \bigr) \, .
\end{equation}
\end{lemma}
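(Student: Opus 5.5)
The plan is to compute $\tilde \Pi^0_0 f(\cU)$ directly, using the explicit orthonormal basis of ${\rm Ker}\,\cL_0\cap{\rm Ker}\,\cG_0$ listed in \eqref{cestr010}. By \eqref{formcY} with $\tilde M^0_0=\mathring{M}^0_0=7$, we have
\[ \tilde \Pi^0_0\, f(\cU) = \sum_{l=1}^{7} \bigl({}^t \bar r^{0,l}_0\, f(\cU)\bigr)\, r^{0,l}_0 . \]
We then read off from \eqref{defdefcU} which inner products can be non-zero. The vector $f(\cU)$ has zero density components and zero $B$ component. It only has the velocity components $-(u_e\times B)/m_e$ and $(u_i\times B)/m_i$, together with an $E$ component. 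Since $r^{0,1}_0$ lives only in the density slots and $r^{0,5}_0,r^{0,6}_0,r^{0,7}_0$ live only in the $B$ slots, their contributions vanish. None of the $r^{0,l}_0$ has an $E$ component, so the $E$-part of $f$ (the term involving $\varrho_s u_s$) disappears entirely. Only $l=2,3,4$ remain.

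For $l=2,3,4$, with $r^{0,l+1}_0=\underline{b}^{-1}\,{}^t(0,0,\underline{b}_i\,{}^te^l,\underline{b}_e\,{}^te^l,0,0)$, the inner product is
\[ \underline{b}^{-1}\, e^l\cdot\Bigl(-\frac{\underline{b}_i}{m_e}\, u_e\times B+\frac{\underline{b}_e}{m_i}\, u_i\times B\Bigr). \]
Summing over $l$ reconstructs the vector $V:=-\frac{\underline{b}_i}{m_e}\, u_e\times B+\frac{\underline{b}_e}{m_i}\, u_i\times B$. The projection is therefore $\underline{b}^{-2}\,{}^t(0,0,\underline{b}_i\,{}^tV,\underline{b}_e\,{}^tV,0,0)$.

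The remaining step is to identify $V$ with $(\sqrt{m_e m_i})^{-1}\,\cW\times B$. Using $\underline{b}_s=\sqrt{\underline{n}/m_s}$ from \eqref{defunderlineb}, we get
\[ \frac{\underline{b}_i}{m_e}=\frac{\sqrt{\underline{n}}}{m_e\sqrt{m_i}}=\frac{\underline{b}_e}{\sqrt{m_e m_i}}, \qquad \frac{\underline{b}_e}{m_i}=\frac{\underline{b}_i}{\sqrt{m_e m_i}} . \]
Hence $V=(\sqrt{m_e m_i})^{-1}(-\underline{b}_e u_e+\underline{b}_i u_i)\times B=(\sqrt{m_e m_i})^{-1}\,\cW\times B$, which is \eqref{tildepi000}.

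No step is a real obstacle; the only points needing care are the bookkeeping of the coefficient identity $\underline{b}_i/m_e=\underline{b}_e/\sqrt{m_em_i}$ and confirming that no retained basis vector has an $E$-slot. That second fact is exactly what kills the $\varrho_s u_s$ contribution.
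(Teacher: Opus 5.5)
Your proposal is correct and matches the paper's proof. Like the paper, you reduce the sum defining $\tilde\Pi^0_0$ to the three velocity basis vectors of \eqref{cestr010}, note that no retained vector has an $E$-slot, and convert the coefficients using $\underline{b}_s=\sqrt{\underline{n}/m_s}$. One harmless indexing slip: you write $r^{0,l+1}_0$ with $e^l$ while saying ``for $l=2,3,4$''; that range should read $l=1,2,3$.
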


\begin{proof} In view of \eqref{cestr010} and \eqref{defdefcU}, we have
\[ \tilde \Pi^0_{0} \, f(\cU) = \sum_{l=1}^7 \bigl(  {}^t \bar r^{0,l}_0 \, f(\cU)\bigr) \ r^{0,l}_0 = \sum_{l=2}^4 \bigl(  {}^t \bar r^{0,l}_0 \, f(\cU)\bigr) \ r^{0,l}_0 \, , \]
which corresponds to the orthogonal projection of the two velocity components $ (u_e,u_i) $ onto the direction $ (\underline{b}_i,\underline{b}_e) $. It suffices to compute
\[ \frac{1}{{\underline b}} \  \Bigl( - \frac{{\underline b}_i}{m_e} \, u_e + \frac{{\underline b}_e}{m_i} \, u_i \Bigr) = \frac{1}{\sqrt{m_e \, m_i}} \ \frac{1}{{\underline b}} \ (- {\underline b}_e \, u_e + {\underline b}_i \, u_i ) \, .  \]
\end{proof}

Taking into account \eqref{cestr010}, more precisely the relation $u_e=\underline{b}_i u_i/\underline{b}_e$ satisfied by $ r^{0,l}_0 $ for $ l = 3 $ and $ l = 4 $, the velocity $ \cW $ associated with $ \tilde \Pi^0_{0} \, \cU_0 $ is just $ 0 $. Thus, as a corollary of Lemma \ref{polarization}, we have
\[ \tilde \Pi^0_{0} \, \bigl \lbrack f \bigl( \Re ( \tilde
\Pi^0_0 \, \cU_0 ) \bigr) \bigr \rbrack = 0 \, , \qquad \tilde \Pi^0_{0} \, \bigl \lbrack f \bigl( \Im ( \tilde
\Pi^0_0 \, \cU_0 ) \bigr) \bigr \rbrack = 0 \, . \]
There remains to examine what happens when $ k \not = 0 $, that is to compute
\[ \cF_1 = \sum_{0 \not =
k\in \ZZ^3} \tilde \Pi^0_{0} \, \bigl \lbrack f \bigl( \Re ( \tilde
\Pi^0_{k} \, \cU_{k} ) \bigr) + f \bigl( \Re ( \tilde
\Pi^0_{k} \, {\rm i} \, \cU_{k} ) \bigr) \bigr \rbrack \, . \]

\begin{lemma}[Transparency of the part of the source term leading to the slow limit model] \label{transpa0}
  We have $\cF_1 = 0 $.
\end{lemma}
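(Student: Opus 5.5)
The plan is to prove the identity $ \cF_1 = 0 $ term by term: for each fixed $ k \not = 0 $, the contribution
\[ \tilde \Pi^0_{0} \, \bigl \lbrack f \bigl( \Re ( \tilde \Pi^0_{k} \, \cU_{k} ) \bigr) + f \bigl( \Re ( \tilde \Pi^0_{k} \, {\rm i} \, \cU_{k} ) \bigr) \bigr \rbrack \]
should vanish. By Lemma \ref{polarization}, $ \tilde \Pi^0_0 f(\cV) $ depends on a real vector $ \cV $ only through $ \cW \times B $, where $ \cW = -\underline{b}_e u_e + \underline{b}_i u_i $ and $ (u_e,u_i,B) $ are the corresponding components of $ \cV $. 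So it suffices to show that the sum of the two vectors $ \cW \times B $, computed for $ \cV = \Re(\tilde \Pi^0_k \cU_k) $ and for $ \cV = \Re({\rm i}\,\tilde \Pi^0_k \cU_k) $, is zero.

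First, I describe $ \tilde \Pi^0_k \cU_k $ through the orthonormal basis $ \{ r^{0,l}_k \}_{l=1,\ldots,4} $ of \eqref{cK0knot0}, writing $ \tilde \Pi^0_k \cU_k = \sum_{l} \alpha_l \, r^{0,l}_k $ with $ \alpha_l \in \CC $. I then compute $ \cW $ and $ B $ for each basis vector, with $ s := (\underline{b}^2+|k|^2)^{1/2} $:
\begin{itemize}
\item For $ l = 1,2 $, the velocities are proportional to $ (\underline{b}_i,\underline{b}_e) $, so $ \cW = 0 $ and $ B = 0 $.
\item For $ r^{0,3}_k $, I get $ \cW = -(|k|\,\underline{b}/s)\,\mathbbm{e}^2 $ and $ B = -{\rm i}\,(\underline{b}/s)\,\mathbbm{e}^3 $.
\item For $ r^{0,4}_k $, I get $ \cW = -(|k|\,\underline{b}/s)\,\mathbbm{e}^3 $ and $ B = +{\rm i}\,(\underline{b}/s)\,\mathbbm{e}^2 $.
\end{itemize}
Writing $ \alpha := \alpha_3 $ and $ \beta := \alpha_4 $, and taking real parts (recall that $ \mathbbm{e}^j_k $ are real), the first vector $ \cV = \Re(\tilde \Pi^0_k \cU_k) $ has
\[ \cW = -\frac{|k|\,\underline{b}}{s}\,\bigl(\Re\alpha\,\mathbbm{e}^2 + \Re\beta\,\mathbbm{e}^3\bigr) , \qquad B = \frac{\underline{b}}{s}\,\bigl(\Im\alpha\,\mathbbm{e}^3 - \Im\beta\,\mathbbm{e}^2\bigr) . \]
Using $ \mathbbm{e}^2 \times \mathbbm{e}^3 = \mathbbm{e}^1 $, this gives
\[ \cW \times B = -\frac{|k|\,\underline{b}^2}{s^2}\,\bigl(\Re\alpha\,\Im\alpha + \Re\beta\,\Im\beta\bigr)\,\mathbbm{e}^1_k . \]
This quantity is generally nonzero, so a single term is not transparent on its own.

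The key step is to pair it with the second vector. Replacing $ \cU_k $ by $ {\rm i}\,\cU_k $ changes $ (\alpha,\beta) $ into $ ({\rm i}\alpha,{\rm i}\beta) $, so $ \Re\alpha\,\Im\alpha $ becomes $ (-\Im\alpha)(\Re\alpha) $, and likewise for $ \beta $. The two contributions therefore cancel exactly for every $ k \not = 0 $, which yields $ \cF_1 = 0 $. The $ k = 0 $ terms were already shown to vanish just before the statement.

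The main obstacle is bookkeeping rather than substance. I must check that the real-part reduction $ \cF_1 = \sum_k \tilde \Pi^0_0 \lbrack f(\Re\,\cdot) + f(\Re\,{\rm i}\,\cdot) \rbrack $ stated above is correctly normalized, and that the signs of the $ B $-components in $ r^{0,3}_k $ and $ r^{0,4}_k $ are used consistently. Equivalently, the whole computation rests on the identity $ f(\Re z) + f(\Im z) = \Re \, \cB(z,\bar z) $ applied with $ \cW $, $ B $ taken in the span of $ r^{0,3}_k $ and $ r^{0,4}_k $. In that form one only needs $ \Re\bigl(\bar{\cW}\times B\bigr) = 0 $, i.e. that $ \bar{\cW} \times B $ is purely imaginary. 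This holds because $ \cW $ has real coefficients times $ (\alpha,\beta) $ while $ B $ has purely imaginary coefficients times $ (\alpha,\beta) $. I would present this version as a cross-check.
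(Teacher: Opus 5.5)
Your proposal is correct and is essentially the paper's proof: expand $\tilde \Pi^0_k \cU_k$ on $\{ r^{0,l}_k \}_{l=1,\ldots,4}$, note that only $r^{0,3}_k$ and $r^{0,4}_k$ contribute to $\cW$ and $B$, compute $\Re \cW_k \times \Re B_k$ as a multiple of $(\Re \gamma_k \, \Im \gamma_k + \Re \delta_k \, \Im \delta_k)\,\mathbbm{e}^1_k$ (your $\alpha,\beta$ are the paper's $\gamma_k,\delta_k$), and cancel it against the term obtained by replacing $\cU_k$ with ${\rm i}\,\cU_k$. Your coefficient $-|k|\,\underline{b}^2/(\underline{b}^2+|k|^2)$ in front of $\mathbbm{e}^1_k$ is the correct one (the paper's displayed prefactor in front of $k$ carries an extra factor $|k|^2$), and this discrepancy has no effect on the cancellation.
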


\begin{proof} Recall that
\[ \tilde
\Pi^0_{k} \, \cU_{k} = \sum_{k=1}^4 ( {}^t \bar r^{0,l}_k \, \cU_k) \ r^{0,l}_k = \alpha_k \, r^{0,1}_k + \beta_k \, r^{0,2}_k + \gamma_k \, r^{0,3}_k + \delta_k \, r^{0,4}_k \, . \]
When computing the velocity $ \cW_k $ and the magnetic field $ B_k $ issued from $ \tilde
\Pi^0_{k} \, \cU_{k} $, in view of \eqref{cK0knot0}, the two vectors $ r^{0,1}_k $ and $ r^{0,2}_k $ do not contribute. There remains
\[ \Re \cW_k = - \frac{\underline{b} \,  |k|}{ (\underline{b}^2 + |k|^{2})^{1/2}} \ (\Re \gamma_k \, \mathbbm{e}^2 + \Re \delta_k \, \mathbbm{e}^3)
\, , \qquad \Re B_k =  \frac{\underline{b}}{ (\underline{b}^2 + |k|^{2})^{1/2}} \ (\Im \gamma_k \, \mathbbm{e}^3 - \Im \delta_k \, \mathbbm{e}^2) \, . \]
It follows that
\[ \Re \cW_k \times \Re B_k = - \frac{\underline{b}^2 \,  |k|^2}{\underline{b}^2 + |k|^{2}} \ (\Re \gamma_k \, \Im \gamma_k + \Re \delta_k \, \Im \delta_k ) \ k \, . \]
Changing $ \cU_k $ into $ {\rm i} \, \cU_k $ is the same as replacing $ \gamma_k $ and $ \delta_k $ respectively by $ {\rm i} \, \gamma_k $ and $ {\rm i} \, \delta_k $. Since
\begin{equation}\label{symanepaso}
\Re ({\rm i} \, \gamma_k ) = - \Im \gamma_k \, , \qquad \Im ({\rm i} \, \gamma_k ) = \Re \gamma_k \, , \qquad \Re ({\rm i} \, \delta_k ) = - \Im \delta_k \, , \qquad \Im ({\rm i} \, \delta_k ) = \Re \delta_k \, ,
\end{equation}
we find that
\begin{equation}\label{symanepasoconc} \Re ({\rm i} \, \gamma_k) \, \Im ({\rm i} \, \gamma_k) = - \Re \gamma_k \, \Im \gamma_k \, , \qquad \Re ({\rm i} \, \delta_k) \, \Im ({\rm i} \, \delta_k) = - \Re \delta_k \, \Im \delta_k \, ,
 \end{equation}
which furnishes the expected cancellation.
\end{proof}

\begin{remark} The prepared configuration is fully studied in  \cite{BC25p}. It leads to the modulation equation (2.13) inside  \cite{BC25p}, or see \eqref{putxmhd}. Since
\[ \int_{\TT^3} (\nabla \times B ) \times B \ dx = 0 \quad \text{when} \quad \nabla \cdot B = 0 \, , \]
from \cite{BC25p}-(2.26) or just \eqref{putxmhd}, we can deduce that
the source term of SLM does not contribute to the time evolution of $ \cU_0 $. This remark is coherent with Lemma \ref{transpa0}. Indeed, since $ \cF_1 = 0 $, the term $ \cF_1 $ is as expected absent from \eqref{evolutionU0}. \hfill $ \circ $
 \end{remark}


\subsection{Case 2}\label{apportcase2} The part $ \cF_2 $ collects all (non-prepared) resonances implying the frequency $ k = 0 $, that is
\[ \begin{array}{rl}
\cF_2 = & \displaystyle \! \! \! \! \!
\sum_{
\sigma'
       \in \{ -,+ \}} \ \sum_{
\jmath',\jmath''
       =1}^3 2 \ \tilde \delta_0^0 \, \tilde \delta_0^{\jmath'} \, \tilde \delta_0^{\jmath''} \
 \Re \, \bigl \lbrack \tilde \Pi^0_0 \, \cB \bigl(\cI_{\sigma'} (\tilde
\Pi^{\jmath'}_0 \cU_0 ) , \cI \,  \cI_{\sigma'} (\tilde \Pi^{\jmath''}_0 \cU_0 ) \bigr) \bigr \rbrack \\
\ + & \displaystyle \! \! \! \! \!
\sum_{
\sigma'
       \in \{ -,+ \}} \ \sum_{
\jmath,\jmath''
       =1}^3 2 \ \tilde \delta_0^\jmath \, \tilde \delta_0^0 \, \tilde \delta_0^{\jmath''} \
 \Re \, \bigl \lbrack \tilde \Pi^\jmath_0 \, \cB \bigl(\cI_{\sigma'} (\tilde
\Pi^0_0 \, \cU_0 ) , \tilde \Pi^{\jmath''}_0 \cU_0 ) \bigr) \bigr \rbrack \\
\ + & \displaystyle \! \! \! \! \!
\sum_{
\sigma''
       \in \{ -,+ \}} \ \sum_{
\jmath,\jmath'
       =1}^3 2 \ \tilde \delta_0^\jmath \, \tilde \delta_0^{\jmath'} \, \tilde \delta_0^0 \
 \Re \, \bigl \lbrack \tilde \Pi^\jmath_0 \, \cB \bigl(\tilde
\Pi^{\jmath'}_0 \, \cU_0 , \cI_{\sigma''} ( \tilde \Pi^0_0 \, \cU_0 ) \bigr) \bigr \rbrack \, .
\end{array}\]
When $ \cU $ (and therefore $ \cU_0 $) is not prepared, the oscillations coming from $ \lambda^1_0 = \underline{b} $ may combine to produce $ \cF_2 $. It is clear that $ \cF_2 $
collects all such interactions and that it is only a function of $ \cU_0 $, while $ \cF_3 $ and $ \cF_4 $ do not depend on $ \cU_0 $ (but on the $ \cU_k $ with $ k \not = 0 $).

It is imporant to deal with $ \cF_2 $ separately, and to determine whether $ \cF_2 $ disappears or not. Indeed, if $ \cF_2 \not = 0 $, by adjusting all $ \cU_k $ with $ k \not = 0 $ such that $ {\cU_k}_{\mid t=0} = 0 $ and $ {\cU_0}_{\mid t=0} $ in such a way that $ \cF_2 \not = 0 $, we can obtain that $ \part_t \cU_0 \not = 0 $. Thus, $ \cU_0 $ does not remain constant, in contrast with the SLM situation. In other words, showing that $ \cF_2 \not = 0 $ is already a way to prove that FLM differs from SLM. Now, since $ \cU_0 $ is real valued, we find that
\[ \cF_2 = 2 \ \tilde \Pi^0_0 \, \bigl \lbrack f \bigl( \Re (\tilde
\Pi^1_0 \, \cU_0) \bigr) + f \bigl( \Im (\tilde
\Pi^1_0 \, \cU_0) \bigr)  \bigr \rbrack + \, 4 \
 \Re \, \tilde \Pi^1_0 \, \bigl \lbrack  \cB (\tilde \Pi^0_0 \, \cU_0 , \tilde
\Pi^1_0 \, \cU_0 )
\bigr \rbrack \, . \]
In view of \eqref{cestlasuite}, the density and magnetic components of $ \tilde
\Pi^1_0 \, \cU_0 $ are just zero. More precisely, with $ \cW = -{\underline b}_e u_e + {\underline b}_i u_i $ as before, we find that
\begin{equation}\label{pi10}
\tilde \Pi^1_0 \, \cU = \frac{1}{2 \, \underline{b}^2} \ {}^t \bigl( 0, 0 , -  \underline{b}_e \, {}^t (\cW - {\rm i} \, \underline{b} \, E ) , \underline{b}_i \, {}^t (\cW - {\rm i} \, \underline{b} \, E ), {\rm i} \,  \underline{b} \, {}^t (\cW - {\rm i} \, \underline{b} \, E ), 0 \bigr) \, .
\end{equation}
Applying Lemma \ref{polarization}, there remains $ \cF_2 = 4 \
 \Re \, \tilde \Pi^1_0 \, \bigl \lbrack  \cB (\tilde \Pi^0_0 \, \cU_0 , \tilde
\Pi^1_0 \, \cU_0 )
\bigr \rbrack $. Next compute
\[ \tilde \Pi^0_0 \, \cU = \frac{1}{\underline{b}^2} \ {}^t \bigl( c \, \underline{b}^2 \, \sqrt{\underline{p}'_e}, c \,  \underline{b}^2 \,  \sqrt{\underline{p}'_i} , \underline{b}_i \, {}^t (\underline{b}_i \, u_e + \underline{b}_e \, u_i ) , \underline{b}_e \, {}^t (\underline{b}_i \, u_e + \underline{b}_e \, u_i ) , 0 ,  \underline{b}^2 \,  {}^t B \bigr) \, ,
\]
where
\[ c \equiv c(\varrho_e,\varrho_i) := \bigl( \sqrt{\underline{p}'_e} \, \varrho_e + \sqrt{\underline{p}'_i} \, \varrho_i \bigr) / (\underline{p}'_e + \underline{p}'_i) \, .  \]
Coming back to \eqref{defdefcUbi}, with $ \cW_0 = -{\underline b}_e u_{e0} + {\underline b}_i u_{i0} $, we can extract
\[ \begin{array}{rl}
\cB (\tilde \Pi^0_0 \, \cU_0 , \tilde
\Pi^1_0 \, \cU_0 ) \! \! \! & \displaystyle = \frac{1}{2 \, \underline{b}^2} \ {}^t \Bigl( 0, 0 , \frac{\underline{b}_e}{m_e} \, {}^t \bigl( (\cW_0 - {\rm i} \, \underline{b} \, E_0 ) \times B_0 \bigr) , \frac{\underline{b}_i}{m_i} \, {}^t \bigl( (\cW_0 - {\rm i} \, \underline{b} \, E_0 ) \times B_0 \bigr) , \\
\ & \displaystyle \qquad \qquad \qquad \qquad \qquad \qquad \quad - c \, \bigl((\underline{b}_e/ \sqrt m_e) + ( \underline{b}_i/ \sqrt m_i) \bigr) \,
{}^t (\cW_0 - {\rm i} \, \underline{b} \, E_0 ), 0 \Bigr) \, . \end{array} \]
At the end, we obtain
$ \cF_2 = {}^t (0,0, {}^t \cF_2^{u_e}, {}^t \cF_2^{u_i},{}^t \cF_2^E, 0) $ with
\[ \begin{array}{l}\displaystyle \cF_2^{u_e} := - (\underline{b}_e / 4  \, \underline{b}^4) \ \Bigl( \bigl( - (\underline{b}_e^2/ m_e) + ( \underline{b}_i^2/ m_i) \bigr) \ \cW_0 \times B_0 +\  \underline{b}^2 \bigl((\underline{b}_e/ \sqrt m_e) + ( \underline{b}_i/ \sqrt m_i) \bigr) \ c \,  E_0 \Bigr) \, , \smallskip \\
\displaystyle \cF_2^{u_i} := + (\underline{b}_i / 4  \, \underline{b}^4) \ \Bigl( \bigl( - (\underline{b}_e^2/ m_e) + ( \underline{b}_i^2/ m_i) \bigr) \ \cW_0 \times B_0 + \ \underline{b}^2 \bigl((\underline{b}_e/ \sqrt m_e) + ( \underline{b}_i/ \sqrt m_i) \bigr)  \ c \,  E_0 \Bigr) \,, \smallskip \\
\displaystyle \cF_2^{E} := + (1 / 4  \, \underline{b}^2) \ \Bigl( \bigl( - (\underline{b}_e^2/ m_e) + ( \underline{b}_i^2/ m_i) \bigr) \ E_0 \times B_0 -  \bigl((\underline{b}_e/ \sqrt m_e) + ( \underline{b}_i/ \sqrt m_i) \bigr) \, c(\varrho_{e0},\varrho_{i0}) \ \cW_0 \Bigr) \, .
   \end{array}
\]
Let us consider the equation
$ \part_t \cU_0 = \cF_2 (\cU_0) $. The density components $ \varrho_{s0} $ remain unchanged, while the time evolution of $ B_0 $ is not modified\footnote{It can be  altered by the other source terms $ \cF_* $ and the quasilinear terms.} by $ \cF_2 $. We can assert that
\[ \varrho_{s0} (t) = \varrho_{s00} := {\varrho_{s0}}_{\mid t=0} \, , \qquad c\bigl( \varrho_{e0} (t) , \varrho_{i0} (t) \bigr) = c_0 := c ( \varrho_{e00} , \varrho_{i00} ) \qquad \forall t \in \RR \, . \]
From there, we can deduce that
\begin{equation} \label{electriccoupling}
\left \{ \begin{array}{l}
             \part_t \cW_0 = - \alpha \ \cW_0 \times B_{00} + \beta \ E_0 \, , \vspace{ 2pt} \\
             \part_t E_0 \, = - \alpha \ E_0 \times B_{00} - (\beta /\underline{b}^2) \ \cW_0 \, ,
            \end{array}
\right.
\end{equation}
where the coefficients $\alpha$ and  $\beta$ are given by
\begin{equation}
  \label{coefabg}
  \alpha=\frac{\underline{n}}{4\underline{b}^2}\bigg(\frac{1}{m_e^2}- \frac{1}{m_i^2}\bigg)\,, \quad
  \beta=\frac{1}{4}\bigg(\frac{\underline{b}_e}{ \sqrt m_e} + \frac{ \underline{b}_i}{ \sqrt m_i} \bigg) \, c_0 \,.
\end{equation}
Observe that  $ \alpha \in \RR_+^* $ (because $m_e < m_i$), while the sign of $ \beta \in \RR$ depends on the signs of the chosen initial densities $ \varrho_{e00} $ and $ \varrho_{i00}$. 
We see that
\[  \vert \cW_0 (t) \vert^2 /\underline{b}^2+ \ \vert E_0 (t) \vert^2 =  \vert \cW_0 (0) \vert^2 /\underline{b}^2 +  \vert E_0 (0) \vert^2 \, , \qquad \forall t \in \RR_+ \, . \]
The above nonnegative quantity plays the part of an energy. Both $ \cW_0 $ and $ E_0 $ remain bounded. But exchanges (rotations) between $ \cW_0 $ and $ E_0 $ do occur. 
Assume that 
\[ E_{00} := {E_0}_{\mid t=0} = 0 \, , \qquad \cW_{00} := {\cW_0}_{\mid t=0} \not = 0 \, , \qquad c_0 \not = 0 \ (\Rightarrow \beta \not = 0 ) \, . \]
It follows that $ \part_t E_0 \not = 0 $ so that $ E_0 (t)\not = 0 $ for small enough values of $ t \in \RR_+ $.
In the unprepared situation, a defect of neutrality (embodied in the condition $ c_0 \not = 0 $) and the consideration of two-fluid effects (represented by $ \cW_0 \not = 0 $) is correlated with the presence of a non-trivial electric component $ E_0 \not = 0 $. This already indicates that FLM cannot coincide with SLM. It is also clear that, due to resonances, the kinetic energy of charged particles can be converted into electrical energy (and conversely).


\subsection{Case 3}\label{apportcase3} Here, we have to deal with
\[ \begin{array}{rl}
\cF_3 := \! \! \! & \displaystyle
\sum_{
\sigma' \in \{ -,+ \}} \ \sum_{\jmath'=1}^3 \ \sum_{
0 \not = k} 2 \ \tilde \delta_0^0 \, \tilde \delta_k^{\jmath'} \, \tilde \delta_k^{\jmath'} \ \Re \, \bigl \lbrack \tilde \Pi^0_0 \, \cB \bigl(\cI_{\sigma'} (\tilde
\Pi^{\jmath'}_{\sigma' k} \cU_{\sigma' k} ) , \cI \,  \cI_{\sigma'} (\tilde \Pi^{\jmath'}_{\sigma' k} \cU_{\sigma' k} ) \bigr) \bigr \rbrack \, . \\
= \! \! \! & \displaystyle
\sum_{
\sigma' \in \{ -,+ \}} \ \sum_{\jmath'=1}^3 \ \sum_{
0 \not = k} \tilde \Pi^0_0 \, \bigl \lbrack f \bigl( \Re (\tilde
\Pi^{\jmath'}_{\sigma' k} \cU_{\sigma' k} ) \bigr )
+ f \bigl( \Im (\tilde
\Pi^{\jmath'}_{\sigma' k} \cU_{\sigma' k} ) \bigr ) \bigr \rbrack \, .
\end{array} \]
Changing $ k $ into $ -k $ allows to absorb $ \sigma'= - $. There remains
\[ \cF_3 = 2 \sum_{\jmath'=1}^3 \ \sum_{
0 \not = k} \tilde \Pi^0_0 \, \bigl \lbrack f \bigl( \Re (\tilde
\Pi^{\jmath'}_{k} \cU_{k} ) \bigr )
+ f \bigl( \Im (\tilde
\Pi^{\jmath'}_{k} \cU_{k} ) \bigr ) \bigr \rbrack = 2 \sum_{\jmath'=1}^3 \ \sum_{
0 \not = k} \tilde \Pi^0_0 \, \bigl \lbrack f \bigl( \Re (\tilde
\Pi^{\jmath'}_{k} \cU_{k} ) \bigr )
+ f \bigl( \Re (\tilde
\Pi^{\jmath'}_{k} {\rm i}\, \cU_{k} ) \bigr ) \bigr \rbrack \, .\]
There is an important difference between the status of $ \cF_1 $, $ \cF_2 $ and $ \cF_4 $ on the one hand, and $ \cF_3 $ on the other hand. The amplitude of $ \cF_j $ with $ j \in \{ 1,2,4\} $ is bounded by the $ L^2 $-norm of the function $ \cU $, while the size of $ \cF_3 $ may be  not\footnote{In the presence of an $ L^2 $-estimate on $ \cU $ but without enough $ H^s $-regularity (with $ s > 0 $ large enough), the sum defining $ \cF_3 $ may diverge. The expression $ \cF_3 $ is sensitive to what happens at high frequencies.}. When dealing with solutions having a low level of regularity, the impact of $ \cF_3 $ may be uncontrolled.
But the existence of non trivial (non prepared) resonances leading to $ \cF_3 $ does not preclude the elimination of $ \cF_3 $ due to diverse cancellations. This is the phenomenon of {\it transparency} in nonlinear geometric optics, which is verified concerning $ \cF_3 $ as stated below.

\begin{lemma}[Transparency of the part of the source term involving unbounded frequencies]\label{transpa3}
  We have $\cF_3 = 0 $.
\end{lemma}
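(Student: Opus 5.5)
The plan is to reduce the claim to a term-by-term cancellation. By Lemma \ref{polarization}, $\tilde \Pi^0_0 f(\cV)$ depends on $\cV$ only through the vector $\cW\times B$, where $\cW=-\underline{b}_e u_e+\underline{b}_i u_i$. It therefore suffices to show that, for each fixed $k\neq 0$ and each $\jmath'\in\{1,2,3\}$,
\[ \Re \cW_k \times \Re B_k \;+\; \Re({\rm i}\,\cW_k)\times \Re({\rm i}\,B_k) = 0 , \]
where $\cW_k$ and $B_k$ are the velocity combination and magnetic component of $V:=\tilde \Pi^{\jmath'}_k \cU_k$. Since $\Re({\rm i} z)=-\Im z$, the left-hand side equals $\Re\cW_k\times\Re B_k+\Im\cW_k\times\Im B_k=\Re\bigl(\bar\cW_k\times B_k\bigr)$. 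So the statement reduces to showing that $\bar\cW_k\times B_k$ is purely imaginary for every $V$ in the relevant eigenspace.

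I would go through the eigenspaces listed in Paragraphs \ref{para1}, \ref{para2} and \ref{para4}.

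For the longitudinal modes $r^l_{\mypar k}$ and $r^r_{\mypar k}$ from \eqref{rparallellk}, the $B$-component vanishes. Hence $B_k=0$ and the contribution is zero immediately; this covers every $\jmath'$ attached to $\lambda^l_{\mypar k}$ or $\lambda^r_{\mypar k}$.

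For the transverse eigenvalue $\lambda_{\perp k}$, the eigenspace is two-dimensional, spanned by $r^1_{\perp k}$ and $r^2_{\perp k}$ from \eqref{cK0kperp1}--\eqref{cK0kperp2}. Write $V=\alpha\, r^1_{\perp k}+\beta\, r^2_{\perp k}$ and set $N=\sqrt{2(\underline{b}^2+|k|^2)}$. From the explicit vectors,
\[ \cW_k = -\frac{\underline{b}^2}{N}\,(\alpha\,\mathbbm{e}^3+\beta\,\mathbbm{e}^2), \qquad B_k = \frac{{\rm i}\,|k|}{N}\,(-\alpha\,\mathbbm{e}^2+\beta\,\mathbbm{e}^3). \]
Computing the cross product with $\mathbbm{e}^3\times\mathbbm{e}^2=-\mathbbm{e}^1$ and $\mathbbm{e}^2\times\mathbbm{e}^3=\mathbbm{e}^1$ gives
\[ \bar\cW_k\times B_k = -\frac{{\rm i}\,\underline{b}^2|k|}{N^2}\,\bigl(|\alpha|^2+|\beta|^2\bigr)\,\mathbbm{e}^1 . \]
This vector is purely imaginary, since the real vector $\mathbbm{e}^1$ is multiplied by an imaginary scalar. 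The mixed terms $\bar\alpha\beta$ drop out because $\mathbbm{e}^3\times\mathbbm{e}^3=\mathbbm{e}^2\times\mathbbm{e}^2=0$. Its real part is therefore zero, which is the desired cancellation. This is the same mechanism as in Lemma \ref{transpa0}, where the sign flip \eqref{symanepasoconc} under multiplication by ${\rm i}$ did the work.

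The main obstacle I expect is bookkeeping rather than analysis. First, I need to check that the first expression for $\cF_3$ (with $\cI\,\cI_{\sigma'}$) really reduces to $f(\Re V)+f(\Im V)$, i.e.\ to the hermitian form $\Re(\bar\cW\times B)$ rather than a bilinear one. Second, I need to confirm that no other eigenvalue coincidence for special $(\underline{a}_e,\underline{a}_i)$ mixes a transverse vector with a longitudinal one inside one $\tilde\Pi^{\jmath'}_k$. Even if such a mixing occurred, the longitudinal components have $B=0$ and $\cW$ parallel to $\mathbbm{e}^1$, while the transverse $B$ lies in $\mathrm{span}(\mathbbm{e}^2,\mathbbm{e}^3)$. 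The cross terms would then be $\mathbbm{e}^1\times(\text{transverse }B)$ with coefficient $\bar a\,{\rm i}\,b$; these are not automatically imaginary. So I would handle this case by invoking the standing assumption $(\underline{a}_e,\underline{a}_i)\neq(1,1)$ and the separation of eigenvalues on the eigenspace decomposition. If a crossing at some bounded $k$ remains, I would argue by orthogonality of the eigenspaces of $\cL_k$, which \eqref{formcY} already encodes.
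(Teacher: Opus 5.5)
Your core argument is correct and is the paper's argument in a slightly tidier form. The paper evaluates $\Re\cW_k\times\Re B_k$ on $\tilde\Pi_{\perp k}\cU_k$, obtains a multiple of $(\Re\gamma_k\Im\gamma_k+\Re\delta_k\Im\delta_k)\,\mathbbm{e}^1$, and uses the sign change under $\cU_k\mapsto{\rm i}\,\cU_k$. You instead combine both terms into $\Re(\bar\cW_k\times B_k)$ and observe that $\bar\cW_k\times B_k$ is an imaginary multiple of $\mathbbm{e}^1$. In both, the longitudinal modes drop out because $B=0$. Your first bookkeeping worry is harmless: $\cI\,\cI_{\sigma'}=\cI_{-\sigma'}$ makes the two arguments of $\cB$ equal to $Z$ and $\bar Z$, and $\cB(Z,\bar Z)=f(\Re Z)+f(\Im Z)$ by symmetry of $\cB$.

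The fallback in your last paragraph would fail, however. Neither $(\underline{a}_e,\underline{a}_i)\neq(1,1)$ nor Lemma~\ref{Classification} excludes crossings, since that lemma only separates the eigenvalues for $|k|\geq m$. From the formula for $P\bigl((\lambda_{\perp k})^2\bigr)$ in its proof, $\lambda_{\perp k}\in\{\lambda^l_{\mypar k},\lambda^r_{\mypar k}\}$ exactly when $(1-\underline{a}_e^2)(1-\underline{a}_i^2)|k|^2+(1-\underline{a}_e^2)\underline{b}_e^2+(1-\underline{a}_i^2)\underline{b}_i^2=0$. This has solutions in $\ZZ^3\setminus\{0\}$ for special parameters with $\max(\underline{a}_e,\underline{a}_i)>1$.

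At such a $k$ the transverse and longitudinal vectors lie in the \emph{same} eigenspace. So $\tilde\Pi^{\jmath'}_k$ does not separate them, and their orthogonality says nothing about a quadratic form evaluated on their sum. Take $V=\alpha\, r^1_{\perp k}+\beta\, r^2_{\perp k}+\gamma\, r^\star_{\mypar k}$. The longitudinal part contributes $\cW=\gamma\, w\,\mathbbm{e}^1$ with $w=-c^\star_k\,\lambda^\star_{\mypar k}\,\underline{b}_i(\underline{b}_e^2+\underline{d}^\star_k)\in\RR$, and then $\Re(\bar\cW\times B)=\frac{w|k|}{N}\bigl(\Im(\bar\gamma\alpha)\,\mathbbm{e}^3+\Im(\bar\gamma\beta)\,\mathbbm{e}^2\bigr)$. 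At the crossing, $\underline{b}_e^2+\underline{d}^\star_k=\underline{b}^2+(1-\underline{a}_e^2)|k|^2$; combined with the crossing equation, this shows $w\neq0$ unless $\underline{a}_e=\underline{a}_i$. Now choose $\cU_k$ in that eigenspace with $\Im(\bar\gamma\alpha)\neq0$, set $\cU_{-k}=\bar\cU_k$ and all other modes to zero. The $-k$ term vanishes and $\cF_3\neq0$.

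So the crossing case cannot be argued away. The lemma needs the hypothesis that no such crossing occurs on $\ZZ^3\setminus\{0\}$; this is automatic when $\underline{a}_e,\underline{a}_i\leq1$, in particular under Assumption~\ref{theascoin}. The paper's proof uses this hypothesis tacitly when it asserts that each $\jmath'\neq0$ selects either $\perp$ or $\mypar$. State that hypothesis explicitly instead of the fallback.
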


\begin{proof} The choice of $ \jmath' \not = 0 $ corresponds either to the selection of the symbol $ \perp $ or $ \mypar $. We start by looking at
\[ \tilde
\Pi_{\perp k} \, \cU_{k} := \gamma_k \ r^1_{\perp k} + \delta_k \ r^2_{\perp k} \, , \qquad \gamma_k := {}^t \bar r^1_{\perp k} \, \cU_k \, , \qquad \delta_k := {}^t \bar r^2_{\perp k} \, \cU_k \, . \]
Coming back to \eqref{cK0kperp1} and  \eqref{cK0kperp2}, we find that
\[ \begin{array}{rr}
 \Re (\tilde
\Pi_{\perp k} \, \cU_{k} ) \! \! \! & \displaystyle = \frac{1}{\sqrt{2(\underline{b}^2+|k|^2)}} \ {}^t
\bigl( 0, 0,  \underline{b}_e \, {}^t (\Re \gamma_k \, \mathbbm{e}^3 + \Re \delta_k \, \mathbbm{e}^2 ), -  \underline{b}_i \, {}^t (\Re \gamma_k \, \mathbbm{e}^3 + \Re \delta_k \, \mathbbm{e}^2 ), \quad \\
& \qquad \qquad \sqrt{\underline{b}^2+|k|^2} \ {}^t (\Im \gamma_k \, \mathbbm{e}^3 + \Im \delta_k \, \mathbbm{e}^2 ) , \vert k \vert \ {}^t (\Im \gamma_k \, \mathbbm{e}^2 - \Im \delta_k \, \mathbbm{e}^3 ) \bigr) \, .
\end{array} \]
Applying Lemma \ref{polarization}, we have to compute the corresponding expression $ \cW \times B $, which is
\[ \begin{array}{rl}
\cW \times B =    -  \underline{b}^2 \ \vert k \vert \, (\Re \gamma_k \, \mathbbm{e}^3 + \Re \delta_k \, \mathbbm{e}^2 ) \times (\Im \gamma_k \, \mathbbm{e}^2 - \Im \delta_k \, \mathbbm{e}^3 ) = \underline{b}^2 \ \vert k \vert \, (\Re \gamma_k \,  \Im \gamma_k  + \Re \delta_k \, \Im \delta_k ) \, \mathbbm{e}^1 \, .
   \end{array} \]
Changing $ \cU_k $ into $ {\rm i} \, \cU_k $ is the same as replacing $ \gamma_k $ and $ \delta_k $ respectively by $ {\rm i} \, \gamma_k $ and $ {\rm i} \, \delta_k $. We still have \eqref{symanepaso} and \eqref{symanepasoconc} so that
\[ \tilde \Pi^0_0 \, \bigl \lbrack f \bigl( \Re (\tilde
\Pi_{\perp k} \, \cU_{k} ) \bigr )
+ f \bigl( \Re (\tilde
\Pi_{\perp k} \, {\rm i}\, \cU_{k} ) \bigr ) = 0 \, . \]
We turn next to the study of $ \tilde \Pi^\star_{\mypar k} \, \cU_{k} := \gamma_k \ r^\star_{\mypar k} $ where $ \gamma_k := {}^t \bar r^\star_{\mypar k} \, \cU_k $ with $ \star \in \{ l,r \} $. The content of $ r^\star_{\mypar k} $ is prescribed at the end of Paragraph \ref{para4}. Since the magnetic component of $ r^\star_{\mypar k} $ is just $ 0 $, from Lemma \ref{polarization} again, we can assert that
\[ \tilde \Pi^0_0 \, \bigl \lbrack f \bigl( \Re (\tilde
\Pi^\star_{\mypar k} \, \cU_{k} ) \bigr )
+ f \bigl( \Re (\tilde
\Pi^\star_{\mypar k} \, {\rm i}\, \cU_{k} ) \bigr ) = 0 \, , \qquad \forall \star \in \{ l,r \} \, . \]
\end{proof}


\subsection{Case 4}\label{apportcase3bis}
By construction, we have 
\begin{equation}\label{c'estF4} 
\cF_4 = \sum_{
\sigma',\sigma''
       \in \{ -,+ \}} \ \sum_{\jmath',\jmath''
       =0}^3 \ \sum_{\bigl \{ k \not = 0 \, ; \, k \in
 R^{1,\jmath',\jmath''}_{\sigma',\sigma''} \bigr \} } \cF_{4,\sigma',\sigma''}^{\jmath',\jmath''} (k) \, ,  
\end{equation}
where
\[ \cF_{4,\sigma',\sigma''}^{\jmath',\jmath''} (k) := 2 \ \tilde \delta_k^{\jmath'} \, \tilde \delta_k^{\jmath''} \ \Re \, \bigl \lbrack \tilde \Pi^1_0 \, \cB \bigl(\cI_{\sigma'} (\tilde
\Pi^{\jmath'}_{\sigma' k} \cU_{\sigma' k} ) , \cI_{\sigma''} (\tilde \Pi^{\jmath''}_{- \sigma'' k} \cU_{- \sigma'' k} ) \bigr) \bigr \rbrack \, . \]
As explained in Subsection \ref{jpournot0} the set $ R^{1,\jmath',\jmath''}_{\sigma',\sigma''} $
is in general empty, so that $ \cF_4 =  0 $.
Still, for well-chosen values of $ \underline{a} $ and $ \underline{b} $, the sum inside $ \cF_4 $ may contain effective contributions. For the sake of completeness, we want here to examine a special situation leading to $ \cF_4 \not =  0 $. To clarify this point, we work under Assumption \ref{theascoin} and we impose \eqref{threesquare}, taking for granted that $ \underline{a} \not = 1 / \sqrt 3 $ as well as $ \underline{a}^{-1} \not = 2 g_\pm ( \underline{a} ) $ (see Lemma~\ref{condabsre} with $|k|=\underline{b}/(2\underline{a})$). Recall from Paragraph \ref{jpournot0sp} that this corresponds to the choice of $ (\jmath',\sigma'') = (0,+) $ - or $ (\jmath'',\sigma') = (0,+) $ - while $ \jmath'' $ - or $ \jmath' $ - must be adjusted in such a way that $ \lambda^{\jmath''}_k = \lambda_{\mypar k}^l $ - or $ \lambda^{\jmath'}_k = \lambda_{\mypar k}^l $. Hence, the selection of the parallel mode $ \mypar $. In this way, we can be sure that the last sum inside \eqref{c'estF4} is reduced to
\[ R_\mypar := \bigl \lbrace \, 0 \not = k \in \ZZ^3 \, ; \, \vert k \vert = \underline{b} / \underline{a} \, \bigr \} \, , \qquad \operatorname{card} R_\mypar >0 \, . \]
There remains to consider
\[ \begin{array}{rl} 
\cF_4 = \! \! \! & \displaystyle \sum_{k \in R_\mypar} \bigl \lbrace \Re \, \bigl \lbrack \tilde \Pi^1_0 \, \cB \bigl(\cI_- (\tilde
\Pi^0_{-k} \, \cU_{-k})  , \tilde \Pi^l_{\mypar (- k)} \, \cU_{- k} \bigr) \bigr \rbrack + 
\Re \, \bigl \lbrack \tilde \Pi^1_0 \, \cB (\tilde
\Pi^0_{k} \, \cU_{k}  , \tilde \Pi^l_{\mypar (- k)} \, \cU_{-k} ) \bigr \rbrack \smallskip \\
\ & \qquad + \Re \, \bigl \lbrack \tilde \Pi^1_0 \, \cB \bigl(\tilde \Pi^l_{\mypar k} \, \cU_{ k} , \cI_- (\tilde \Pi^0_{k} \, \cU_{k})  \bigr) \bigr \rbrack + \Re \, \bigl \lbrack \tilde \Pi^1_0 \, \cB (\tilde \Pi^l_{\mypar k} \, \cU_{ k} , \tilde \Pi^0_{-k} \, \cU_{-k}) \bigr \rbrack \bigr \rbrace \, .
\end{array} \]
Since $ \cI_{-} (\tilde
\Pi^0_{- k} \, \cU_{- k} ) = \tilde \Pi^0_{k} \, \cU_{k} $ and because $ \cB $ is symmetric, this can be reformulated as 
\[ \cF_4 = 2 \sum_{k \in R_\mypar} \cF_{4k} \, , \qquad \cF_{4k} := \Re \, \bigl \lbrack \tilde \Pi^1_0 \, \cB (\tilde
\Pi^0_{k} \, \cU_{k}  , \tilde \Pi^l_{\mypar (- k)} \, \cU_{- k} ) \bigr \rbrack + 
\Re \, \bigl \lbrack \tilde \Pi^1_0 \, \cB (\tilde
\Pi^0_{-k} \, \cU_{-k}  , \tilde \Pi^l_{\mypar k} \, \cU_{k} ) \bigr \rbrack \, . \]
When defining $ \cF_{4k} $, care has been taken to put the opposite frequencies $ k $ and $ -k $ together in order to ensure that $ \cF_{4k} = \cF_{4(-k)} $. By this way, the contributions inside  $ \cF_{4k} $ coming  from $ +k $ and $ -k $ can (potentially) compensate one another. Let us now concentrate on $ \cF_{4k} $ with $ 0 \not = k \in \NN^3 $ as indicated in \eqref{threesquare}. The starting point is the spectral decomposition \eqref{decompocU}, where we can forget the terms which are eliminated under the projections $ \tilde
\Pi^0_{k} $ and $ \tilde \Pi^l_{\mypar (\pm k)} $, that is\footnote{We take care below to clearly distinguish between the counting index ``$ \ell $'' and the symbol``$ l $''  for left.}
\[ \cU_k = \beta^l_k \ J r^l_{\mypar k} +
\sum_{\ell=1}^4 \, \alpha^\ell_k \ r^{0,\ell}_k +  \gamma^l_k \ r^l_{\mypar k} \, . \]
To guarantee that $ \bar \cU_{-k} = \cU_k $, in view of \eqref{cK0knot0} and \eqref{rparallellk}, we must impose 
\[ \alpha^\ell_{-k} = - \bar \alpha^\ell_k , \quad \forall \ell \in \{1,3 \} \, , \qquad \alpha^\ell_{-k} = \bar \alpha^\ell_k , \quad \forall \ell \in \{2 ,4 \} \, , \qquad \beta^l_{-k} = - \bar \gamma^l_k \, , \qquad \gamma^l_{-k} = - \bar \beta^l_k \, . \]
By construction, we have
\[ \tilde
\Pi^0_{k} \, \cU_{k} = \sum_{\ell=1}^4 \, \alpha^\ell_k \ r^{0,\ell}_k \, , \qquad \tilde \Pi^l_{\mypar (- k)} \, \cU_{- k} = \gamma^l_{-k} \ r^l_{\mypar (-k)} \, . \]
Let us consider
\[ \cB (\tilde
\Pi^0_{k} \, \cU_{k}  , \tilde \Pi^l_{\mypar (- k)} \, \cU_{- k} ) = \sum_{\ell=1}^4 \alpha^\ell_k \ \gamma^l_{-k} \ \cB(r^{0,\ell}_k , r^l_{\mypar (-k)} ) \, . \]
In our context, since $ \underline{d}^l_k = - \underline{b}_e^2 $, we deal with
\begin{equation}
  \label{eqn:newrpl}
\begin{array}{rl}  
{}^t r^l_{\mypar k} \! \! \! & = c^l_k \ \underline{a} \ \underline{b}_e \ \vert k \vert \  (-\underline{b}_i,-\underline{b}_e,\underline{b}_i \,  {}^t \mathbbm{e}^1 , \underline{b}_e \,  {}^t \mathbbm{e}^1  , 0,0) \smallskip \\
\ & = (1/ \sqrt 2 \, \underline{b}) \ {}^t (-\underline{b}_i,-\underline{b}_e,\underline{b}_i \,  {}^t \mathbbm{e}^1,\underline{b}_e \, {}^t  \mathbbm{e}^1, 0,0)  =  {}^t \bar r^l_{\mypar k} \, .
\end{array}
\end{equation}
From \eqref{defdefcUbi}, knowing that the density components of $ \cU $ are zero (as in $ \tilde
\Pi^0_{k} \, \cU_{k} $) and that the magnetic component of $ \tilde \cU $ is zero (as in $ \tilde
\Pi^{\jmath''}_{-k} \, \cU_{-k} = \tilde
\Pi^l_{\mypar (-k)} \, \cU_{-k} $), we find that
\begin{equation}
  \label{eqn:BUtU}
\cB (\cU,\tilde \cU) = {}^t \Bigl( 0,0, - \frac{{}^t (\tilde u_e \times B )}{m_e}, \frac{{}^t (\tilde u_i \times B)}{m_i},\frac{\tilde \varrho_e \,  {}^t u_e}{\sqrt{m_e \, \underline{p}'_e}} - \frac{\tilde \varrho_i \,  {}^t u_i }{\sqrt{m_i \, \underline{p}'_i}}, 0
\Bigr) .
\end{equation}
With $ r^{0,1}_k $ as in \eqref{cK0knot0}, we get that
\[ \cB ( r^{0,1}_k , r^l_{\mypar (-k)} ) = {}^t (0,0,0,0 , \cB^E , 0 ) \]
with 
\[ \cB^E := \frac{1}{\sqrt 2 \, \underline{b}^2}  \ \left( \frac{\underline{b}_e^2}{\sqrt {m_i \, \underline{p}'_i}} - \frac{\underline{b}_i^2}{\sqrt {m_e \, \underline{p}'_e}}  \right) \ \mathbbm{e}^2 = \frac{1}{\sqrt 2 \, \underline{b}^2 \, \underline{a}}  \ \left( \frac{\underline{b}_e^2}{m_i} - \frac{\underline{b}_i^2}{m_e}  \right) \ \mathbbm{e}^2 = 0 \, . \]
The case of $ r^{0,2}_k $ is similar. For the contribution associated with $ r^{0,3}_k$, we first recall from the choice of $ {\underline a}_e = {\underline a}_i = {\underline a} $, that we have $ \sqrt{\underline{p}_s' \, m_s} = \underline a \, m_s $. Then, using \eqref{eqn:newrpl} and \eqref{eqn:BUtU}, it suffices to look at
\[ \begin{array}{l}
  \cB( r^{0,3}_k , r^l_{\mypar (-k)} ) = {}^t (\cB^{\varrho_e}, \cB^{\varrho_i},  {}^t \cB^{u_e},   {}^t \cB^{u_i},  {}^t  \cB^{E},  {}^t  \cB^{B}) \\  
 \qquad \qquad \displaystyle = {}^t \bigg (0,0, - \frac{{\rm i} \, \underline{b}_i \,  {}^t\mathbbm{e}^2}{m_e \, \sqrt{ 2\underline{b}^2 + 2| k |^2}}, 
    \frac{{\rm i} \, \underline{b}_e \, {}^t\mathbbm{e}^2}{m_i \, \sqrt{2  \underline{b}^2 + 2  \vert k \vert^2}} ,
   - \bigg( \frac{1}{m_e} + \frac{1}{m_i}\bigg)\frac{\underline{b}_e \, \underline{b}_i \, |k|\, {}^t\mathbbm{e}^2}{\underline{a}\underline{b}^2\sqrt{2  \underline{b}^2 + 2  \vert k \vert^2}},0
      \bigg) .
\end{array} \]  
For the contribution associated with $r^{0,4}_k$, we obtain the same  sort of expression, except that $ \mathbbm{e}^2$ must be everywhere replaced by $ \mathbbm{e}^3$. Remark that the above expression is odd with respect to $ k $ - and it is even when $ \ell = 4 $ - so that
\[ \begin{array}{rl} 
 \cB (\tilde \Pi^0_{k} \, \cU_{k}  , \tilde  \Pi^l_{\mypar (- k)} \, \cU_{- k} ) + \cB  (\tilde \Pi^0_{-k} \, \cU_{-k}  , \tilde  \Pi^l_{\mypar k} \, \cU_{k} ) = \! \! \! &  + (\alpha^3_k \, \gamma^l_{-k} - 
 \alpha^3_{-k} \, \gamma^l_{k} ) \ \cB( 
r^{0,3}_k , r^l_{\mypar (-k)} )  \\
 \ & +  (\alpha^4_k \, \gamma^l_{-k} + 
  \alpha^4_{-k} \, \gamma^l_{k} ) \ \cB( 
 r^{0,4}_k , r^l_{\mypar (-k)} ) \, , 
\end{array} \]
which is even as expected. There is no further cancellation coming from $ \tilde \Pi^1_0 $. As a matter of fact, coming back to \eqref{pi10}, it suffices to check that the expression $ \cW - {\rm i} \, \underline{b} \, E $ involved in the present context is non-trivial. Knowing that $ \vert k \vert =
\underline{b}/\underline{a} $, this means to look at
\begin{equation}
  \label{eqn:nzcF4}
-\underline{b}_e \, \cB^{u_e} + \underline{b}_i \, \cB^{u_i} - {\rm i} \, \underline{b} \, \cB^E =  \bigg( \frac{1}{m_e} + \frac{1}{m_i}\bigg)\bigg( 1 + \frac{1}{\underline a^2}\bigg) 
\frac{{\rm i} \, \underline{b}_e \, \underline{b}_i}{\sqrt{2  \underline{b}^2 + 2  \vert k \vert^2}} \ \mathbbm{e}^2 \, .
\end{equation}
A similar calculation holds concerning $ \ell = 4 $, yielding this time a vector pointing in the direction of $ \mathbbm{e}^3 $ which therefore cannot compensate the above result. Since $ \alpha^3_k \, \gamma^l_{-k} - \alpha^3_{-k} \, \gamma^l_{k} $ can be adjsuted as wanted,   an effective contribution can be produced in this way. This means that the presence of a coupling inside \eqref{evolutionU0} of $ \cU_0 $  with the modes $ \cU_k $ can become fully apparent. But again, this is an  exceptional situation, which generically does not occur.


\section{Transparencies related to the quasilinear term}\label{transQL}
   
Here, we focus on the quasilinear term of \eqref{modueffective}, that is, we consider
\begin{equation}
  \label{modueffectiveQL}
 \part_t \cU = \MM_\tau \bigl \lbrack e^{- \tau \, \PG \cL \PG} \, \PG Q(e^{\tau \, \PG \cL \PG} \, \PG \cU,e^{\tau \, \PG \cL \PG} \, \PG \cU) \bigr \rbrack \,,
\end{equation}
where
\begin{equation}
  \label{defQ}
  Q(\cU,\tilde\cU)= - A(\cU,D_x)\tilde\cU\,, \quad A(\cU,D_x)\tilde\cU= \sum_{p=1}^3 A_p(\cU)\partial_{x_p} \tilde\cU\,,
\end{equation}  
and where, using \eqref{retainforbaras}, we have implemented
\begin{equation}
  \label{defA}  
  A_p(\cU)=\left( \begin{array}{cccccc}
\displaystyle \frac{u_e^p}{\sqrt{\underline{n} \, m_e}} & 0 & \displaystyle \frac{a_e'(0)}{\sqrt{\underline{n} \, m_e}} \, \varrho_e {}^te^p & 0 & 0 & 0 \\
   0 & \displaystyle \frac{u_i^p}{\sqrt{\underline{n}\, m_i}} & 0 & \displaystyle \frac{a_i'(0)}{\sqrt{\underline{n} \, m_i}} \, \varrho_i \, {}^te^p  & 0 & 0 \\
  \displaystyle  \frac{a_e'(0)}{\sqrt{\underline{n} \, m_e}} \, \varrho_e \,  e^p  & 0 & \displaystyle \frac{u_e^p}{\sqrt{\underline{n} \, m_e}} \, e^p\otimes e^p & 0 & 0 & 0 \\
   0 & \displaystyle  \frac{a_i'(0)}{\sqrt{\underline{n} \, m_i}} \, \varrho_i \, {}^te^p & 0 & \displaystyle \frac{u_i^p}{\sqrt{\underline{n} \, m_i}} \, e^p\otimes e^p  & 0 & 0 \\
    0 & 0 & 0 & 0 &  0 & 0\\
    0 & 0 & 0 & 0 &  0 & 0
  \end{array}  \right). 
\end{equation}
Observe that the matrices $ A_p $ are linear functions of the unknown $\cU:={} ^t(\varrho_e,\varrho_i,{}^tu_e,{}^tu_i,{}^tE,{}^tB)$. From \eqref{appliscatbis}-\eqref{defQ}-\eqref{defA}, we obtain
\[ \begin{array}{l}
 - Q(e^{\tau \, \PG \cL \PG} \, \PG \,  \cU,e^{\tau \, \PG \cL \PG} \, \PG \, \cU) \\ \qquad = \displaystyle \sum_{p=1}^3\ \sum_{k,m\,\in\ZZ^3} e^{{\rm i}(k+m)\cdot x} \sum_{\sigma',\sigma'' \in \pm}
  {\rm i} \, m_p \, A_p\big(\cI_{\sigma'}(\cY_{\sigma' k}(\tau) \, \cU_{\sigma'k})\big) \ \cI_{\sigma''}(\cY_{\sigma'' m}(\tau) \, \cU_{\sigma''m}) \\
\qquad \displaystyle = -\sum_{\sigma',\sigma''\in \pm }\  \sum_{n\in\ZZ^3} Q_{n,\sigma',\sigma''}(\tau)e^{{\rm i}n\cdot x}\,, 
\end{array} \]
where, exploiting \eqref{formcY}, we have isolated the Fourier coefficient
\begin{equation}
\label{defqnspspp}
\begin{array}{rl}
Q_{n,\sigma',\sigma''}(\tau)= \! \! \! & \displaystyle - \sum_{p=1}^3 \ \sum_{k+m=n}\  \sum_{\jmath',\jmath''=0}^3 \\
\ & {\rm i} \, m_p \, \tilde \delta_k^{\jmath'} \, \tilde \delta_m^{\jmath''} \, 
A_p\big(\cI_{\sigma'}(\tilde\Pi_{\sigma'k}^{\jmath'}\cU_{\sigma'k})\big) \ \cI_{\sigma''}(\tilde\Pi_{\sigma''m}^{\jmath''}\cU_{\sigma''m})\
e^{{\rm i} \tau (\sigma'\lambda_k^{\jmath'}+\sigma''\lambda_m^{\jmath''})}\,. 
\end{array}
\end{equation}
From \eqref{formcY}-\eqref{appliscatbis}, we then obtain
\[ \begin{array}{rl}
  \partial_t \cU_n = \! \! \! & \displaystyle  \MM_\tau \bigg \lbrack \sum_{\sigma, \sigma',\sigma'' \in \pm } \ \sum_{\jmath=0}^3 \tilde \delta_n^{\jmath} 
 \, \cI_{\sigma}(\tilde\Pi_{\sigma n}^{\jmath}Q_{\sigma n,\sigma',\sigma''}(\tau)) \ e^{-{\rm i} \tau \sigma\lambda_n^{\jmath}} \bigg \rbrack \\
  = \! \! \! & \displaystyle - \sum_{\sigma, \sigma',\sigma''\in \pm} \ \sum_{\jmath, \jmath', \jmath''=0}^3\  \sum_{k+m=\sigma n}\  \sum_{p=1}^3 \\
& \displaystyle {\rm i} \, \sigma \, m_p \,
  \tilde \delta_n^{\jmath} \, \tilde \delta_k^{\jmath'} \, \tilde \delta_m^{\jmath''} \ 
  \cI_{\sigma}\Big \lbrack \tilde\Pi_{\sigma n}^{\jmath} \big( A_p(\cI_{\sigma'}(\tilde\Pi_{\sigma' k}^{\jmath'} \cU_{\sigma' k})) \big) \,
  \cI_{\sigma''}(\tilde\Pi_{\sigma'' m}^{\jmath''} \cU_{\sigma'' m}) \Big \rbrack \ 
\MM_\tau \big[ e^{{\rm i} \sigma \tau (\sigma'\lambda_k^{\jmath'}+\sigma''\lambda_m^{\jmath''}- \lambda_n^{\jmath} )}\big] \,.
\end{array} \]  
In the previous expression the non vanishing contributions are defined by the same resonance set as the one defined in the introduction of Section \ref{sec:resonances}, that is $\cR_{\sigma,\sigma',\sigma''}^{\jmath, \jmath', \jmath''}(n)$. Therefore, we get
\begin{equation}
  \label{eqnQL1n}
\begin{array}{rl} \partial_t \cU_n = \! \! \! & \displaystyle - 
 \sum_{\sigma, \sigma',\sigma''\in \pm} \ \sum_{\jmath, \jmath', \jmath''=0}^3 \   \sum_{(k,m)\in \cR_{\sigma,\sigma',\sigma''}^{\jmath, \jmath', \jmath''}(n)}\  \sum_{p=1}^3 \\
\displaystyle & \quad \displaystyle \tilde \delta_n^{\jmath} \, \tilde \delta_k^{\jmath'} \, \tilde \delta_m^{\jmath''} \, {\rm i} \, \sigma \,  m_p \
 \cI_{\sigma}\Big \lbrack \tilde\Pi_{\sigma n}^{\jmath} \big( A_p(\cI_{\sigma'}(\tilde\Pi_{\sigma' k}^{\jmath'} \cU_{\sigma' k})) \big) \ 
 \cI_{\sigma''}(\tilde\Pi_{\sigma'' m}^{\jmath''} \cU_{\sigma'' m})\Big \rbrack \,. 
 \end{array}
\end{equation}  
Expanding the term in the bracket of \eqref{eqnQL1n}, we find
\begin{multline}
\cI_{\sigma}\Big \lbrack \tilde\Pi_{\sigma n}^{\jmath} \big( A_p(\cI_{\sigma'}(\tilde\Pi_{\sigma' k}^{\jmath'} \cU_{\sigma' k})) \big)
\cI_{\sigma''}(\tilde\Pi_{\sigma'' m}^{\jmath''} \cU_{\sigma'' m})\Big \rbrack 
= \sum_{l=1}^{\tilde M_n^\jmath} \sum_{l'=1}^{\tilde M_k^{\jmath'}}\sum_{l''=1}^{\tilde M_m^{\jmath''}}  \cI_{\sigma}(r_{\sigma n}^{\jmath,l})  \\
\cI_{\sigma\sigma'}(\langle r_{\sigma' k}^{\jmath',l'},\cU_{\sigma' k}\rangle) \ 
\cI_{\sigma\sigma''}(\langle r_{\sigma'' m}^{\jmath'',l''},\cU_{\sigma''m}\rangle) \ 
\big\langle \cI_{\sigma}(r_{\sigma n}^{\jmath,l}),A_p\big(\cI_{\sigma\sigma'}(r_{\sigma' k}^{\jmath',l'})\big) \,  \cI_{\sigma\sigma''}(r_{\sigma'' m}^{\jmath'',l''})
\big\rangle\,.
\end{multline}
Substituting this expression into \eqref{eqnQL1n}, we finally obtain
\begin{multline}
  \label{eqnQL2n}
  \partial_t \cU_n = - \sum_{\sigma, \sigma',\sigma''\in \pm} \ \sum_{\jmath, \jmath', \jmath''=0}^3\
  \sum_{\underset{\cR_{\sigma,\sigma',\sigma''}^{\jmath, \jmath', \jmath''}(n)}{(k,m)\in}}\  \sum_{p=1}^3 \
  \sum_{l=1}^{\tilde M_n^\jmath} \sum_{l'=1}^{\tilde M_k^{\jmath'}}\sum_{l''=1}^{\tilde M_m^{\jmath''}} 
   \tilde \delta_n^{\jmath} \, \tilde \delta_k^{\jmath'} \, \tilde \delta_m^{\jmath''} \ {\rm i} \, \sigma \,  m_p \
   \cI_{\sigma}(r_{\sigma n}^{\jmath,l}) \\
   \cI_{\sigma\sigma'}(\langle r_{\sigma' k}^{\jmath',l'},\cU_{\sigma' k}\rangle)
  \ \cI_{\sigma\sigma''}(\langle r_{\sigma'' m}^{\jmath'',l''},\cU_{\sigma''m}\rangle)
  \ \big\langle \cI_{\sigma}(r_{\sigma n}^{\jmath,l}),A_p\big(\cI_{\sigma\sigma'}(r_{\sigma' k}^{\jmath',l'})\big) \, \cI_{\sigma\sigma''}(r_{\sigma'' m}^{\jmath'',l''})
   \big\rangle\,.
\end{multline}
For $n=0$, \eqref{eqnQL2n} becomes, with $ R_{\sigma',\sigma''}^{\jmath, \jmath', \jmath''}=\cR_{\sigma,\sigma',\sigma''}^{\jmath, \jmath', \jmath''}(0)$, 
\begin{align}
  \label{eqnQL20}   
  \partial_t \cU_0  = & \sum_{\sigma, \sigma',\sigma''\in \pm} \ \sum_{\jmath, \jmath', \jmath''=0}^3\
  \sum_{k \in 
  R_{\sigma',\sigma''}^{\jmath, \jmath', \jmath''}} \sum_{p=1}^3 \
  \sum_{l=1}^{\tilde M_0^\jmath} \sum_{l'=1}^{\tilde M_k^{\jmath'}}\sum_{l''=1}^{\tilde M_k^{\jmath''}} 
   \tilde \delta_0^{\jmath} \, \tilde \delta_k^{\jmath'} \, \tilde \delta_k^{\jmath''} \ {\rm i} \, \sigma \, k_p \ \cI_{\sigma}(r_{0}^{\jmath,l})  \\
   &\  \cI_{\sigma\sigma'}(\langle r_{\sigma' k}^{\jmath',l'},\cU_{\sigma' k}\rangle) \ 
     \cI_{\sigma\sigma''}(\langle r_{-\sigma'' k}^{\jmath'',l''},\cU_{-\sigma''k}\rangle) \ 
   \big\langle \cI_{\sigma}(r_{0}^{\jmath,l}),A_p\big(\cI_{\sigma\sigma'}(r_{\sigma' k}^{\jmath',l'})\big) \,  \cI_{\sigma\sigma''}(r_{-\sigma'' k}^{\jmath'',l''})
   \big\rangle \nonumber  \\
    = & \ \cQ_{1} \ +\  \cQ_{2} \ +\  \cQ_{3} \ +\ \cQ_4  \,, \nonumber
\end{align}
where the subscript $ a $ inside $ \cQ_a $ means that ``Case $ a $'' of Subsection \ref{sec:resonances}.$a $ is activated. Below, we discuss separately the contents of the $\cQ_a $.\\ 

Before going further we observe the following simple property that we will use repeatedly.

\begin{lemma}[Commutation properties associated with the velocity components of $ A_p(\cU) \, \tilde \cU  $] \label{lemQCPA}
  Given $ \cU:={} ^t(\varrho_e,\varrho_i,{}^tu_e,{}^tu_i,E,B)$ and
  $ \tilde\cU:={} ^t(\tilde \varrho_e,\tilde \varrho_i,{}^t\tilde u_e, {}^t \tilde u_i,\tilde E,\tilde B)$, we have
\begin{equation}
  \label{Aquasicommut}
  A_p(\cU) \, \tilde \cU =  \left(\begin{array}{c}
\displaystyle \frac{1}{\sqrt{\underline{n} \, m_e}} \, \bigl(\tilde \varrho_e \, u_e^p + {a}_e'(0)\, \varrho_e \, \tilde u_e^p \bigr) \smallskip \\
\displaystyle  \frac{1}{\sqrt{\underline{n} \, m_i}}  \,  \bigl(\tilde \varrho_i \, u_i^p + {a}_i'(0)\, \varrho_i \, \tilde u_i^p \bigr) \smallskip \\
\displaystyle  \frac{1}{\sqrt{\underline{n} \, m_e}} \, \bigl( a_e'(0) \, \varrho_e \, \tilde\varrho_e  + u_e^p \, \tilde u_e^p  \bigr) \ e^p \smallskip \\
\displaystyle \frac{1}{\sqrt{\underline{n} \, m_i}}  \,    \bigl( a_i'(0) \, \varrho_i \,  \tilde \varrho_i  + u_i^p \, \tilde u_i^p  \bigr) \  e^p \\
    0\\
    0
    \end{array}
  \right) \,.
\end{equation}
The expression $ A_p(\cU) \, \tilde \cU $ does not depend on $ (E,B) $ and $ (\tilde E,\tilde B) $. The third and fourth  components of the vectors $ A_p(\cU) \, \tilde \cU $ and $ A_p(\tilde \cU) \, \cU $ are the same. Moreover, if the two density components of both $ \cU $ and $ \tilde \cU $ are zero, then $ A_p(\cU) \, \tilde \cU = A_p(\tilde \cU) \, \cU  $.
\end{lemma}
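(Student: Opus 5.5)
The statement is a direct computation from the explicit form \eqref{defA} of $A_p(\cU)$, so the plan is to multiply the block matrix $A_p(\cU)$ by the column $\tilde\cU$ row-block by row-block and then read off the three structural claims.

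First, the scalar rows. The first row of $A_p(\cU)$ has entry $u_e^p/\sqrt{\underline n\, m_e}$ in the $\varrho_e$ slot and $\bigl(a_e'(0)/\sqrt{\underline n\, m_e}\bigr)\,\varrho_e\,{}^te^p$ in the $u_e$ slot. Multiplying by $\tilde\cU$ gives $\bigl(u_e^p\tilde\varrho_e + a_e'(0)\varrho_e\,{}^te^p\tilde u_e\bigr)/\sqrt{\underline n\, m_e}$. Since ${}^te^p\tilde u_e=\tilde u_e^p$, this is the first line of \eqref{Aquasicommut}. The ion row is the same computation with $s=i$.

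Next, the velocity blocks. The third block row contributes $\bigl(a_e'(0)/\sqrt{\underline n\, m_e}\bigr)\,\varrho_e\tilde\varrho_e\, e^p$ plus $\bigl(u_e^p/\sqrt{\underline n\, m_e}\bigr)(e^p\otimes e^p)\tilde u_e$. Using $(e^p\otimes e^p)\tilde u_e = \tilde u_e^p\, e^p$, this is the third line of \eqref{Aquasicommut}. In the fourth block row, the entry written in \eqref{defA} as $\varrho_i\,{}^te^p$ must be read as the column $\varrho_i\, e^p$, exactly as in \eqref{expresscAj}; with that reading the computation is identical. The last two block rows of $A_p$ vanish, which gives the zero $E$- and $B$-components.

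The three assertions then follow by inspection of \eqref{Aquasicommut}:
\begin{itemize}
\item The $E,B$ columns of $A_p$ are zero and $A_p(\cU)$ itself does not involve $(E,B)$, so the product is independent of $(E,B)$ and $(\tilde E,\tilde B)$.
\item The velocity components $\bigl(a_s'(0)\varrho_s\tilde\varrho_s + u_s^p\tilde u_s^p\bigr)e^p/\sqrt{\underline n\, m_s}$ are symmetric under $\cU\leftrightarrow\tilde\cU$. Hence the third and fourth components of $A_p(\cU)\tilde\cU$ and $A_p(\tilde\cU)\cU$ agree.
\item If $\varrho_s=\tilde\varrho_s=0$, the density components of both products vanish, and the velocity components coincide by the previous point. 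Therefore $A_p(\cU)\tilde\cU = A_p(\tilde\cU)\cU$.
\end{itemize}

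There is no real obstacle here. The only point needing care is the transpose typo in the $(4,2)$ block of \eqref{defA}, which should be corrected by reference to \eqref{expresscAj}.
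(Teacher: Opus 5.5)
Your proposal is correct and matches the paper. The paper states this lemma as a ``simple property'' with no written proof, and it amounts to the same row-by-row multiplication of \eqref{defA} against $\tilde\cU$. That multiplication uses ${}^te^p\tilde u_s=\tilde u_s^p$ and $(e^p\otimes e^p)\tilde u_s=\tilde u_s^p\,e^p$, and the three claims are then read off from the symmetry of the velocity components. Your correction of the $(4,2)$ block of \eqref{defA} to the column $\varrho_i\,e^p$, consistent with \eqref{expresscAj}, is also right.
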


\subsection{Case 1}\label{apportcase1Q}
The part $ \cQ_1$ is associated with the resonances set $R_{\sigma',\sigma''}^{0, 0, 0}$; it corresponds to the time evolution of the spatial mean value of the solutions to extended magnetohydrodynamics. It is thereby given by
\begin{align}
  \cQ_1 =  &\ \frac{1}{8} \sum_{\sigma, \sigma',\sigma''\in \pm} \ \sum_{k\in\ZZ^3} \ 
  \sum_{p=1}^3 \  \sum_{l=1}^{\tilde M_0^0} \sum_{l'=1}^{\tilde M_k^{0}}\sum_{l''=1}^{\tilde M_k^{0}} 
  {\rm i} \, \sigma \, k_p \ r_{0}^{0,l}  \label{eqn0Q1}\\
     &\  \cI_{\sigma\sigma'}(\langle r_{\sigma' k}^{0,l'},\cU_{\sigma' k}\rangle)
   \  \cI_{\sigma\sigma''}(\langle r_{-\sigma'' k}^{0,l''},\cU_{-\sigma''k}\rangle)
  \ \big\langle r_{0}^{0,l},A_p\big(\cI_{\sigma\sigma'}(r_{\sigma' k}^{0,l'})\big) \,  \cI_{\sigma\sigma''}(r_{-\sigma'' k}^{0,l''})
   \big\rangle\,. \nonumber
\end{align}

\begin{lemma}[Transparency of the part of the quasilinear term  coming from the slow limit model]\label{transpa0Q}
  We have $\cQ_1 = 0 $.
\end{lemma}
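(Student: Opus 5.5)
The idea is to undo the Fourier bookkeeping in \eqref{eqn0Q1} and read $\cQ_1$ as the spatial mean of a quasilinear expression built from the slow part of $\cU$ alone. For $(\jmath,\jmath',\jmath'')=(0,0,0)$ no time oscillation is involved: we have $\lambda^0_k=0$ and $R^{0,0,0}_{\sigma',\sigma''}=\ZZ^3$, so $\MM_\tau$ acts trivially. Define the real slow field
\[ \cV(x) := \sum_{k\in\ZZ^3} \tilde\Pi^0_k\,\cU_k\, e^{{\rm i}k\cdot x}. \]
This field is real valued by the relations $\cI_-(\tilde\Pi^0_{-k}\cU_{-k})=\tilde\Pi^0_k\cU_k$ established in Subsection~\ref{apportcase1}. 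Using \eqref{appliscatbis} restricted to $\jmath=0$, I would check that the weights $\tilde\delta^0_\cdot=1/2$, the sums over $\sigma,\sigma',\sigma''$ and the conjugations $\cI_{\sigma\sigma'},\cI_{\sigma\sigma''}$ recombine exactly. The expected identity is
\[ \cQ_1 = -\,\tilde\Pi^0_0 \Big\langle \sum_{p=1}^3 A_p(\cV)\,\partial_{x_p}\cV \Big\rangle , \]
where $\langle\cdot\rangle$ is the spatial mean \eqref{spamean}, i.e.\ the zero Fourier mode. Concretely, the factor ${\rm i}\,\sigma\,k_p$ is the symbol of $\partial_{x_p}$ acting on the frequency $-\sigma''k$ factor, and summing over $\sigma$ with $\cI_\sigma$ produces the real part. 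The constant in front ($1/8$ versus $1$) is irrelevant, since we only need to show vanishing.

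Next I would use the structure of the slow eigenvectors. For $k=0$ the factor $k_p$ kills the contribution, so only $k\neq0$ matters. There, by \eqref{cK0knot0}, the vectors $r^{0,l}_k$ for $l\le 4$ have vanishing density components, and their velocity components are transverse to $k$. Hence $\cV$ differs from its mean by a field with $\varrho_e=\varrho_i=0$; moreover the mean part $\tilde\Pi^0_0\cU_0$ does not contribute, because it is differentiated against nothing. By Lemma~\ref{lemQCPA}, with zero densities the density components of $A_p(\cV)\partial_p\cV$ vanish and the velocity components reduce to
\[ \frac{1}{\sqrt{\underline n\,m_s}}\, v_s^p\,\partial_{x_p} v_s^p\; e^p = \frac{1}{2\sqrt{\underline n\,m_s}}\,\partial_{x_p}\big((v_s^p)^2\big)\, e^p , \qquad s\in\{e,i\}, \]
with no summation inside the component. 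Each of these is an exact derivative of a periodic function, so its spatial mean is zero, and hence $\cQ_1=0$ after applying $\tilde\Pi^0_0$. Equivalently, on the Fourier side I would pair $k$ with $-k$: by the symmetry $A_p(\cU)\tilde\cU=A_p(\tilde\cU)\cU$ from Lemma~\ref{lemQCPA} (valid for zero densities), the two terms coincide while $k_p$ changes sign, so they cancel.

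The main obstacle is the first step: verifying carefully that the combination of $\sigma,\sigma',\sigma''$, the conjugations $\cI$, and the sign conventions $r^{0,l}_{-k}=\pm\bar r^{0,l}_k$ from \eqref{cK0knot0}--\eqref{cK0knot0bis} really assembles into the zero mode of $A(\cV,D_x)\cV$ with a real $\cV$. I would do this by writing $\cV=\sum_k(\cdots)$ directly, expanding $\langle A_p(\cV)\partial_p\cV\rangle=\sum_k A_p(\cV_k)({\rm i}(-k_p))\cV_{-k}$, and matching term by term with \eqref{eqn0Q1}. If the matching proves too messy, I would fall back on the Fourier pairing argument with $k\leftrightarrow-k$, which only needs the symmetry of Lemma~\ref{lemQCPA} and the oddness of $k_p$.
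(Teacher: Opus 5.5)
Your proposal is correct and is essentially the paper's argument. The paper also collapses the four $(\sigma',\sigma'')$ terms of \eqref{eqn0Q1} into one using \eqref{cK0knot0}, which is exactly the relation $\cI_-(\tilde\Pi^0_{-k}\cU_{-k})=\tilde\Pi^0_k\cU_k$ that makes your identification of $\cQ_1$ with the mean of $-\tilde\Pi^0_0\sum_p A_p(\cV)\partial_{x_p}\cV$ go through. It then symmetrizes under $k\to-k$, $l'\leftrightarrow l''$, so that Lemma~\ref{lemQCPA} on zero-density vectors kills $A_p(r^{0,l'}_k)r^{0,l''}_{-k}-A_p(r^{0,l''}_{-k})r^{0,l'}_k$; your exact-derivative identity $v_s^p\,\partial_{x_p}v_s^p=\tfrac12\partial_{x_p}(v_s^p)^2$ is the physical-space form of that same pairing, and your fallback is literally the paper's proof.
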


\begin{proof}
  From \eqref{eqn0Q1}, the contribution of the case $k=0$ is trivially zero. Now, we consider $k\in \ZZ_*^3=\ZZ^3\! \setminus \!\{0\}$ in \eqref{eqn0Q1}. Expanding \eqref{eqn0Q1}, for $\sigma'$ and $\sigma''$ in the set $\{-,\,+\}$, we obtain
\begin{align*}
  \cQ_1 =  &\ \frac{1}{8} \, \sum_{\sigma\in \pm} \ \sum_{k\in\ZZ^3_*} \ 
  \sum_{p=1}^3 \  \sum_{l=1}^{\tilde M_0^0} \sum_{l'=1}^{\tilde M_k^{0}}\sum_{l''=1}^{\tilde M_k^{0}} 
  {\rm i} \, \sigma \, k_p \ r_{0}^{0,l} \\
     &\  \cI_{\sigma} \big( \langle r_{ k}^{0,l'},\cU_{ k}\rangle \, \langle r_{- k}^{0,l''},\cU_{-k}\rangle \,
  \langle r_{0}^{0,l},A_p(r_{k}^{0,l'}) r_{- k}^{0,l''}\rangle +
  \langle r_{k}^{0,l'},\cU_{k}\rangle \, \langle \bar r_{k}^{0,l''},\bar \cU_{k}\rangle \, 
  \langle r_{0}^{0,l},A_p(r_{k}^{0,l'}) \bar r_{k}^{0,l''}\rangle  \\
  & \ + \, \langle \bar r_{ -k}^{0,l'},\bar \cU_{ -k}\rangle \, \langle r_{- k}^{0,l''},\cU_{-k}\rangle \, 
  \langle r_{0}^{0,l},A_p(\bar r_{-k}^{0,l'}) r_{-k}^{0,l''}\rangle +
  \langle \bar r^{0,l'}_{-k},\bar \cU_{-k}\rangle \, \langle \bar r_{k}^{0,l''},\bar \cU_{k}\rangle \, 
  \langle r_{0}^{0,l},A_p(\bar r^{0,l'}_{-k}) \bar r_{k}^{0,l''}\rangle 
  \big)\,. \nonumber
\end{align*}
Using properties \eqref{cK0knot0} for $\{r_k^{0,l}\}_{l=1,\ldots,4}$, we obtain
\begin{equation*}
  \cQ_1 =   \frac{1}{2} \, \sum_{\sigma\in \pm} \ \sum_{k\in\ZZ^3_*} \ 
  \sum_{p=1}^3 \  \sum_{l=1}^{\tilde M_0^0} \sum_{l'=1}^{\tilde M_k^{0}}\sum_{l''=1}^{\tilde M_k^{0}} 
  {\rm i} \, \sigma \, k_p \ r_{0}^{0,l} \, 
     \\ \cI_{\sigma} \big(\langle r_{ k}^{0,l'},\cU_{ k}\rangle \, \langle r_{- k}^{0,l''},\cU_{-k}\rangle \,
  \langle r_{0}^{0,l},A_p(r_{k}^{0,l'}) r_{- k}^{0,l''}\rangle\, \big).
\end{equation*}
By doubling this multiple sum (divided by two), in the second replica  we can swap the (mute) indices $l'$ and $l''$ (because they run through the same set $\tilde M_k^{0}$) and we can change $k $ into $ -k$ (which means to replace $ k_p $ by $ - k_p $) in order to obtain
\begin{align*}
  \cQ_1 =  &\ \frac{1}{4} \, \sum_{\sigma\in \pm} \ \sum_{k\in\ZZ^3_*} \ 
  \sum_{p=1}^3 \  \sum_{l=1}^{\tilde M_0^0} \sum_{l'=1}^{\tilde M_k^{0}}\sum_{l''=1}^{\tilde M_k^{0}} 
  {\rm i} \, \sigma \, k_p \ r_{0}^{0,l} \,   \\
     &\ \cI_{\sigma} \big(\langle r_{ k}^{0,l'},\cU_{ k}\rangle \, \langle r_{- k}^{0,l''},\cU_{-k}\rangle \,
  \langle r_{0}^{0,l},A_p(r_{k}^{0,l'}) r_{- k}^{0,l''} - A_p(r_{-k}^{0,l''}) r_{k}^{0,l'}\rangle\big)\,.\nonumber
\end{align*}
Remark that the first two (density) components of all  vectors $\{r_k^{0,l}\}_{l=1,\ldots,4}$ are zero. Using Lemma~\ref{lemQCPA}, it follows that $\cQ_1=0$.
\end{proof}

\subsection{Case 2}\label{apportcase2Q} The part $ \cQ_2 $ collects all (non-prepared) resonances implying the frequency $ k = 0 $. In view of formula \eqref{eqnQL20}, where every term of the sum is multiplied by $k_p$ for some $p$ in the set $\{1,2,3\}$, the corresponding contribution is zero and we have the following lemma.

\begin{lemma}[Transparency of the part of the quasilinear term coming from resonances implying the frequency $k=0$]\label{transpa2Q}
 We have $\cQ_2 = 0 $.
\end{lemma}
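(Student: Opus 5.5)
The plan is to read off $\cQ_2$ directly from formula \eqref{eqnQL20}. By definition, $\cQ_2$ collects the terms of the multiple sum for which the resonance of Case~2 (Paragraph \ref{jpour000k}) is activated. For these terms the summation frequency is $k=0$ and $(\jmath,\jmath',\jmath'')\neq(0,0,0)$. The structural fact to exploit is that every term of \eqref{eqnQL20} carries the scalar factor ${\rm i}\,\sigma\,k_p$. This factor comes from the spatial derivative $\partial_{x_p}$ acting on the second entry $\cI_{\sigma''}(\tilde\Pi^{\jmath''}_{-\sigma''k}\cU_{-\sigma''k})$, whose frequency is $m=-k$ once $n=0$ is fixed.

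First, I would check that the factor ${\rm i}\, m_p$ in \eqref{defqnspspp} indeed becomes $\pm{\rm i}\,k_p$ when $n=0$ and $m=-k$, up to the sign bookkeeping with $\sigma$ already recorded in \eqref{eqnQL20}. Second, I would verify that every remaining factor in each Case~2 term is finite and well defined at $k=0$. These factors are the projectors $\tilde\Pi^{\jmath}_0$, the eigenvectors $r^{\jmath,l}_0$ from \eqref{cestr010} and \eqref{cestlasuite} (extended by the convention \eqref{conventiontildebis}), the weights $\tilde\delta^\jmath_0\in\{1/2,1/3\}$, the coefficients $\langle r^{\jmath',l'}_0,\cU_0\rangle$, and the matrices $A_p(\cdot)$ from \eqref{defA}, which are linear in their argument. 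Since all of these are finite, each term equals a finite quantity multiplied by $k_p=0$, and so it vanishes. Third, I would note that the Case~2 sum is finite: it runs over finitely many indices $\sigma,\sigma',\sigma'',\jmath,\jmath',\jmath'',l,l',l'',p$ and over the single frequency $k=0$. Hence $\cQ_2=0$.

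The only point requiring care is the second step, namely that no factor hidden in the $k=0$ terms blows up, for instance a $|k|^{-1}$ as in the definition of $g^1_k$ or of the $\mathbbm{e}^j_k$. At $k=0$, however, the paper uses the separately defined basis of Subsection \ref{subsubsec:compze}, which contains no such singular normalization. The vanishing is therefore genuine, and no limiting argument $k\to0$ is needed.

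Physically, the result reflects the fact that a derivative acting on the zero mode produces nothing. This contrasts with the semilinear counterpart $\cF_2$, which was nonzero.
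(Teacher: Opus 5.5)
Your proposal is correct and follows the paper's own argument: every term of \eqref{eqnQL20} carries the factor ${\rm i}\,\sigma\,k_p$, and this factor vanishes at the Case~2 frequency $k=0$, so $\cQ_2=0$. Your additional check that the remaining factors are finite at $k=0$ (using the separately defined basis of Subsection~\ref{subsubsec:compze}) is a harmless refinement that the paper leaves implicit.
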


\subsection{Case 3}\label{apportcase3Q}
The part  $ \cQ_3$ is associated with the set of resonances $R_{\sigma',\sigma''}^{0, \jmath', \jmath''}$, which involves an infinite number of frequencies $k\in \ZZ_*^3$.

\begin{lemma}[Transparency of the part of the quasilinear term involving unbounded frequencies]\label{transpa3Q}
  We have $\cQ_3=0$.
\end{lemma}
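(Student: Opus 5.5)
The plan is to exploit the same two mechanisms as in the proof of Lemma~\ref{transpa0Q}: the factor ${\rm i}\,\sigma\,k_p$ in \eqref{eqnQL20}, and the symmetry in Lemma~\ref{lemQCPA}. The expected outcome is that, after regrouping, every contribution to $\cQ_3$ has the form ${\rm i}\,\sigma$ times a real quantity. Such terms cancel when summed over $\sigma\in\{-,+\}$.

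The first step is to restrict \eqref{eqnQL20} to Case~3. By \eqref{eq:const0} and \eqref{eq:const2} of Paragraph~\ref{jpour0}, the only surviving indices are $\jmath=0$, $\jmath'=\jmath''\neq 0$, $\sigma'\sigma''=-$ and $k\in\ZZ^3\setminus\{0\}$. Here the outer projector is $\tilde\Pi^0_0$, with $r^{0,l}_0$ real for $l\le 7$ by \eqref{cestr010}. For $(\sigma',\sigma'')=(+,-)$ the amplitudes are $\langle r^{\jmath',l'}_k,\cU_k\rangle$ and $\cI_-\langle r^{\jmath',l''}_{k},\cU_{k}\rangle$ with the same $k$. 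The pair $(-,+)$ reduces to the same form after $k\mapsto -k$, using \eqref{impaexp}, i.e.\ $J\bar r^{\jmath,l}_k=\pm r^{\jmath,l}_{-k}$. Each remaining term is then of the shape ${\rm i}\sigma\,\cI_\sigma\bigl(\gamma_{l'}\bar\gamma_{l''}\,\langle r^{0,l}_0,\sum_p k_pA_p(r^{l'})\bar r^{l''}\rangle\bigr)$, where $\gamma_{l'}:=\langle r^{\jmath',l'}_k,\cU_k\rangle$ and $\gamma_{l''}$ is defined likewise.

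Next I will treat the transverse and longitudinal families separately.

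\emph{Transverse modes.} The vectors $r^{1,2}_{\perp k}$ in \eqref{cK0kperp1} and \eqref{cK0kperp2} have zero density components. By Lemma~\ref{lemQCPA}, $A_p(r^{l'})\bar r^{l''}=A_p(\bar r^{l''})r^{l'}$. Swapping $l'\leftrightarrow l''$ in the double sum then makes the matrix $\bigl(\langle r^{0,l}_0,\sum_pk_pA_p(r^{l'})\bar r^{l''}\rangle\bigr)_{l',l''}$ Hermitian. Since the velocity components of $r_{\perp k}$ are real multiples of $\mathbbm{e}^2,\mathbbm{e}^3$, and $\sum_p k_p A_p$ only produces $\mathbbm{e}^1$-directed products $u^1\tilde u^1$, I expect the entries even to vanish. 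In either case the quadratic form $\sum\gamma_{l'}\bar\gamma_{l''}(\cdots)$ is real.

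\emph{Longitudinal modes.} Each $r^\star_{\mypar k}$ with $\star\in\{l,r\}$ spans a one-dimensional eigenspace, so the amplitude factor reduces to $|\gamma|^2$. By \eqref{rparallellk}, the density and velocity components of $r^\star_{\mypar k}$ are real; only $E$ is imaginary, and $E$ does not enter $A_p$ by Lemma~\ref{lemQCPA}. Hence $\sum_p k_pA_p(r)\bar r=|k|\,A_{\mathbbm{e}^1}(r)r$ is real, and its projection onto the real vectors $r^{0,l}_0$ is real as well.

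In both cases the bracket is ${\rm i}\sigma$ times a real number, and $\sum_\sigma \cI_\sigma({\rm i}\sigma x)=2{\rm i}x-2{\rm i}x\cdot(-1)\cdot(-1)=0$ for real $x$ (i.e.\ ${\rm i}x+\overline{-{\rm i}x}=0$), giving $\cQ_3=0$.

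The main obstacle is bookkeeping in the first step: correctly tracking the conjugations $\cI_{\sigma\sigma'}$ and $\cI_{\sigma\sigma''}$ and the sign conventions $r_{-k}=\pm J\bar r_k$, so that the two sign pairs really combine into a single Hermitian form with the stated reality. If reality fails for the longitudinal density components, which are not symmetric in Lemma~\ref{lemQCPA} unless $a'_s(0)=1$, the fallback is to use only the one-dimensionality of the eigenspace: $A(r)\bar r$ with $r$ having real fluid components stays real, and this is what makes the argument work.
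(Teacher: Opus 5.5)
Your proof is correct up to a slip in the last line, and it uses the same ingredients as the paper: the swap $l'\leftrightarrow l''$, the symmetry $A_p(\cU)\tilde\cU=A_p(\tilde\cU)\cU$ from Lemma~\ref{lemQCPA} for the density-free vectors $r^{1,2}_{\perp k}$, and, for $r^\star_{\mypar k}$, one-dimensionality together with real fluid components. Both arguments show that the matrix $M_{l'l''}=\langle r^{0,l}_0,\sum_pk_pA_p(r^{l'}_k)\bar r^{l''}_k\rangle$ is Hermitian. The paper's conclusion $\updelta^l=\langle r^{0,l}_0,w\rangle=0$ is exactly the statement $M_{l'l''}=\overline{M_{l''l'}}$.

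The only difference is which conjugate pair you cancel. The paper fixes $\sigma$ and pairs the $(\sigma',\sigma'')=(+,-)$ family with the $(-,+)$ family via $k\mapsto-k$ and $l'\leftrightarrow l''$. If the first family contributes $X$ inside $\cI_\sigma$, the second contributes $-\overline{X}$. The bracket is therefore $X-\overline{X}$, and it vanishes term by term for each $\sigma$, matching the treatment of $\cQ_1$ and $\cQ_4$. You keep each family separate and cancel $\sigma=+$ against $\sigma=-$, which requires $X\in\RR$. That is the same condition, so the two routes are equivalent. Yours has the small advantage of showing explicitly that $\cQ_3$ is a real multiple of $\Im X$, which identifies Hermitian symmetry as the exact criterion for transparency.

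Three points in your write-up need fixing.
\begin{itemize}
\item Your final identity is false as written: $\sum_\sigma\cI_\sigma({\rm i}\sigma x)=2{\rm i}x$ for real $x$. The cancellation you need is $\sum_\sigma{\rm i}\sigma\,\cI_\sigma(x)={\rm i}x-{\rm i}\bar x=0$. It holds because in \eqref{eqnQL20} the factor ${\rm i}\sigma k_p$ stands outside $\cI_\sigma$, as in your own ``shape'' formula.
\item Invoking \eqref{impaexp} for the $(-,+)$ family is unnecessary. A plain reindexing $k\mapsto-k$ suffices, and in fact each family is already real without any reindexing. Substituting $r_{-k}=\pm J\bar r_k$ would produce amplitudes along $Jr_k$ rather than your $\gamma_l$.
\item Your expectation that the transverse entries vanish is false for the paper's $A_p$. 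Because of the factor $e^p\otimes e^p$, the velocity part of $\sum_pk_pA_p(\cU)\tilde\cU$ is proportional to $\sum_pk_pu^p\tilde u^p\,e^p$, which need not vanish when $u,\tilde u\perp k$. You only rely on Hermitian symmetry, so the argument survives.
\end{itemize}
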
  

\begin{proof}
The set of resonances $R_{\sigma',\sigma''}^{0, \jmath', \jmath''}$ is the set \eqref{eq:const2} whose cardinality is countably infinite. Using the definition of the set \eqref{eq:const2} and equation \eqref{eqnQL20}, we have
\begin{align*}
  \cQ_{3} =  &\  \frac{1}{2} \, \sum_{\sigma, \sigma'\in \pm} \ \sum_{\jmath'=1}^3\
  \sum_{k\in \ZZ_*^3} \ \sum_{p=1}^3 \
  \sum_{l=1}^{\tilde M_0^0} \sum_{l'=1}^{\tilde M_k^{\jmath'}}\sum_{l''=1}^{\tilde M_k^{\jmath'}} 
    (\tilde \delta_k^{\jmath'})^2 \ {\rm i} \, \sigma \, k_p \ r_{0}^{0,l} \nonumber \\
   &\ \cI_{\sigma\sigma'}(\langle r_{\sigma' k}^{\jmath',l'},\cU_{\sigma' k}\rangle) \, 
     \cI_{-\sigma\sigma'}(\langle r_{\sigma' k}^{\jmath',l''},\cU_{\sigma'k}\rangle) \, 
   \big\langle r_{0}^{0,l},A_p\big(\cI_{\sigma\sigma'}(r_{\sigma' k}^{\jmath',l'})\big) \, \cI_{-\sigma\sigma'}(r_{\sigma' k}^{\jmath',l''})
   \big\rangle \nonumber  \\
   = &\  \frac{1}{2} \, \sum_{\sigma \in \pm} \ \sum_{\jmath'=1}^3\ \sum_{k\in \ZZ_*^3} \ \sum_{p=1}^3 \
   \sum_{l=1}^{\tilde M_0^0} \sum_{l'=1}^{\tilde M_k^{\jmath'}}\sum_{l''=1}^{\tilde M_k^{\jmath'}} 
    (\tilde \delta_k^{\jmath'})^2 \ {\rm i} \, \sigma \, k_p \ r_{0}^{0,l}\, \nonumber \\
   &\ \cI_{\sigma} \big (\langle r_{k}^{\jmath',l'},\cU_{ k}\rangle \, \langle \bar{r}_{k}^{\jmath',l''},\cU_{-k}\rangle \, 
   \langle r_{0}^{0,l}, A_p(r_{k} ^{\jmath',l'}) \, \bar{r}_{k}^{\jmath',l''} \rangle \\
&  \  + \langle \bar{r}_{-k}^{\jmath',l'},\cU_{ k}\rangle \, \langle {r}_{-k}^{\jmath',l''},\cU_{-k}\rangle \, 
   \langle r_{0}^{0,l}, A_p(\bar{r}_{-k}^{\jmath',l'}) \, {r}_{-k}^{\jmath',l''} \rangle \big)\,. \nonumber
\end{align*}
In the second member of this multiple sum,  we can swap the indices $l'$ and $l''$ (because they run through the same set $\tilde M_k^{\jmath'}$) and we can change $k $ into $ -k$ so as to obtain
\begin{align*}
  \cQ_{3} =  &\ \frac{1}{2} \, \sum_{\sigma \in \pm} \ \sum_{\jmath=1}^3\ \sum_{k\in \ZZ_*^3} \ \sum_{p=1}^3 \
   \sum_{l=1}^{\tilde M_0^0} \sum_{l'=1}^{\tilde M_k^{\jmath}}\sum_{l''=1}^{\tilde M_k^{\jmath}} 
    (\tilde \delta_k^{\jmath})^2 \ {\rm i} \, \sigma \, k_p \ r_{0}^{0,l}\,\nonumber \\
   & 
    \ \cI_{\sigma} \big ( \langle r_{k}^{\jmath,l'},\cU_{ k}\rangle \, \langle \bar{r}_{k}^{\jmath,l''},\cU_{-k}\rangle \, 
\big \langle r_0^{0,l},
A_p({r}_{k}^{\jmath,l'}) \, \bar{r}_{k}^{\jmath,l''} - A_p(\bar{r}_{k}^{\jmath,l''}) \, {r}_{k}^{\jmath,l'} \rangle \big)\,. \nonumber
\end{align*}
Let us define
\begin{equation}
  \updelta^l:= \langle r_{0}^{0,l}, w \rangle \, , \qquad w:= A_p({r}_{k}^{\jmath,l'}) \,  \bar{r}_{k}^{\jmath,l''}  - A_p(\bar{r}_{k}^{\jmath,l''}) \, {r}_{k}^{\jmath,l'} \,, \qquad l\in\{1,\ldots, \tilde M_0^0\}\,.
\end{equation}
Since the first eight components of $ {r}_{k}^{\jmath,l'} $ and  $ {r}_{k}^{\jmath,l''} $ are purely real and because \eqref{Aquasicommut} does not involve the electric and magnetic components of these vectors, we obtain 
\[ w:= A_p({r}_{k}^{\jmath,l'}) \,  {r}_{k}^{\jmath,l''}  - A_p( {r}_{k}^{\jmath,l''}) \, {r}_{k}^{\jmath,l'} \, . \]
Since the first two components of the vectors $r_{0}^{0,l}$, for $l\in\{2,\ldots, \tilde M_0^0\}$, are zero, then $\updelta^l=0$ for $l\in\{2,\ldots, \tilde M_0^0\}$. It remains to consider the study of $\updelta^1$, that we split in two cases. On the one hand, when the selection of $ \jmath $ corresponds to the perpendicular case, we have 
\[ \tilde M_k^{\jmath} = 2 \, , \qquad {r}_{k}^{\jmath,l'} \in \{ r_{\perp k}^1 ,r_{\perp k}^2 \} \, , \qquad {r}_{k}^{\jmath,l''} \in \{ r_{\perp k}^1 ,r_{\perp k}^2 \} \, . \]
Since the first two components of $r_{\perp k}^1$ and  $r_{\perp k}^2$ are zero, from  Lemma~\ref{lemQCPA}, we obtain $w=0$ so that $\updelta^1=0$.
On the other hand, when the selection of $ \jmath $ corresponds to the parallel case, we can assert that
\[ \tilde M_k^{\jmath} = 1 \, , \qquad {r}_{k}^{\jmath,l'}={r}_{k}^{\jmath,l''}=r_{\mypar k}^l \quad \text{or} \quad {r}_{k}^{\jmath,l'}={r}_{k}^{\jmath,l''}=r_{\mypar k}^r \, . \]
In both cases, again, $w=0$ so that $\updelta^1=0$. Finally, we have shown that $\cQ_3=0$.
\end{proof}


\subsection{Case 4}\label{apportcase4Q}
The part  $\cQ_4$ is associated with a set of resonances which is finite in number. This set comprises the set of resonances for which $\jmath \neq 0$, for which we state the following lemma.

\begin{lemma}[Transparency of the part of the quasilinear term involving a finite number of resonances]\label{transpa4Q}
  We have $\cQ_4=0$.
\end{lemma}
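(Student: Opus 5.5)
The plan is to exploit the fact that, in \eqref{eqnQL20}, the outer projection for Case 4 is onto an eigenvector $r_0^{\jmath,l}$ with $\jmath\neq0$, that is, onto $\tilde\Pi^1_0$. By \eqref{cestlasuite}, these vectors have \emph{zero density components} and \emph{zero magnetic component}; only the velocity and electric components survive. Thus the only entries of $A_p(\cdot)\,\cdot$ that can contribute are the third and fourth (velocity) blocks of \eqref{Aquasicommut}, since the electric block of $A_p(\cU)\tilde\cU$ vanishes.

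\textbf{Step 1: reduction to a single scalar.} Test \eqref{Aquasicommut} against $r_0^{1,l}$ with $l\in\{1,2,3\}$. This gives the quantity
\[
\frac{{\rm i}}{\sqrt2\,\underline b}\Bigl(\underline b_e\,\frac{a_e'(0)\varrho_e\tilde\varrho_e+u_e^p\tilde u_e^p}{\sqrt{\underline n m_e}}-\underline b_i\,\frac{a_i'(0)\varrho_i\tilde\varrho_i+u_i^p\tilde u_i^p}{\sqrt{\underline n m_i}}\Bigr)\,\delta_{lp},
\]
up to conjugation conventions. The key point is that this expression is \emph{symmetric} in $(\cU,\tilde\cU)$, by the remark in Lemma~\ref{lemQCPA} that the velocity components of $A_p(\cU)\tilde\cU$ and $A_p(\tilde\cU)\cU$ coincide.

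\textbf{Step 2: antisymmetrization in $k$.} For each resonant triple, I pair the term indexed by $(k,\sigma',\sigma'',l',l'')$ with the term indexed by $(-k,\sigma'',\sigma',l'',l')$. I first need to check that $R^{\jmath,\jmath',\jmath''}_{\sigma',\sigma''}$ is invariant under $k\mapsto -k$ (eigenvalues depend only on $|k|$) and under the swap $(\jmath',\sigma')\leftrightarrow(\jmath'',\sigma'')$. This invariance was observed in Paragraph~\ref{jpournot0sp} and holds in general by the form of \eqref{defredresonances}. Under this pairing, the coefficient products $\cI(\langle r,\cU_{\sigma'k}\rangle)\,\cI(\langle r,\cU_{-\sigma''k}\rangle)$ are exchanged, while the factor $k_p$ changes sign. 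Exactly as in the proofs of Lemmas~\ref{transpa0Q} and \ref{transpa3Q}, the sum then reduces to terms of the form
\[
k_p\,\big\langle r_0^{1,l},\,A_p(X)Y-A_p(Y)X\big\rangle,
\]
where $X,Y$ range over the relevant eigenvectors (possibly conjugated).

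\textbf{Step 3: vanishing.} By Step 1 only the velocity components of $A_p(X)Y-A_p(Y)X$ are tested, and these cancel by Lemma~\ref{lemQCPA} regardless of the density components. Hence every paired contribution vanishes, and $\cQ_4=0$. The $k=0$ contributions have already been excluded by the factor $k_p$, as in Lemma~\ref{transpa2Q}.

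\textbf{Main obstacle.} The delicate step is the bookkeeping of Step 2. I must verify that the conjugations $\cI_{\sigma\sigma'}$, $\cI_{\sigma\sigma''}$ and the weights $\tilde\delta_k^{\jmath'}\tilde\delta_k^{\jmath''}$ transform compatibly under the simultaneous change $k\to-k$, $(\sigma',\jmath',l')\leftrightarrow(\sigma'',\jmath'',l'')$. The eigenvector parity relations \eqref{impaexp}, \eqref{cK0knot0} and \eqref{rparallellk} may introduce signs. I would handle this by first writing each term as $\Re$ of a product with the conjugations absorbed into $\cU_{-k}=\bar\cU_k$, so that the pairing becomes the plain symmetry of a bilinear form multiplied by an odd factor $k_p$.
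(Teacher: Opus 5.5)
Your proposal is correct and follows essentially the paper's argument: antisymmetrize under $k\mapsto -k$ combined with the exchange of the primed and double-primed indices, then conclude from Lemma~\ref{lemQCPA} together with the vanishing density (and electric/magnetic block) structure tested against $r_0^{1,l}$. The paper organizes the same pairing case by case: $\cQ_{41}$ for $\sigma''=-\sigma'$, $\jmath'\neq\jmath''$, and $\cQ_{421}$, $\cQ_{422}$ for the $(+,+)$ resonances after identifying the left parallel mode. Your single involution $(k,\sigma',\jmath',l')\leftrightarrow(-k,\sigma'',\jmath'',l'')$ instead treats all resonance sets at once, with no classification needed, and the bookkeeping you worry about is harmless because this pairing literally swaps the two arguments together with their conjugations and coefficients, so no parity relations such as \eqref{impaexp} are required.
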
  

\begin{proof}
  As seen in Paragraph~\ref{jpournot0fen}, we have to consider the two cases, $ \mathring{R}_{\sigma',-\sigma'}^{\jmath, \jmath', \jmath''} := R_{\sigma',-\sigma'}^{\jmath, \jmath', \jmath''}\setminus \{0 \}$ with $\jmath'\neq \jmath''$, and $\mathring{R}_{+,+}^{\jmath, \jmath', \jmath''} := R_{+,+}^{\jmath, \jmath', \jmath''}\setminus\{0 \}$. We start by the contribution $\cQ_{41}$ corresponding to $\mathring{R}_{\sigma',-\sigma'}^{\jmath, \jmath', \jmath'}$ with $\jmath'\neq \jmath''$. We then treat the contribution $\cQ_{42}$ which is associated with $\mathring{R}_{+,+}^{\jmath, \jmath', \jmath''}$.\\

\noindent {$\bullet$ Resonances issued from $\mathring{R}_{\sigma',-\sigma'}^{\jmath, \jmath', \jmath''}$ with $\jmath'\neq \jmath''$}.
Expanding the right-hand side of \eqref{eqnQL20} with $\sigma'=\pm$, we obtain
\begin{align}
  \label{eqncQ4-1}   
   \cQ_{41} = &\ \frac 13 \sum_{\sigma \in \pm} \ \sum_{ \jmath'\neq \jmath'' = 0}^3 \ \sum_{\jmath=1}^3  \ \Big\{ \nonumber \\
  &\ \sum_{\underset{\mathring{R}_{+,-}^{\jmath, \jmath', \jmath''}}{k\in}}\  \sum_{p=1}^3 \
  \sum_{l=1}^{\tilde M_0^\jmath} \sum_{l'=1}^{\tilde M_k^{\jmath'}}\sum_{l''=1}^{\tilde M_k^{\jmath''}} 
  \tilde \delta_k^{\jmath'} \, \tilde \delta_k^{\jmath''} \, {\rm i} \, \sigma \, k_p \ \cI_{\sigma}\big(
   r_{0}^{\jmath,l} \langle r_{ k}^{\jmath',l'},\cU_{ k}\rangle \, \langle \bar{r}_{ k}^{\jmath'',l''},\bar{\cU}_{k}\rangle \,
  \langle r_{0}^{\jmath,l}, A_p(r_{k}^{\jmath',l'}) \bar{r}_{k}^{\jmath'',l''}\rangle \big) \nonumber  \\
   &\  +  \sum_{\underset{\mathring{R}_{-,+}^{\jmath, \jmath', \jmath''}}{k\in}}\  \sum_{p=1}^3 \
  \sum_{l=1}^{\tilde M_0^\jmath} \sum_{l'=1}^{\tilde M_k^{\jmath'}}\sum_{l''=1}^{\tilde M_k^{\jmath''}} 
  \tilde \delta_k^{\jmath'} \tilde \delta_k^{\jmath''}\, {\rm i} \, \sigma \, k_p \ \cI_{\sigma}\big(
    r_{0}^{\jmath,l} \langle \bar{r}_{-k}^{\jmath',l'},\bar{\cU}_{-k}\rangle \, \langle {r}_{-k}^{\jmath'',l''},{\cU}_{-k}\rangle \, 
  \langle r_{0}^{\jmath,l}, A_p(\bar{r}_{-k}^{\jmath',l'}) {r}_{-k}^{\jmath'',l''}\rangle \big)
   \Big\}\,.\nonumber
\end{align}
We observe that $\mathring{R}_{+,-}^{\jmath, \jmath', \jmath''}=\mathring{R}_{-,+}^{\jmath, \jmath'', \jmath'}$. Thus, by changing $ (\jmath',l') $ into $ (\jmath'',l'') $ in the second member of the previous expression, after factorization, there remains 
\begin{align}
   \cQ_{41} = &\ \frac13 \ \sum_{\sigma \in \pm} \ \sum_{ \jmath'\neq \jmath'' = 0}^3 \ \sum_{\jmath=1}^3  \
  \sum_{k \in \mathring{R}_{+,-}^{\jmath, \jmath', \jmath''}}\  \sum_{p=1}^3 \
  \sum_{l=1}^{\tilde M_0^\jmath} \sum_{l'=1}^{\tilde M_k^{\jmath'}}\sum_{l''=1}^{\tilde M_k^{\jmath''}} 
   \tilde \delta_k^{\jmath'} \tilde \delta_k^{\jmath''}\, {\rm i} \, \sigma \, k_p \ \cI_{\sigma}\big (  r_{0}^{\jmath,l} \big \{ \nonumber\\
   &\ \langle r_{ k}^{\jmath',l'},\cU_{ k}\rangle \langle \bar{r}_{ k}^{\jmath'',l''},\bar{\cU}_{k}\rangle
   \langle r_{0}^{\jmath,l}, A_p(r_{k}^{\jmath',l'}) \bar{r}_{k}^{\jmath'',l''}\rangle
   +  \langle \bar{r}_{-k}^{\jmath'',l''},\bar{\cU}_{-k}\rangle \langle {r}_{-k}^{\jmath',l'},{\cU}_{-k}\rangle
  \langle r_{0}^{\jmath,l}, A_p(\bar{r}_{-k}^{\jmath'',l''}) {r}_{-k}^{\jmath',l'}\rangle   \big\} \big)\,. \nonumber
\end{align}
Making the change of variables $k $ into $ -k$ in second multiple sum, we obtain
\begin{align}
   \cQ_{41} = &\ \frac13 \, \sum_{\sigma \in \pm} \  \sum_{ \jmath'\neq \jmath'' = 0}^3 \ \sum_{\jmath=1}^3  \ 
  \sum_{k \in \mathring{R}_{+,-}^{\jmath, \jmath', \jmath''}}\  \sum_{p=1}^3 \
  \sum_{l=1}^{\tilde M_0^\jmath} \sum_{l'=1}^{\tilde M_k^{\jmath'}}\sum_{l''=1}^{\tilde M_k^{\jmath''}} 
  \tilde \delta_k^{\jmath'} \, \tilde \delta_k^{\jmath''}\, {\rm i} \, \sigma \, k_p   \nonumber\\
  &\ \cI_{\sigma}\big (  r_{0}^{\jmath,l} \,
  \langle r_{ k}^{\jmath',l'},\cU_{ k}\rangle \, \langle \bar{r}_{ k}^{\jmath'',l''},\bar{\cU}_{k}\rangle \, 
  \langle r_{0}^{\jmath,l}, A_p(r_{k}^{\jmath',l'}) \, \bar{r}_{k}^{\jmath'',l''}
  - A_p(\bar{r}_{k}^{\jmath'',l''}) \, {r}_{k}^{\jmath',l'}\rangle 
  \big)\,. \nonumber
\end{align}
Since the first two components of $ r_0^{\jmath,l}  \equiv r_0^{1,l} $ are zero for all $ (\jmath,l) \in \{1,2,3 \}^2 $, and because the first eight components of $ r_{k}^{\jmath',l'} $ and $ r_{k}^{\jmath'',l''} $ are purely real,  using Lemma~\ref{lemQCPA},
we obtain $ \cQ_{41} = 0 $.
\\
\\
\noindent {$\bullet$ Resonances issued from $\mathring{R}_{+,+}^{\jmath, \jmath', \jmath''}$}. We separate the case $\jmath'\neq\jmath''\neq 0$ yielding $ \cQ_{421} $ from the combination of the situations 
 $(\jmath'= 0,\jmath''\neq 0)$ and $(\jmath'\neq 0,\jmath''= 0)$ leading to 
$\cQ_{422}$ . Obviously we have $\cQ_{42}=\cQ_{421}+\cQ_{422}$. Let us start with 
$ \cQ_{421} $. On the one hand, recall that 
\[ \underline{b} < \lambda_{\perp k} = \sqrt{\underline{b}^2 + \vert k \vert^2} \, , \qquad \forall k \not = 0 \, . \] 
On the other hand, since 
\[ \underline{b}^2 < \Lambda^l + \Lambda^r = \underline{b}^2 + (\underline{a}_e^2 + \underline{a}_i^2) \ \vert k \vert^2 \, , \qquad \forall k \not = 0 \, ,\]
we can assert that 
\[ \underline{b} < \lambda^l_{\mypar k} + \lambda^r_{\mypar k} 
< 2 \, \lambda^r_{\mypar k} \, , \qquad \forall k \not = 0 \, . \]
Looking inside \eqref{Case44} at the resonance conditions defining $\mathring{R}_{+,+}^{\jmath, \jmath', \jmath''}$, we must take $\jmath'=\jmath''=\jmath_*$ where $\jmath_*$ is such that $\lambda_{k}^{\jmath_*}=\lambda_{\mypar k}^l$. Therefore $\cQ_{421} $ is built with
\begin{equation*}
  \label{eqncQ4-4}   
   \cQ_{421} = \frac 13 \sum_{\sigma \in \pm} \ \sum_{\jmath\neq0} \, \sum_{k \in \mathring{R}_{+,+}^{\jmath, \jmath_*, \jmath_*}} \ \sum_{p=1}^3 \
  \sum_{\ell=1}^{\tilde M_0^\jmath} \, {\rm i} \, \sigma \, k_p \ \cI_{\sigma}\big(
   r_{0}^{\jmath,\ell} \langle r_{ \mypar k}^{l},\cU_{ k}\rangle \, \langle {r}_{ \mypar (-k)}^{l},{\cU}_{-k}\rangle \, 
   \langle r_{0}^{\jmath,\ell}, A_p(r_{\mypar k}^{l}) \, {r}_{\mypar (-k)}^{l}\rangle \big)\,.
\end{equation*}
Symmetrizing this expression in $k$, we obtain
\begin{multline*}
  \label{eqncQ4-5}   
   \cQ_{421} = \frac 16 \sum_{\sigma \in \pm} \ \sum_{\jmath\neq0} \, \sum_{k \in \mathring{R}_{+,+}^{\jmath, \jmath_*, \jmath_*}}\ \sum_{p=1}^3 \
  \sum_{\ell=1}^{\tilde M_0^\jmath} \, {\rm i} \, \sigma \, k_p \, \\ 
  \cI_{\sigma}\big(
   r_{0}^{\jmath,\ell} \langle r_{ \mypar k}^{l},\cU_{ k}\rangle \, \langle {r}_{ \mypar (-k)}^{l},{\cU}_{-k}\rangle \, 
   \langle r_{0}^{\jmath,\ell}, A_p(r_{\mypar k}^{l}) \, {r}_{\mypar (-k)}^{l}- A_p(r_{\mypar (-k)}^{l}) \, {r}_{\mypar k}^{l}\rangle \big)\,.
\end{multline*}
Since the first two components of $ r_0^{\jmath,l}  \equiv r_0^{1,l} $ are zero for all $ (\jmath,l) \in \{1,2,3 \}^2 $, applying Lemma~\ref{lemQCPA}, 
we find $\cQ_{421}=0$.

\smallskip 

We now consider what happens when $(\jmath'= 0,\jmath''\neq 0)$ and $(\jmath'\neq 0,\jmath''= 0)$. Since $  \lambda_{\mypar k}^r > \underline{b} $, and $ \lambda_{\perp k} > \underline{b} $, from the resonance conditions of  $\mathring{R}_{+,+}^{\jmath, \jmath', 0}$, we infer that $\jmath'=\jmath_*$ where $\jmath_*$ is such that $\lambda_{k}^{\jmath_*}=\lambda_{\mypar k}^l$. Similarly, from the resonance conditions of  $\mathring{R}_{+,+}^{\jmath, 0, \jmath''}$, we infer that $\jmath''=\jmath_*$ where $\jmath_*$ is such that $\lambda_{k}^{\jmath_*}=\lambda_{\mypar k}^l$. Observe that  $\mathring{R}_{+,+}^{\jmath, \jmath_*, 0}=\mathring{R}_{+,+}^{\jmath, 0,\jmath_*}$. Therefore, the sum of the cases $(\jmath'= 0,\jmath''\neq 0)$, and $(\jmath'\neq 0,\jmath''= 0)$ can be recast as
\begin{align}
   \cQ_{422} = &\ \frac16 \sum_{\sigma \in \pm} \  \sum_{  \jmath\neq0} \
  \sum_{k \in \mathring{R}_{+,+}^{\jmath, \jmath_*, 0}} \  \sum_{p=1}^3 \
  \sum_{\ell=1}^{\tilde M_0^\jmath} \sum_{\ell'=1}^{\tilde M_k^{0}}
   \, {\rm i} \, \sigma \, k_p \ \cI_{\sigma}\big (  r_{0}^{\jmath,\ell} \big \{ \nonumber\\
   &\ \langle r_{\mypar k}^{l},\cU_{ k}\rangle \, \langle {r}_{ -k}^{0,\ell'},{\cU}_{-k}\rangle \, 
   \langle r_{0}^{\jmath,\ell}, A_p(r_{\mypar k}^{\ell}) \, {r}_{-k}^{0,\ell'}\rangle
   +  \langle {r}_{k}^{0,\ell'},{\cU}_{k}\rangle \, \langle {r}_{\mypar (-k)}^{l},{\cU}_{-k}\rangle \, 
  \langle r_{0}^{\jmath,\ell}, A_p({r}_{k}^{0,\ell'}) \, {r}_{\mypar (-k)}^{l}\rangle   \big\} \big)\,. \nonumber
\end{align}
Making the change of $k $ into $ -k$ in the  second multiple sum of the previous equation, we obtain
\begin{align}
  \cQ_{422} = &\ \frac16 \sum_{\sigma \in \pm} \  \sum_{  \jmath\neq0} \
  \sum_{k \in \mathring{R}_{+,+}^{\jmath, \jmath_*, 0}} \  \sum_{p=1}^3 \
  \sum_{\ell=1}^{\tilde M_0^\jmath} \sum_{\ell'=1}^{\tilde M_k^{0}}
   \, {\rm i} \, \sigma \, k_p \  \nonumber\\
  &\ \cI_{\sigma}\big (  r_{0}^{\jmath,\ell}
   \langle r_{\mypar k}^{l},\cU_{ k}\rangle \, \langle {r}_{ -k}^{0,\ell'},{\cU}_{-k}\rangle \,
  \langle r_{0}^{\jmath,\ell}, A_p(r_{\mypar k}^{l}) \, {r}_{-k}^{0,\ell'}-  A_p({r}_{-k}^{0,\ell'}) \, {r}_{\mypar k}^{l}\rangle
  \big)\,. \nonumber
\end{align}
Since the first two components of $r_0^{\jmath,\ell}$ are zero for all $\jmath\in\{1, \ldots,3 \}$ and for all $ \ell\in \{1,\ldots,\tilde M_0^\jmath\}$, using Lemma~\ref{lemQCPA}, we obtain $\cQ_{422}=0$, which concludes the proof.
\end{proof}


\end{document}